\definecolor{my-blue}{rgb}{0.0,0.0,0.6}
\definecolor{my-red}{rgb}{0.5,0.0,0.0}
\definecolor{my-green}{rgb}{0.0,0.5,0.0}
\definecolor{nicos-red}{rgb}{0.75,0.0,0.0}
\definecolor{nicos-green}{rgb}{0.0,0.75,0.0}
\definecolor{light-gray}{gray}{0.6}
\definecolor{really-light-gray}{gray}{0.8}
\definecolor{sussexg}{rgb}{0.0,0.5,0.5}
\definecolor{sussexp}{rgb}{0.5,0.0,0.5}
\newtheorem{theorem}{\color{my-blue}{\sc Theorem}}[section]
\newtheorem{lemma}[theorem]{\color{my-blue} \sc Lemma}
\newtheorem{proposition}[theorem]{\color{my-blue} \sc Proposition}
\newtheorem{corollary}[theorem]{\color{my-blue} \sc Corollary}
\newtheorem{conjecture}[theorem]{\color{my-blue} \sc Conjecture}
\numberwithin{equation}{section}
\theoremstyle{remark}
\newtheorem{remark}[theorem]{\color{my-blue} Remark}
\newcommand{\be}{\begin{equation}}
\newcommand{\ee}{\end{equation}}
\providecommand{\abs}[1]{\vert#1\vert}
\newcommand{\TV}[1]{{\lVert #1 \rVert}_{\normalfont
\text{TV}}}
\newcommand{\tmix}{t_{\textup{mix}}}
\newcommand{\tmixb}{t^{\prime}_{\textup{mix}}}
\newcommand{\sk}{S_k}
\newcommand{\eqpd}{\, .}
\newcommand{\eqcom}{\, ,}
\def\mix{\textup{mix}}
\def\bE{\mathbb{E}}
\def\bN{\mathbb{N}}
\def\bP{\mathbb{P}}
\def\bR{\mathbb{R}}
\def\e{\varepsilon}
\def\c{\textup{c}} 
\def\R{\bR}
\def\N{\bN}
\def\P{\bP}
\DeclareMathOperator{\Var}{Var}
\def\E{\bE}
\def\P{\bP} 
\definecolor{partcolor1}{rgb}{0.0,0.5,0.0}
\definecolor{partcolor2}{rgb}{0.0,0.5,0.0}
\definecolor{darkgreen}{rgb}{0.0,0.5,0.0}
\definecolor{darkblue}{rgb}{0.5,0.1,0.5}
\definecolor{deepblue}{rgb}{0.25,0.41,0.88}
\definecolor{nicosred}{rgb}{0.65,0.1,0.1}
\definecolor{light-gray}{gray}{0.7}
\begin{document}
\usdate
\title[$\sk$ Shuffle Block Dynamics]
{The $\mathbf{\sk}$ Shuffle Block Dynamics}
\author{Evita Nestoridi}
\address{Evita Nestoridi, Princeton University, Stony Brook University, United States}
\email{evrydiki.nestoridi@stonybrook.edu} 

\author{Amanda Priestley}
\address{Amanda Priestley, The University of Texas at Austin, United States}
\email{amandapriestley@utexas.edu} 

\author{Dominik Schmid}
\address{Dominik Schmid, University of Bonn, Germany}
\email{d.schmid@uni-bonn.de}

\keywords{mixing times, block dynamics, card shuffling, cutoff phenomenon}
\subjclass[2020]{Primary: 60K35; Secondary: 60K37, 60J27}
\date{\today}
\begin{abstract}
We introduce and analyze the $S_k$ shuffle on $N$ cards, a natural generalization of the celebrated random adjacent transposition shuffle. In the $S_k$ shuffle, we choose uniformly at random a block of $k$ consecutive cards, and shuffle these cards according to a permutation chosen uniformly at random from the symmetric group on $k$ elements. We study the total-variation mixing time of the $S_k$ shuffle when the number of cards $N$ goes to infinity, allowing also $k=k(N)$ to grow with $N$. In particular, we show that the cutoff phenomenon occurs when $k=o(N^{\frac{1}{6}})$.
\end{abstract}
\maketitle
\vspace*{-0.6cm}
\section{Introduction} \label{sec:Introduction}

\subsection{Model and results}
When shuffling a deck of $N$ cards, our experience suggests that shuffles that involve only ``local'' moves, i.e.\ moves that significantly affect only a small number of cards, mix slower than shuffles that involve non-local moves. Random transpositions \cite{DSHA}, star transpositions \cite{EvitaStarTranspositions}, random-to-random \cite{BN} are examples of such local card shuffles that are known to shuffle a deck of $N$ cards in order $N \log N $ steps. Random adjacent transpositions are even slower, mixing in order $N^3 \log N$ steps \cite{Lacoin_2016,wilson2004mixing}. In contrast, the riffle shuffle and $k$-cycles for sufficiently large values of $k$  mix in only order $\log N$ steps~\cite{BD, BSZ}. In this paper, we introduce the $\sk$ shuffle, a model that interpolates between  local and global moves for the shuffles, i.e.\ it corresponds to adjacent random transpositions when $k=2$, and the shuffle which picks a uniform permutation in every step when $k=N$. \\

Our definition of the $\sk$ shuffle is inspired by block dynamics of other well-studied models, such as the Ising model \cite{CaputoSpectralIndependence,GuoJerrumBlockIsing,KnopfelBlockIsing, MartinelliIsingTrees,MartinelliBlockIsing, seoyeonblockising}, and other non-local dynamics such as the Swendsen-Wang model \cite{CaputoSpectralIndependence,BlancaSwendsenTrees,longswendsen-wang}, the random cluster model \cite{ganguly2020information}, and the Bernoulli-Laplace model with multiple swaps \cite{AlamedaBernouliLaplaceMultiSwap,Eskenazis_Nestoridi_2020}. In the $S_k$ shuffle,  each block of $k$ consecutive cards is assigned an independent rate $1$ Poisson clock. Whenever a clock rings, we shuffle the cards in the respective block according to a permutation chosen uniformly at random from the symmetric group on $k$ elements.
The main objective in this paper is the total-variation mixing time, $t_{\mix}(\varepsilon)$, for the $\sk$ shuffle on $N$ cards when $N$ goes to infinity; see Section~\ref{sec:prelim} for a formal definition of the respective quantities. We have the following first result.
\begin{theorem}\label{thm:Precutoff} For the $\sk$ shuffle, there exists a constant $c>0$ such that for all $k=k(N)$ with $k = o(N^{2/3})$ and all $\varepsilon \in (0,1)$,
\begin{equation}\label{eq:PrecutoffSk}
\frac{6}{\pi^2} \leq \liminf\limits_{N \to \infty}\frac{ k(k^2-1) \cdot \tmix(\varepsilon)}{N^2\log N}  \leq \limsup\limits_{N \to \infty}\frac{ k(k^2-1) \cdot \tmix(\varepsilon)}{N^2\log N} \leq c \eqpd
 \end{equation}
\end{theorem}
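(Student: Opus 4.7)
My plan proves the two inequalities of \eqref{eq:PrecutoffSk} separately. The explicit constant $6/\pi^{2}$ on the lower side is the signature of Wilson's second-moment method applied to an approximate left eigenfunction of the $\sk$ generator; the upper bound, required only up to an unspecified constant, I would approach by a Dirichlet-form comparison against the random adjacent-transposition shuffle, whose sharp $N^{3}\log N$ asymptotics are known from \cite{Lacoin_2016}.

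For the lower bound I take the test statistic
\[
\Phi(\sigma)\;:=\;\sum_{j=1}^{N}\phi(j)\,\sigma^{-1}(j),\qquad \phi(j):=\cos\!\Big(\tfrac{\pi(2j-1)}{2N}\Big),
\]
the lowest Neumann cosine on $\{1,\dots,N\}$ paired with the card-at-position observable $v_{j}=\sigma^{-1}(j)$. A direct computation of the $\sk$ generator $\cL$ shows that an update of block $[a,a+k-1]$ replaces each $v_{j}$ in the block by the block mean in expectation, so after swapping the double sum over blocks and positions the coefficient of $v_{j}$ in $\cL\Phi$ equals $(w_{k}*\phi)(j)-k\phi(j)$ in the bulk, with $w_{k}(\ell)=(1-|\ell|/k)\mathbf{1}_{|\ell|<k}$ the triangular Fej\'er weight. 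Its Fourier symbol is the Fej\'er kernel $K_{k}(\theta)=k^{-1}\bigl(\sin(k\theta/2)/\sin(\theta/2)\bigr)^{2}$, and the Taylor expansion $K_{k}(\theta)-k=-\tfrac{k(k^{2}-1)}{12}\theta^{2}(1+O(k^{2}\theta^{2}))$ at the lowest mode $\theta=\pi/N$ yields the decay rate $\gamma=\tfrac{\pi^{2}k(k^{2}-1)}{12\,N^{2}}(1+o(1))$ under the assumption $k=o(N^{2/3})$. The Neumann choice of $\phi$ is what allows the boundary blocks at $a\leq k-1$ and $a\geq N-2k+2$ to be reinstated by even reflection, keeping the spectral identity accurate to relative order $O(k/N)$. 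From $\sigma_{0}=\mathrm{id}$ one computes $|\Phi(\mathrm{id})|=\Theta(N^{2})$, $\Var_{\pi}(\Phi)=\Theta(N^{3})$ and one-step variance rate $\sum_{a}\Var_{\pi}(\Delta_{a})=O(N\,k^{3})$; feeding these into Wilson's lemma \cite{wilson2004mixing} produces
\[
\tmix(\varepsilon)\;\geq\;(1+o(1))\,\frac{\log N}{2\gamma}\;=\;(1+o(1))\,\frac{6\,N^{2}\log N}{\pi^{2}\,k(k^{2}-1)},
\]
the left-hand inequality of \eqref{eq:PrecutoffSk}.

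For the upper bound I would run a Dirichlet-form / canonical-paths comparison with the random adjacent-transposition shuffle. Writing a uniform permutation of a $k$-block as a product of at most $\binom{k}{2}$ adjacent transpositions via the reduced-word representation of $\kS_{k}$ transfers the adjacent-transposition gap estimate to the bound $\gamma_{\sk}\gtrsim k^{3}/N^{2}$ matching the Fej\'er-kernel heuristic, and a parallel comparison of (modified) log-Sobolev constants in the style of the block-dynamics references cited in the introduction (e.g.\ \cite{CaputoSpectralIndependence}) upgrades this to the announced mixing-time bound. The main obstacle is precisely this entropic comparison: a naive $L^{2}$ / Poincar\'e argument alone only yields $\log|\kS_{N}|/\gamma_{\sk}\asymp N^{3}\log N/k^{3}$, an extra factor $N$ too large, and removing this factor---either via a modified log-Sobolev comparison or by adapting Lacoin's height-function coupling to the block setting---is where I expect the bulk of the technical effort to lie.
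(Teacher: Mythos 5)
Your lower-bound plan is in the right spirit (a Wilson-type second moment with an approximate eigenfunction whose eigenvalue is $\tfrac{\pi^2 k(k^2-1)}{12N^2}$, the same constant the paper extracts), and your bulk Fej\'er-kernel computation is correct, but the boundary claim is not. The $\sk$ shuffle without boundaries does not act near position $1$ like an evenly reflected bulk walk: it simply omits the blocks that would protrude, so a card at position $j<k$ is refreshed at rate $j$ rather than $k$ and with a strong inward drift. Concretely, with your Neumann cosine the coefficient of $v_1$ in $\cL\Phi$ is $\approx -\tfrac{\pi^2(k^2-1)}{6N^2}$ while $-\lambda\phi(1)\approx -\tfrac{\pi^2 k(k^2-1)}{12N^2}$, so the relative error at the $O(k)$ boundary sites is of order one, not $O(k/N)$. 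Since your observable $v_j=\sigma^{-1}(j)$ can be of size $N$ at those sites, the total drift error is $\sup_\sigma|\cL\Phi+\lambda\Phi|\asymp k\cdot\lambda\cdot N\asymp k^4/N$, i.e.\ $c/\lambda\asymp kN$, which swamps the signal $e^{-\lambda t}\|\Phi\|_\infty\asymp\sqrt{R}\asymp N^{3/2}$ at the relevant time as soon as $k\gtrsim N^{1/2}$; your argument therefore cannot reach $k=o(N^{2/3})$ as stated. A second gap: Wilson's variance recursion needs a one-step variance rate valid along the trajectory, and the worst-case rate for your $\Phi$ is $\asymp Nk^4$ (labels in a block adversarially split), not the stationary $Nk^3$; for $k=N^{\delta}$ this alone shifts the constant. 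The paper circumvents both issues by using the height-function statistic, which is bounded by $\min(x,N-x)\le k$ at the boundary (so the analogous error is only $O(k^6N^{-3})$, Lemma 2.1), by tuning the boundary rates $\delta^{(k)}_i$ so that the sine functions are genuine approximate eigenfunctions of the with-boundary chain, by transferring the bound to the chain without boundaries through monotonicity and a generalized censoring inequality (Lemma 3.2, Remark 3.3), and by getting the all-time variance bound $O(N^3)$ from the strong Rayleigh/negative-dependence property (Corollaries 3.6--3.7) rather than from one-step variance production.

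For the upper bound you do not have a proof: as you yourself note, the Poincar\'e/canonical-paths comparison only yields $N^3\log N/k^3$, and the proposed repair via a modified log-Sobolev comparison with adjacent transpositions is precisely the missing step --- the required (modified) log-Sobolev constants for adjacent transpositions at this precision are not available in the cited literature, so the ``bulk of the technical effort'' you defer is the entire content of the inequality. The paper's route is different and more elementary (Section 5): a canonical coupling in which matched cards stay matched, optional-stopping and quadratic-variation estimates showing that the two copies of a given card coalesce within time $O(N^2k^{-3})$ with positive probability (together with exit-time estimates near the boundary), and then iteration plus a union bound over the $N$ cards, giving $t_{\textup{mix}}\le CN^2k^{-3}\log N$ for $k=o(N^{2/3})$ with no functional-inequality comparison at all. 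If you want to salvage your outline, the coupling argument is the part to adopt; the spectral comparison only recovers the correct order of the gap, not of the mixing time.
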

A crucial observation is that the $\sk$ shuffle treats cards differently depending on their positions in the deck.  One might notice that cards in the first and last $k$ positions of the deck move more slowly than those in the middle. In particular, following the first card of the deck, one sees that the mixing time must be at least of constant order; see Proposition~\ref{pro:Comparison} for a precise statement. To redeem this effect, we introduce extra moves in positions $1$ through $k-1$, as well as positions $N-k+1$ through $N$, and we refer to the first and last $k$ positions as \textbf{boundary}. More precisely, for each $i \in \{2,\dots,k-1\}$ we assign a rate $\delta_i^{(k)}$, respectively a rate $\delta_{N-i+1}^{(k)}$, Poisson clock to the blocks containing the first $i$, respectively the last $i$ cards of the deck. Whenever a clock rings, we shuffle the cards in
the respective block according to a permutation chosen uniformly at random from the symmetric group on $i$ elements. We refer to this as the $\mathbf{\sk}$ \textbf{shuffle with boundaries}. The exact choice of the values $\delta_i^{(k)}$ is deferred to Section~\ref{sec:prelim}, where we formally introduce both processes; see also Section \ref{sec:Comparison} for a comparison of the two processes. While the $\varepsilon$-mixing time $\tmixb(\e)$ of the $\sk$ shuffle with boundaries still satisfies the bounds in Theorem~\ref{thm:Precutoff}, the upper bound can be improved for sufficiently slow growing $k$; see also Conjecture \ref{conj:Open} when $k=o(N^{1/2})$. 

\begin{theorem}\label{thm:cutoff} For the $\sk$ shuffle with boundaries, assuming $k=o(N^{1/6})$, the $\e$-mixing time of the $\sk$-shuffle with boundaries satisfies for all $\varepsilon \in (0,1)$
\be
\lim\limits_{N \to \infty}\frac{k(k^2-1) \cdot\tmixb(\e)}{N^2\log N} =\frac{6}{\pi^2}  \eqpd
\ee
\end{theorem}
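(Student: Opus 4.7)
The lower bound $\liminf k(k^2-1)\,\tmixb(\varepsilon)/(N^2\log N)\ge 6/\pi^2$ is inherited from Theorem~\ref{thm:Precutoff}, since (as noted in the paragraph immediately preceding the statement) the precutoff bounds apply to the $\sk$ shuffle with boundaries as well; the boundary modifications add rate $\delta_i^{(k)}$ only at $O(k)$ positions and do not disturb the leading order. The task is therefore to establish the matching upper bound
$$\limsup_{N\to\infty}\frac{k(k^2-1)\cdot\tmixb(\varepsilon)}{N^2\log N}\le \frac{6}{\pi^2}.$$

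The plan is to adapt the Lacoin--Wilson strategy from the adjacent random transposition shuffle ($k=2$, \cite{Lacoin_2016,wilson2004mixing}) to block dynamics. For each $M\in\{1,\ldots,N-1\}$, I consider the projection
$$\eta^M_t(i):=\mathbf{1}\{\text{the card at position }i\text{ at time }t\text{ has label }\le M\}.$$
This process is Markovian in its own filtration, because a uniform random permutation of a block of $k$ cards induces a uniform rearrangement of the zeros and ones in the block conditional on their count. Hence $(\eta^M_t)_{t\ge 0}$ is a symmetric block exclusion process on $\{1,\ldots,N\}$ in which each window of $k$ consecutive sites, at rate one, has its particles uniformly reshuffled. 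The boundary rates $\delta_i^{(k)}$ from Section~\ref{sec:prelim} are precisely those making the same dynamics occur inside the smaller boundary blocks, producing clean Dirichlet-type boundary conditions for the induced block-discretized Laplacian.

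The sharp upper bound is then obtained in two steps. First, I diagonalize the one-particle generator of the block exclusion process. With bulk rates $r_{ij}=(k-|i-j|)/k$ for $0<|i-j|<k$, the function $\phi_m(x)=\sin(m\pi x/N)$ is (up to boundary corrections) an eigenfunction with eigenvalue
$$\lambda_m = 2\sum_{d=1}^{k-1}\frac{k-d}{k}\bigl(1-\cos(m\pi d/N)\bigr) = \frac{m^2\pi^2 k(k^2-1)}{12N^2}\bigl(1+o(1)\bigr),$$
the leading coefficient coming from the identity $\sum_{d=1}^{k-1}(k-d)d^2=k^2(k^2-1)/12$. The spectral gap $\lambda_1=\pi^2 k(k^2-1)/(12N^2)(1+o(1))$ yields exactly the constant $6/\pi^2$ through the standard cutoff identity $\tmix\sim \log(N)/(2\lambda_1)$ for symmetric exclusion. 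The $L^2$-bound for each $\eta^M$ is lifted from the single-particle spectrum via the negative-correlation / interchange-process comparison, and a Hamming-distance (or height-function) coupling combined with a union bound over $M$ then reconstructs the total-variation distance of the full shuffle.

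The main obstacle is controlling lower-order corrections uniformly in $k$. The Taylor expansion of $1-\cos(m\pi d/N)$ gives an $O(k^5/N^4)$ error in $\lambda_1$; the true eigenfunctions deviate from pure sines by an amount polynomial in $k/N$, requiring one to show the boundary rates $\delta_i^{(k)}$ were indeed tuned correctly; and the union bound over the $N-1$ projections contributes an extra $\log N$ that must be absorbed. Balancing these three sources of error yields the restriction $k=o(N^{1/6})$. Pushing this down to $k=o(N^{1/2})$, as Conjecture~\ref{conj:Open} predicts, would require a substantially more delicate spectral analysis of the block-discretized Laplacian, together with a multi-species coupling that avoids the naive union bound.
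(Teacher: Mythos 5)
Your spectral set-up matches the paper's (the projection to a block exclusion process, the sine functions as approximate eigenfunctions, and the eigenvalue asymptotics via $\sum_{d=1}^{k-1}(k-d)d^2=k^2(k^2-1)/12$ are all correct and are exactly the quantities in \eqref{eq:eigenvaluebound}), but the heart of the upper bound is asserted rather than proved. There is no ``standard cutoff identity $\tmix\sim\log N/(2\lambda_1)$ for symmetric exclusion'': that relation is precisely the theorem to be established, and it fails for generic chains. Your proposed mechanism does not close this gap. Negative correlation (strong Rayleigh) yields variance bounds for linear statistics of the particle configuration -- which is what the paper uses for the \emph{lower} bound -- but it does not give an $L^2$ contraction estimate for the many-particle dynamics at the sharp rate; the naive $L^2$ bound through the spectral gap alone loses a factor of order the entropy of the starting state (roughly $\log\binom{N}{M}/\log N$), destroying the constant $6/\pi^2$. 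Likewise, ``a union bound over $M$'' cannot reconstruct the total-variation distance of the permutation chain from its projections: closeness of all marginals $\eta^M_t$ does not bound the distance of the joint law. This reconstruction is the genuinely delicate step, and in the paper it occupies Sections \ref{sec:generalupperbound} and \ref{sec:lacoinupperbound}: a generalized censoring inequality for $k\ge 3$ (Lemma \ref{lem:CensoringSk}, where censoring must \emph{alter} rather than merely suppress block moves), a three-stage censoring scheme with skeleton and semi-skeleton projections in the spirit of Lacoin, a diffusive coupling bound (Proposition \ref{prop:genupperbound}) to mix cards within the blocks of the skeleton by time $t_1$, and the approximate Fourier analysis of Lemma \ref{lem:upperbexpectedheight}, which controls $\E[h_t^{\prime}(x,y)]$ up to an additive error of order $k^3$; it is this $k^3$ error, measured against the $\sqrt{N}$ equilibrium fluctuations of the skeleton heights, that produces the hypothesis $k=o(N^{1/6})$ -- not a union bound over the $N-1$ projections, which plays no role.

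Your treatment of the lower bound also has the monotonicity backwards. The boundary rates $\delta_i^{(k)}$ only add randomizing moves, so a lower bound for the chain \emph{without} boundaries does not transfer to the faster chain \emph{with} boundaries ``because the leading order is undisturbed''; that is exactly the nontrivial point. The paper argues in the opposite direction: the lower bound is proved directly for the boundary chain by the approximate second moment method (Lemmas \ref{lem:gensecondmoment} and \ref{lem:wilsonlowerboundsk}, with the variance controlled via the strong Rayleigh property), using the fact that the specific choice \eqref{eq:deltaWeights} of the $\delta_i^{(k)}$ makes $\Phi_N^{(1)}$ an approximate eigenfunction up to an $O(k^6N^{-3})$ error; the lower bound for the chain without boundaries is then deduced from it via the censoring inequality. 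As written, both the lower and the upper bound of the theorem rest on steps that are named but not supplied, so the proposal does not constitute a proof.
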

The fact that the leading order of the mixing time does not depend on $\varepsilon$ is called the \textbf{cutoff phenomenon}. Theorem~\ref{thm:cutoff} agrees with the celebrated result of Lacoin in \cite{Lacoin_2016} where he proves the special case $k=2$; see earlier work by Wilson \cite{wilson2004mixing} for a sharp lower bound. While we follow for Theorem~\ref{thm:cutoff} the strategy introduced by Lacoin in \cite{Lacoin_2016}, one faces several challenges when adapting the arguments to the case $k\geq 3$.
Perhaps most surprisingly, neither the spectral gap nor the other eigenvalues of the transition matrix offer a simple closed form when $k>3$. Instead, under a suitable choice of the boundary rates, we utilize approximate eigenvalues and eigenfunctions. The idea of using approximate eigenfunctions first appeared in \cite{nam2019cutoff} when studying a time inhomogeneous version of the adjacent transposition shuffle. It was later adapted for continuous time Markov chains in \cite{gantert2020mixing} to get sharp lower bounds for the symmetric exclusion process with one open boundary. 
In contrast to the above mentioned works, we approximate all relevant eigenvalues and eigenfunctions of the $\sk$ shuffle with boundaries simultaneously. This allows us to perform approximate Fourier Analysis. It is a crucial ingredient in the proof of the upper bound in Theorem \ref{thm:cutoff} as it addresses a discrete heat equation whose solution can be given via the $\sk$ shuffle. 

When providing sharp lower bounds on the mixing time, another difficulty occurs for large $k$ as the maximal displacement by a shuffle within a block becomes comparable to the fluctuations of the $\sk$ shuffle in equilibrium. 
 We resolve this issue by relying on the strong Rayleigh property for the $\sk$ shuffle with boundaries similar to Salez \cite{salez2022} and Tran \cite{Tran2022} for the symmetric exclusion process with open boundaries. Moreover, we require a different generalized version of Wilson's Lemma, going back to the original second moment method as it was introduced in \cite[Section 4]{BL}, using stricter variance bounds.
Furthermore, the $\sk$ shuffle with boundaries process requires a different interpretation of censoring than the one used previously by Lacoin and Gantert et al.~ \cite{gantert2020mixing,Lacoin_2016}.  We introduce a generalized 
censoring scheme that it does not only restrict  moves, but also alters them. 
Let us conclude the introduction by mentioning that adjacent transpositions can also be studied for biased card shuffling methods; see for example \cite{BBHM:MixingBias,LingfuBiasedCardShuffling}.

\subsection{Structure of the paper}
In Section~\ref{sec:prelim}, we give preliminary definitions and notation which will be used throughout the paper. Section~\ref{sec:properties preserved} discusses important properties of the $\sk$ shuffle that are retained from the case of $k=2$,  and which play substantial roles in our proofs of the upper and lower bounds.
In Section~\ref{sec:lowerbounds}, we obtain lower bounds on the mixing time of the $\sk$ shuffle using a generalized version of the second moment method. 
In Section~\ref{sec:generalupperbound}, we introduce a coupling argument for the upper bound in Theorem \ref{thm:Precutoff}. In Section \ref{sec:lacoinupperbound}, we adapt the argument of Lacoin from \cite{Lacoin_2016} to prove a sharp upper bound on the mixing time for the $\sk$ shuffle with boundaries. We conclude by a discussion in Section \ref{sec:Comparison} comparing the $\sk$ shuffle with and without boundaries, and an open question. 

\section{Preliminaries on the $\sk$ shuffle}\label{sec:prelim}

\begin{figure}\label{fig:Skshuffle}
\centering
\begin{tikzpicture}[scale=0.6]

	\node[shape=circle,scale=1,draw] (A1) at (-3,0){$2$} ;
    \node[shape=circle,scale=1,draw] (B1) at (-1,0){$4$} ;
    \node[shape=circle,scale=1,draw,red] (C1) at (1,0){$7$} ;
	\node[shape=circle,scale=1,draw,red] (D1) at (3,0){$5$} ;
	\node[shape=circle,scale=1,draw,red] (E1) at (5,0) {$1$};
 	\node[shape=circle,scale=1,draw] (F1) at (7,0){$3$} ; 
 	\node[shape=circle,scale=1,draw] (G1) at (9,0){$6$} ; 	

		\draw[thick] (A1) to (B1);	
		\draw[thick] (B1) to (C1);
		\draw[thick] (C1) to (D1); 
		\draw[thick] (D1) to (E1);
		\draw[thick] (E1) to (F1);    
		\draw[thick] (F1) to (G1);      

	\node[shape=circle,scale=1,draw] (A2) at (-3,-2){$2$} ;
    \node[shape=circle,scale=1,draw] (B2) at (-1,-2){$4$} ;
    \node[shape=circle,scale=1,draw] (C2) at (1,-2){$5$} ;
	\node[shape=circle,scale=1,draw] (D2) at (3,-2){$1$} ;
	\node[shape=circle,scale=1,draw,red] (E2) at (5,-2) {$7$};
 	\node[shape=circle,scale=1,draw,red] (F2) at (7,-2){$3$} ; 
 	\node[shape=circle,scale=1,draw,red] (G2) at (9,-2){$6$} ; 	

		\draw[thick] (A2) to (B2);	
		\draw[thick] (B2) to (C2);
		\draw[thick] (C2) to (D2); 
		\draw[thick] (D2) to (E2);
		\draw[thick] (E2) to (F2);    
		\draw[thick] (F2) to (G2);      

	\node[shape=circle,scale=1,draw] (A3) at (-3,0-4){$2$} ;
    \node[shape=circle,scale=1,draw] (B3) at (-1,0-4){$4$} ;
    \node[shape=circle,scale=1,draw] (C3) at (1,0-4){$5$} ;
	\node[shape=circle,scale=1,draw] (D3) at (3,0-4){$1$} ;
	\node[shape=circle,scale=1,draw] (E3) at (5,0-4) {$3$};
 	\node[shape=circle,scale=1,draw] (F3) at (7,0-4){$7$} ; 
 	\node[shape=circle,scale=1,draw] (G3) at (9,0-4){$6$} ; 	

		\draw[thick] (A3) to (B3);	
		\draw[thick] (B3) to (C3);
		\draw[thick] (C3) to (D3); 
		\draw[thick] (D3) to (E3);
		\draw[thick] (E3) to (F3);    
		\draw[thick] (F3) to (G3);      
		
%
   
	\end{tikzpicture}
	\caption{\label{fig:Sk}Example of the $S_3$ shuffle on a segment of length $N$. The positions chosen in each step to be updated are marked in red.}
\end{figure}
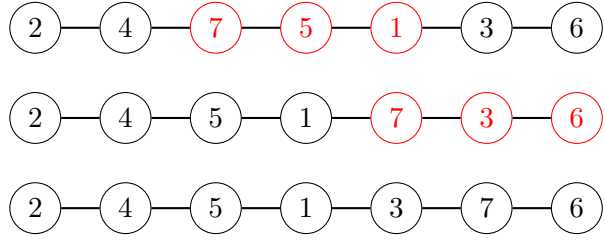

In this section, we give a formal definition of the $\sk$ shuffle with and without boundaries. For all $n \in \N$, we denote in the following by $\mathcal{S}_n$ the symmetric group on $n$ elements, and refer to $\sigma \in \mathcal{S}_N$ as a permutation on $[n]\coloneqq \{1,\dots,n\}$. For integers $i,j \in [n]$ with $i<j$, and permutations $\eta\in \mathcal{S}_n$ and $\sigma\in \mathcal{S}_{j-i+1}$, we define the configuration $\eta^{\sigma, i,j}$ by
\begin{equation}\label{eq:etaconfiguration}
    \eta^{\sigma,i,j}(m) \coloneqq \begin{cases}
    \eta(m) & \text{ if } m \notin [i,j] \cap [n] \\
    \eta(\sigma(m+1-i)) & \text{ if } m \in [i,j] \cap [n]
    \end{cases}
\end{equation} for all $m\in [n]$, i.e.\ we permute the cards in the interval $[i,j]$ according to $\sigma$. For $N \in \N$ and $k\in [N]$, the $\mathbf{\sk}$ \textbf{shuffle} on a deck of size $N$ is the continuous-time Markov chain on $\mathcal{S}_N$ whose generator is given by
\begin{equation}\label{eq:skgenwithoutboundary}
    (\mathcal{L}f)(\eta)= \sum_{ i=1}^{N-k} \frac{1}{k!} \sum_{\sigma \in S_{k}} \left( f(\eta^{\sigma,i,i+k-1}) - f(\eta)\right) 
\end{equation} 
for all functions $f \colon \mathcal{S}_N \rightarrow \R$, and $\eta \in \mathcal{S}_N$; see Figure \ref{fig:Sk} for a visualization. We denote the resulting dynamics by $(\eta_t)_{t \geq 0}$. For the $\mathbf{\sk}$ \textbf{shuffle with boundaries}, we set
\begin{equation}\label{eq:deltaWeights}
        \delta_{k-i}^{(k)}= \delta_{N-k+(i+1)}^{(k)} \coloneqq \frac{4k^2-6ik+3i^2-1}{(2(k-i)+1)(2(k-i)-1)}
\end{equation}
for $i \in [1, k-2]$ and define the dynamics $(\zeta_t)_{t \geq 0}$ on $\mathcal{S}_N$ with respect to the generator
\begin{align}\label{eq:skgenwithboundary}
    (\tilde{\mathcal{L}}f)(\zeta) \coloneqq (\mathcal{L}f)(\zeta) +  \sum_{ i=2}^{k-1} \frac{\delta_{i}^{(k)}}{i!} \sum_{\sigma \in S_{i}} \left( f(\zeta^{\sigma,1,i}) +  f(\zeta^{\sigma,N-i+1,N}) - 2 f(\zeta)\right) \, .
\end{align}
In words, we in addition apply a uniform permutation on the first and last $i$ cards at rate $\delta_{i}^{(k)}$, respectively. While the choice of $\delta_{i}^{(k)}$ may seem slightly unnatural at first glance, we will see that this choice of rates in \eqref{eq:deltaWeights} allows us to reuse the  eigenvalues and eigenfunctions for $k=2$ as approximate eigenvalues and approximate eigenfunctions for $k \geq 3$.

Note that both dynamics are reversible with respect to the uniform measure on $\mathcal{S}_N$, which we denote in the following by $\mu_N$. We let, for a probability measure $\nu$ on $\mathcal{S}_N$, 
\begin{equation}\label{def:TVDistance}
\TV{ \nu - \mu_N } \coloneqq \frac{1}{2}\sum_{\eta \in \mathcal{S}_N} \abs{\nu(\eta)-\mu_N(\eta)} = \max_{A \subseteq \mathcal{S}_N} \left(\nu(A)-\mu_N(A)\right) 
\end{equation} be the \textbf{total-variation distance} of $\nu$ and $\mu_N$, and let the $\boldsymbol\varepsilon$\textbf{-mixing time} of $(\eta_t)_{t \geq 0}$ be
\begin{equation}\label{def:MixingTime}
t_{\text{\normalfont mix}}(\varepsilon) \coloneqq \inf\left\lbrace t\geq 0 \ \colon \max_{\eta \in S_{N}} \TV{\P\left( \eta_t \in \cdot \ \right | \eta_0 = \eta) - \mu_N} < \varepsilon \right\rbrace
\end{equation} for all $\varepsilon \in (0,1)$. Similarly, we denote the  $\varepsilon$-mixing time of $(\zeta_t)_{t \geq 0}$ by $\tmixb(\e)$ for all $\e \in (0,1)$.
One central tool is the \textbf{height function} of the $\sk$ shuffle. For $\sigma \in \mathcal{S}_N$, we set
\be\label{eq:heightfunction}
h_{\sigma}(x, y)\coloneqq \left(\sum\limits_{z=1}^{x} \mathds{1}_{\{\sigma(z)\leq y\}}\right)-\frac{x y}{N} \eqcom
\ee with the convention that $h_\sigma(x)=h_\sigma(x,\lfloor N/2 \rfloor)$
for all $x\in [N]$ and $\sigma \in \mathcal{S}_N$. Further, with a slight abuse of notation, we write
$h_t(x)=h_{\eta_t}(x,\lfloor N/2 \rfloor)$ for all $t\geq 0$ for the $\sk$ shuffle $(\eta_t)_{t\geq 0}$, and similarly $h^{\prime}_t(x)=h_{\zeta_t}(x,\lfloor N/2 \rfloor)$  for the $\sk$ shuffle with boundaries. Observe the  height functions allow one to define a partial order on the state space $\mathcal{S}_N$. We say that $\sigma$ \textbf{dominates} $\sigma^{\prime}$, and write $\sigma \succeq \sigma^{\prime}$ if, for all $x,y \in [N]$,
\begin{equation}\label{def:PartialOrder}
    h_{\sigma}(x,y) \geq h_{\sigma^{\prime}}(x,y) \eqpd
\end{equation}
Note that  the maximal element with respect to $\succeq$ is $\sigma=\textup{id}$, the identity on~$\mathcal{S}_N$.

\subsection{An approximation of the spectrum}

In the following, we discuss the spectrum of the $\sk$ shuffle with boundaries. Apart from the special cases $k=2$ and $k=3$, we shall see that the eigenfunctions and eigenvalues do not have a simple closed form, and instead we propose the following candidates as approximate eigenvalues and eigenfunctions, i.e.\ we set
\begin{align}\label{eigenfunction}
    \Phi^{(j)}_{N,y}(\sigma)\coloneqq \sum_{x=1}^{N-1} h_{\sigma}^{\prime}(x,y)\psi_j(x) \quad \text{ where } \quad \psi_j(x) \coloneqq \sin \left(\frac{ x j\pi}{N}\right) \eqcom
\end{align}
with the convention that $\Phi_N(\sigma)=\Phi^{(1)}_{N,N/2}(\sigma)$. Moreover, we let  for all $j\in [N]$
\be
\begin{split}
\label{eq:eigenvaluebound}
\lambda_{N,k}^{(j)} \coloneqq (k-1)- \left[2 \sum_{i=1}^{k} \frac{k-i}{k} \cos \left(\frac{i j\pi}{N}\right)\right]
= \frac{kj^2\pi^2}{N^2}\left(\frac{k^2-1}{12}\right) + O\Big(\frac{k^5j^4}{N^4}\Big) \, .
\end{split}
\ee
The following lemma shows that for our particular choice of  $\delta_i$ defined in \eqref{eq:deltaWeights}, $\Phi_{N}^{(j)}$ are indeed suitable approximate eigenfunctions with respect to approximate eigenvalues $\lambda^{(j)}_{N,k}$.  
\begin{lemma}\label{lem:ApproxEigen}Recall from \eqref{eq:skgenwithboundary} the definition of $\tilde{\mathcal{L}}$. Let $k=k(N)$ be such that $k = o(N)$. Then there exists some constant $C>0$ such that for all $j\in [N]$, and for all $y \in [N-1]$ 
\be\label{operatorbound}
|(-\tilde{\mathcal{L}} \Phi^{(j)}_{N,y})(\sigma)-\lambda^{(j)}_{N,k} \Phi^{(j)}_{N,y}(\sigma)| \leq  C k^6j^3N^{-3}  
\ee
 for all $\sigma \in \mathcal{S}_N$. 
\end{lemma}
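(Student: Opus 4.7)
The plan is to compute $(\tilde{\mathcal L} \Phi^{(j)}_{N,y})(\sigma)$ by reducing to the action of $\tilde{\mathcal L}$ on the height functions $h'_\sigma(\cdot,y)$, to identify the eigenvalue $\lambda^{(j)}_{N,k}$ via a sine summation-by-parts, and then to show that the leftover boundary error is controlled by the specific choice of rates $\delta_i^{(k)}$.

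By linearity of both $\tilde{\mathcal L}$ and of $\Phi^{(j)}_{N,y} = \sum_{x=1}^{N-1} \psi_j(x)\, h'_\sigma(x,y)$, it suffices to compute the action of $\tilde{\mathcal L}$ on each $h'_\sigma(x,y)$. A direct calculation from \eqref{eq:skgenwithoutboundary}--\eqref{eq:skgenwithboundary} shows that a uniform shuffle on any block $[a,b]$ leaves $h'_\sigma(x,y)$ unchanged unless $x \in [a, b-1]$, in which case the expected new value is the linear interpolation between the endpoint values $h'_\sigma(a-1,y)$ and $h'_\sigma(b,y)$ (with the convention $h'_\sigma(0,y) = h'_\sigma(N,y) = 0$ for boundary blocks). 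Summing the bulk block contributions over $i$ covering a deep-interior site $x \in [k-1, N-k+1]$ and reparametrizing by $r = x-i+1$ yields the clean formula
\[
(\mathcal L h'_\sigma(\cdot,y))(x) = \sum_{r=1}^{k-1} \tfrac{k-r}{k}\bigl[h'_\sigma(x+r,y) + h'_\sigma(x-r,y)\bigr] - (k-1) h'_\sigma(x,y).
\]

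Substituting this interior formula into $\sum_x \psi_j(x)(\cdot)$, swapping the two sums, shifting indices and using the identity $\psi_j(m+r) + \psi_j(m-r) = 2\cos(rj\pi/N)\psi_j(m)$ (valid for every $m \in \mathbb{Z}$) produces
\[
\sum_{x=1}^{N-1} \psi_j(x)\bigl[h'_\sigma(x+r,y)+h'_\sigma(x-r,y)\bigr] = 2\cos(rj\pi/N)\, \Phi^{(j)}_{N,y}(\sigma) - B_r^L(\sigma) - B_r^R(\sigma),
\]
where $B_r^L(\sigma) := \sum_{m=1}^{r} \psi_j(m-r)\, h'_\sigma(m,y)$ and $B_r^R(\sigma) := \sum_{m=N-r}^{N-1} \psi_j(m+r)\, h'_\sigma(m,y)$ are the summation-by-parts residues arising from $\psi_j$ evaluated just outside $[0,N]$. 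By the definition of $\lambda^{(j)}_{N,k}$ in \eqref{eq:eigenvaluebound}, the non-residue contributions collapse to $-\lambda^{(j)}_{N,k}\Phi^{(j)}_{N,y}(\sigma)$, so
\[
-(\tilde{\mathcal L}\Phi^{(j)}_{N,y})(\sigma) - \lambda^{(j)}_{N,k}\Phi^{(j)}_{N,y}(\sigma) = \sum_{r=1}^{k-1}\tfrac{k-r}{k}(B_r^L+B_r^R)(\sigma) - \sum_{x} \psi_j(x)\, E(x,\sigma),
\]
where the second sum runs over the $O(k)$ boundary sites $x \in [1,k-2] \cup [N-k+2,N-1]$ and $E(x,\sigma)$ collects the difference between $(\tilde{\mathcal L} h')(x,y)$ and the interior formula, i.e. the missing bulk blocks plus the extra boundary blocks of rate $\delta_i^{(k)}$.

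It remains to compare $\sum_r \tfrac{k-r}{k}(B_r^L+B_r^R)$ with $\sum_x \psi_j(x)\, E(x,\sigma)$. Both involve only $\psi_j(n)$ and $h'_\sigma(m,y)$ with $n,m$ within distance $k$ of $\{0,N\}$, so the natural asymptotic is via the Taylor expansion $\psi_j(n) = nj\pi/N + O\bigl((nj\pi/N)^3\bigr)$. Matching the coefficient of each $h'_\sigma(m,y)$, the vanishing of the leading $(j/N)$-order contribution amounts to a linear system in the unknowns $\delta_2^{(k)},\dots,\delta_{k-1}^{(k)}$; one verifies by direct computation that this system is consistent across all values of $m$ and that its unique solution is precisely the rational expression \eqref{eq:deltaWeights}. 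The main technical obstacle is this algebraic consistency check, which rests on a nontrivial combinatorial identity relating the boundary coefficients. After the linear-order cancellation, the residual from the cubic Taylor term of $\psi_j$ at each boundary site is of size $O((kj/N)^3)$; combined with the a priori bound $|h'_\sigma(m,y)| \le \min(m,N-m) \le k$ near the boundary, and summing over $O(k)$ values of $m$, $O(k)$ values of $r$, and the two symmetric boundaries, one obtains the claimed estimate $C k^6 j^3 N^{-3}$.
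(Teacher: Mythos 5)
Your proposal follows essentially the same route as the paper: both reduce to the action on height functions via the linear-interpolation observation, collect the coefficient of each $h'_\sigma(x,y)$ so the bulk gives the exact trigonometric eigenrelation defining $\lambda^{(j)}_{N,k}$, and then use the choice of $\delta_i^{(k)}$ in \eqref{eq:deltaWeights} to cancel the first-order Taylor term of $\psi_j$ at the $O(k)$ boundary sites, leaving a cubic residual of size $O(k^5 j^3 N^{-3})$ per site and hence $O(k^6 j^3 N^{-3})$ in total. The algebraic cancellation you defer to a ``direct computation'' is exactly the identity \eqref{eq:Ingrid1} that the paper likewise states without detailed derivation, so the two arguments match in both structure and level of detail.
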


\begin{proof} We consider in the following only the case $y=\frac{N}{2}$ as the remaining cases are similar.
Notice that whenever we apply a permutation in $[a,b]$ chosen uniformly at random to a configuration $\sigma$, the expected height function evaluated at a position $x\in [a,b]$ is given by $(b-x)(b-a)^{-1}h_{\sigma}(a)+(x-a)(b-a)^{-1}h_{\sigma}(b)$. Therefore, by re-indexing the summation and using the definition of $\tilde{\mathcal{L}}$, we see that 
\begin{align*}
    \begin{split}\label{generatortimesph}
         &(\tilde{\mathcal{L}} \Phi^{(j)}_N)(\sigma)= \sum_{x=1}^{N-1}(\tilde{\mathcal{L}}h)(\sigma)\psi_j(x) =\sum\limits_{x=1}^{N-1}h_{\sigma}(x)a_x
    \end{split}
\end{align*}
where we set
\begin{equation*}
a_x = \begin{cases}
    \sum\limits_{i=1}^{k-1}\frac{k-i}{k}(\psi_j(x-i)+\psi_j(x+i)) - (k-1)\psi_j(x)  &\text{if } x\in [k,N-k] \\
    \sum\limits_{i=1}^{x-1}\frac{\delta^{(k)}_{x}i}{x}\psi_j(i)+\sum_{i=1}^{k-1}\frac{k-i}{k}\psi_j(x+i)-\Big(x+\sum\limits_{i=x+1}^{k-1}\delta^{(k)}_{i}\Big)\psi_j(x) &\text{if } x < k \\
    \sum\limits_{i=x}^{N-1}\frac{\delta^{(k)}_{x}(N-
           i)}{N-x+1}\psi_j(N-i)+\sum_{i=1}^{k-1}\frac{k-i}{k}\psi_j(x-i)-\Big(x+\sum\limits_{i=x+1}^{N-1}\delta^{(k)}_{i}\Big)\psi_j(x) &\text{if } x > N- k +1 \eqpd
\end{cases}
\end{equation*}
For all $x\in [N]$, a computation involving trigonometric identities shows that
\begin{align*}
\begin{split}
\lambda^{(j)}_{N,k} \psi_j(x) =  \sum\limits_{i=1}^{k-1}\frac{k-i}{k}(\psi_j(x-i)+\psi_j(x+i)) - (k-1)\psi_j(x)  \eqpd
\end{split}
\end{align*}
By our choice of $\delta^{(k)}_i$ in \eqref{eq:deltaWeights} another computation yields that 
\begin{equation}\label{eq:Ingrid1}
 \frac{\delta^{(k)}_x}{x}\left(\sum_{i=1}^{x-1} i^2\right)  -\Big(x-k+1+\sum\limits_{i=x+1}^{k-1}\delta^{(k)}_{i}\Big)x = \sum\limits_{i=1}^{k-1}\frac{k-i}{k}(x-i) 
\end{equation} for all $x < k$, and similarly when $x>N-k+1$. Using Taylor approximation, we get 
\begin{equation}\label{eq:Ingrid2}
   \left| \psi_{j}(x) - \frac{jx\pi}{N} + \frac{j^3x^3\pi }{6N^3} \right| \leq C \frac{j^5k^5}{N^5} 
\end{equation}
for some $C>0$ for all $N$ sufficiently large. By a telescopic summation, for all $x<k$
\begin{equation}\label{eq:BoundOnDelta}
     \delta^{(k)}_{x} \leq  \frac{7k^2}{4x^2-1} \quad  \text{ and }  \quad \left( \sum\limits_{i=k-x}^{k-1}\delta^{(k)}_{i}   \right) =   \frac{x(4k-3x-1)}{4(k-x)+2} \leq \frac{x^2}{4(k-x)}+x  
\end{equation} 
and thus, for some $c_1>0$, and all $j\geq 1$ and $x<k$
\begin{equation}\label{eq:Ingrid3}
 \left|  \sum\limits_{i=1}^{x-1}\frac{\delta^{(k)}_{x}i^4j^3}{xN^3}\right| +    \left| \frac{x^3j^3}{N^3}\sum\limits_{i=x+1}^{k-1}\delta^{(k)}_{i}  \right| \leq c_1\frac{k^4j^3}{N^3} .
\end{equation}
Since  $\max(h_{\zeta}(x),h_{\zeta}(N-x))\leq x$ for all $\zeta \in \mathcal{S}_N$, we obtain from \eqref{eq:Ingrid1}, \eqref{eq:Ingrid2} and \eqref{eq:Ingrid3}  
\begin{equation}\label{eq:approximateEigenfunctionError}
 | h_{\zeta}(x)a_x - \lambda^{(j)}_{N,k} \psi_j(x) | \leq c_2 \frac{j^3k^5}{N^3} 
\end{equation} for some constant $c_2>0$, uniformly in $x\in [k-1]$ as well as $x>N-k+1$. Summing over all $x$ in the boundary, we obtain the desired result.
\end{proof}


\subsection{Projection of the $\sk$ shuffle}\label{sec:skprojection}
Note that as in \cite{Lacoin_2016} and \cite{wilson2004mixing}, the $\sk$ shuffle has a natural projection which can be seen as an exclusion process on a hypergraph. Let $\sigma \in \mathcal{S}_N$ and let $K\in [ N-1]$ be fixed. We let $\xi^K_{\sigma} \in \{0,1\}^N$ be the configuration which we obtain by setting  $\xi^K_{\sigma}(x)= 1$ if the value of the card at position $x$ is at most $K$.
In other words, the first $K$ cards can be thought of as particles, while the remaining cards are given the role of empty sites. The corresponding dynamics $(\xi_t^K)_{t\geq 0}$ can then be described by the generator: 
\begin{equation}\label{eq:skexclusion}
    (\hat{\mathcal{L}}f)(\xi)= \sum_{ i=1}^{N-k} \frac{1}{k!} \sum_{\xi \in \{0,1\}^N} \left( f(\xi^{\sigma,i,i+k-1}) - f(\xi)\right)\eqcom
\end{equation} 
where $f$ is a function $f: \{0,1\}^N \to \R$. Here, $\xi^{\sigma,i,j}$ is defined as in \eqref{eq:etaconfiguration} for $\eta^{\sigma,i,j}$.

\section{Properties Preserved by the $\sk$ Shuffle} \label{sec:properties preserved}
In this section, we discuss properties that are shared by the $\sk$ and $S_2$ shuffles. It is of great importance that the stationary distribution of the $\sk$ shuffle is the uniform distribution, allowing us to transfer several properties from the case of $k=2$ to $k\geq 3$.

\subsection{Preservation of the Censoring Inequality}\label{subsec:censoring}
The censoring inequality is introduced by Peres and Winkler in \cite{Peres_Winkler_2013} and has since been used in many contexts, such as \cite{gantert2020mixing} and  \cite{Lacoin_2016}. In this section, we define a \textbf{censoring scheme} for the $\sk$-shuffle and show that the censoring inequality holds. In contrast to the typical use of censoring, we  also alter moves.\\

Formally, we define a censoring scheme $\mathcal{C}: \R_0^+ \to \mathcal{P}(E)$  as a c\`{a}dl\`{a}g function, where $\mathcal{P}(E)$ is the power set of edges $E\coloneqq \{ \{x,x+1\} \colon x\in [N-1]\}$. We obtain the \textbf{censored dynamics} $(\eta^{\mathcal{C}}_t)_{t \geq 0}$ from the $\sk$ shuffle $(\eta_t)_{t \geq 0}$ and a censoring scheme $(\mathcal{C}_t)_{t\geq 0}$ as follows: 
Suppose that at time $t$, we perform a shuffle on an interval $\mathcal{I}=[i,j]$. If $\mathcal{I}$ contains no edge from $\mathcal{C}_t$, then we perform the shuffle on $\mathcal{I}$ as in the original dynamics. However, if $\mathcal{I}$ contains at least one edge in $\mathcal{C}_t$, then we partition $\mathcal{I}$ into sub-intervals $(\mathcal{I}_m)_{m\geq 0}$ with $\mathcal{I}_m=[i_m,i_{m+1}-1]$ such that
\begin{equation}\label{eq:Interval}
\mathcal{I} = \bigcup_{m\geq 0} [i_m,i_{m+1}-1]
\end{equation} for some $i_0<i_1<i_2<\cdots$ and that  $\{i_m-1,i_m\} \in \mathcal{C}_t$ for all $m$. In each interval $\mathcal{I}_m$, we perform an independent $S_{|\mathcal{I}_m|}$-shuffle of the elements.

In words, we obtain the censored dynamics by performing independent $S_{\cdot}$ shuffles on the sub-intervals whenever we would perform a shuffle operation along a censored edge. The censoring inequality states that the law of the censored dynamics stochastically dominates the law of the original dynamics in terms of the stochastic order $\succeq$ from \eqref{def:PartialOrder} for any time~$t \geq 0$. Here, recall that a measure $\mu$ \textbf{stochastically dominates} a measure $\nu$ on $\mathcal{S}_N$ whenever $\mu(A) \geq \nu(A)$ for any set $A \subseteq \mathcal{S}_N$ which is increasing with respect to $\succeq$; see Section~22.2 of \cite{LevinPeresWilmer}. Moreover, this stochastic domination occurs uniformly in the choice of the jump times $(\mathcal{T}^x_i)_{i\geq 1}^{x\in [N-1]}$ at which we perform the $i^{\text{th}}$ update at the interval starting at~$x$. Formally, we say that the \textbf{censoring inequality} holds, if for all $t \geq 0$ and for any suitable family $(t_i^{x})^{x\in [N-1]}_{i\in \N}$ with $t_i^{x}\geq 0$,
\begin{equation}
\P( \eta^{\mathcal{C}}_t \in \cdot \, | \,  \mathcal{T}^x_i=t_{i}^x ) \succeq \P( \eta_t \in \cdot \, | \,  \mathcal{T}^x_i=t_{i}^x )\eqpd
\end{equation} 
Recall that a function $f$ is \textbf{increasing} if $f(\sigma) \geq f(\sigma^{\prime})$ when $\sigma \succeq \sigma^{\prime}$, and that $\mu_N$ denotes the uniform measure on $\mathcal{S}_N$. The next lemma is due to Lacoin; see Proposition 3.6 in  \cite{Lacoin_2016}. 
 
 \begin{lemma}\label{lem:CensoringSkSpecial} Let $\nu_0$ be an initial distribution for the $S_2$ shuffle on $\mathcal{S}_N$ such that $\sigma \mapsto \frac{\nu_0}{\mu_N}(\sigma)$ is increasing.  Let $\mathcal{C}$ be a censoring scheme and let $\nu_t^{\mathcal{C}}$ be the law of the $S_2$ shuffle with respect to $\mathcal{C}$. Then $\sigma \mapsto \frac{\nu^{\mathcal{C}}_t}{\mu_N}(\sigma)$ is increasing and the censoring inequality holds.
 \end{lemma}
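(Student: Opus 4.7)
The plan is to follow the classical Peres--Winkler censoring strategy as adapted by Lacoin to the $S_2$ shuffle. For $k=2$ each block $[i,i+1]$ contains the single edge $\{i,i+1\}$, so at each ring time the censored dynamics either performs the full heat-bath at that edge (if the edge is not in $\mathcal{C}_t$) or leaves the configuration unchanged (if it is). Writing $\tau_x$ for the transposition of the cards at positions $x$ and $x+1$ and
\[
(P_x f)(\sigma) := \tfrac{1}{2}\bigl(f(\sigma)+f(\tau_x\sigma)\bigr),
\]
the density $f^{\mathcal{C}}_t := \nu^{\mathcal{C}}_t/\mu_N$ is piecewise constant in $t$ and at each ring time at edge $\{x,x+1\}$ is either updated to $P_x f^{\mathcal{C}}_{t-}$ (uncensored) or left unchanged (censored).

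\textbf{Step 1 (monotonicity of the density).} I would first show that $P_x$ preserves the cone of increasing functions. Since $\succeq$ is generated by covers of the form $\sigma'\prec\sigma=\tau_y\sigma'$ (with $\sigma'(y)>\sigma'(y+1)$), it suffices to verify $P_xf(\sigma)\geq P_xf(\sigma')$ on each such cover. The case $y=x$ is immediate, and the case $|y-x|\geq 2$ follows from $\tau_x\tau_y=\tau_y\tau_x$, which implies that the same cover persists after composing with $\tau_x$. The remaining case $y=x\pm 1$ requires a case analysis on the three-card pattern at positions $x,x+1,x+2$ together with the braid relation $\tau_x\tau_{x+1}\tau_x=\tau_{x+1}\tau_x\tau_{x+1}$; the upshot is a short chain of covers linking $\{\sigma,\tau_x\sigma\}$ to $\{\sigma',\tau_x\sigma'\}$ from which the averaged inequality can be extracted. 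Iterating this property over the a.s.\ finitely many ring times in $[0,t]$ gives the first assertion of the lemma: $f^{\mathcal{C}}_t$ is increasing for every $t\geq 0$.

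\textbf{Step 2 (censoring inequality by pairing).} For every increasing test function $g$ and every increasing density $f$, pairing $\sigma$ with $\tau_x\sigma$ and using $\mu_N(\sigma)=\mu_N(\tau_x\sigma)$ yields
\begin{align*}
\mathbb{E}_{\mu_N}[g\cdot f]-\mathbb{E}_{\mu_N}[g\cdot P_xf]
&= \tfrac{1}{2}\sum_{\sigma\in \mathcal{S}_N}g(\sigma)\bigl(f(\sigma)-f(\tau_x\sigma)\bigr)\mu_N(\sigma)\\
&= \tfrac{1}{4}\sum_{\{\sigma,\tau_x\sigma\}}\bigl(g(\sigma)-g(\tau_x\sigma)\bigr)\bigl(f(\sigma)-f(\tau_x\sigma)\bigr)\mu_N(\sigma)\geq 0,
\end{align*}
since on each orbit $\{\sigma,\tau_x\sigma\}$ the two factors have matching signs. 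Hence replacing one uncensored update by a censored one can only increase the expectation of any increasing test function. To pass from this single-step statement to the full inequality, I would toggle ring times one at a time (interpolating between the uncensored scheme $\emptyset$ and $\mathcal{C}$), each time applying the above with $g$ replaced by the $\mu_N$-adjoint of the later updates (still a composition of operators $P_y$ and identities, hence still increasing by Step~1) and $f$ equal to the intermediate density (increasing by Step~1). A finite induction then yields
$\mathbb{E}[g(\eta_t)\mid \mathcal{T}^x_i=t_i^x] \leq \mathbb{E}[g(\eta^{\mathcal{C}}_t)\mid \mathcal{T}^x_i=t_i^x]$,
uniformly in the choice of ring times, which is the censoring inequality.

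\textbf{Main obstacle.} The only genuinely non-trivial ingredient is the $y=x\pm 1$ sub-case of Step~1: the permutations $\tau_x\sigma$ and $\tau_x\tau_{x+1}\sigma$ are typically $\succeq$-incomparable, so termwise monotonicity of $P_x f$ fails and one must exploit the finer three-position structure at $\{x,x+1,x+2\}$. This is precisely the computation underlying Peres--Winkler and Lacoin's Proposition~3.6, which we invoke.
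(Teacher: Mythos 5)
The paper itself does not prove this lemma: it is quoted directly as Proposition~3.6 of \cite{Lacoin_2016}, so what you are really reconstructing is the Peres--Winkler argument behind that citation. Your Step~2 is exactly that standard toggling/interpolation argument, and it is sound once Step~1 is available: each $P_x$ is self-adjoint in $L^2(\mu_N)$, compositions of kernels preserving increasing functions again preserve them, and your pairing identity is correct. The problem lies in Step~1, and at the point where it gets hard you ``invoke the computation underlying Lacoin's Proposition~3.6,'' which is circular here, since that proposition is precisely the statement being proved.

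The genuine gap is the reduction of Step~1 to adjacent-transposition covers. The order $\succeq$, i.e.\ pointwise domination of $h_\sigma(x,y)$ in both arguments, is the (reversed) Bruhat order on $\mathcal{S}_N$, not a weak order: its covering relations are transpositions that in general are adjacent neither in positions nor in values, whereas the transitive closure of $\succeq$-increasing adjacent swaps is only the weak order, which is strictly coarser. Concretely, in $\mathcal{S}_4$ the permutations $3412$ and $1432$ (one-line notation) satisfy $1432\succ 3412$, their rank functions differ entrywise and their lengths differ by one, so this is a covering pair for $\succeq$; yet they differ by exchanging the entries in positions $1$ and $3$ (equivalently the values $1$ and $3$). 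Being a cover, they cannot be joined by any chain of $\succeq$-increasing adjacent swaps (such a chain would have to be a single adjacent swap, which this is not), so checking only the cases $y=x$, $|y-x|\ge 2$, $y=x\pm 1$ does not establish that $P_xf$ is increasing. The standard repair bypasses covers entirely and shows that $P_x$ is a monotone kernel: writing $r_\sigma(x,y)=h_\sigma(x,y)+xy/N$ for the counting function, the edge update leaves $r(\cdot,y)$ unchanged off $x$ and, using one fair coin for all $y$ simultaneously, sets $r(x,y)$ to $\min\bigl(r(x-1,y)+1,\,r(x+1,y)\bigr)$ (ascending) or $\max\bigl(r(x-1,y),\,r(x+1,y)-1\bigr)$ (descending); both maps are nondecreasing in the neighbouring values, so running two configurations ordered by $\succeq$ with the same coin preserves the order for every comparable pair at once, and hence $P_x$ preserves increasing functions. (Minor point: the block update acts on positions, $\sigma\mapsto\sigma\tau_x$, so your writing $\tau_x\sigma$ and the condition $\sigma'(y)>\sigma'(y+1)$ mix the two conventions; this should be fixed in any cleaned-up version, but it is not the essential issue.)
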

 
In the following, our goal to extend this result to general $k \geq 3$.

 \begin{lemma}\label{lem:CensoringSk}
  Let $k\geq 3$ and $\nu_0$ be an initial distribution for the $S_k$ shuffle on $\mathcal{S}_N$ such that $\sigma \mapsto \frac{\nu_0}{\mu_N}(\sigma)$ is increasing.  Let $\mathcal{C}$ be a censoring scheme and let $\nu_t^{\mathcal{C}}$ be the law of the $S_k$ shuffle with respect to $\mathcal{C}$. Then $\sigma \mapsto \frac{\nu_t^{\mathcal{C}}}{\mu_N}(\sigma)$ is increasing and the censoring inequality holds.
 \end{lemma}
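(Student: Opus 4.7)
The plan is to reduce Lemma~\ref{lem:CensoringSk} to Lemma~\ref{lem:CensoringSkSpecial} by observing that a single $S_k$ block-shuffle on any interval $[a,b]\subseteq[N]$ is the infinite-time limit of the $S_2$ shuffle restricted to $[a,b]$. For any interval $[a,b]$, let $T_{[a,b]}$ denote the Markov operator that takes a law $\nu$ and outputs the law obtained by replacing the configuration on $[a,b]$ by a uniformly random permutation of the same multiset of cards, leaving the rest fixed. Consider the $S_2$ shuffle started from $\nu$ and run under the stationary censoring scheme $\mathcal{C}'$ that censors every edge $\{y,y+1\}$ with $y\notin [a,b-1]$. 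This restricted $S_2$ dynamics is irreducible on each fiber of configurations with a fixed multiset on $[a,b]$ and has uniform stationary measure on that fiber, so its law at time $s$ converges to $T_{[a,b]}\nu$ as $s\to\infty$.

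Applying Lemma~\ref{lem:CensoringSkSpecial} to $\mathcal{C}'$ and to the ``fully censored'' scheme respectively, the restricted $S_2$ law at every finite time $s$ has monotone density with respect to $\mu_N$ and is stochastically dominated by $\nu$; both properties are closed on the finite simplex of laws on $\mathcal{S}_N$ and pass to the limit, yielding (i) $T_{[a,b]}$ preserves monotone density, and (ii) if $\nu/\mu_N$ is increasing then $\nu\succeq T_{[a,b]}\nu$. A third property is needed, namely (iii) that $T_{[a,b]}$ preserves stochastic domination between arbitrary laws. Since Lemma~\ref{lem:CensoringSkSpecial} only compares two dynamics starting from the \emph{same} initial law, I would prove (iii) separately via the basic coupling of the $S_2$ shuffle (identical Poisson clocks and identical swap outcomes at each edge), whose monotonicity in the initial configuration is a classical fact for the symmetric exclusion process; one verifies it projection-by-projection through the exclusion representations $\xi^K_\sigma$ from Section~\ref{sec:skprojection}, then passes to the $s\to\infty$ limit.

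Granted (i)--(iii), the lemma follows by conditioning on the Poisson jump times of the $S_k$ shuffle and inducting on them. The inductive hypothesis is that both $\nu_{t^-}$ and $\nu_{t^-}^{\mathcal{C}}$ have monotone density with respect to $\mu_N$ and $\nu_{t^-}^{\mathcal{C}}\succeq \nu_{t^-}$. At a jump at block $[x,x+k-1]$, the uncensored chain applies $T_{[x,x+k-1]}$ while the censored chain applies the product $T_{I_1}\circ\cdots\circ T_{I_m}$, where $I_1,\dots,I_m$ is the partition of $[x,x+k-1]$ induced by the censored edges at time $t$. Monotone density is preserved at both updates by iterating (i). For the domination, the identity $T_{[x,x+k-1]}=T_{[x,x+k-1]}\circ T_{I_1}\circ\cdots\circ T_{I_m}$ (the full uniform shuffle on the block overrides any preceding sub-shuffles on subsets of that block, since the multisets on the $I_j$'s together determine the multiset on $[x,x+k-1]$) gives
\begin{equation*}
    \nu_t \;=\; T_{[x,x+k-1]}\bigl(T_{I_1}\cdots T_{I_m}\nu_{t^-}\bigr)\;\preceq\;T_{I_1}\cdots T_{I_m}\nu_{t^-}\;\preceq\;T_{I_1}\cdots T_{I_m}\nu_{t^-}^{\mathcal{C}}\;=\;\nu_t^{\mathcal{C}},
\end{equation*}
using (i) and (ii) for the first inequality and (iii) together with the inductive hypothesis for the second.

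The main technical obstacle I anticipate is Property (iii): preserving stochastic order between distinct initial laws under the restricted $S_2$ shuffle and its limit $T_{[a,b]}$, since Lemma~\ref{lem:CensoringSkSpecial} gives no direct comparison of this type. I would address this via the explicit grand coupling for the $S_2$ shuffle and the exclusion projection, and then transfer the conclusion through the same $s\to\infty$ limit; the remaining steps are structural and follow the Peres--Winkler template faithfully.
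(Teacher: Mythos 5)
Your argument is correct in substance, but it is architecturally different from the paper's. The paper does not run the Peres--Winkler induction itself: it argues by contradiction, replacing every $S_k$ block update (and, in the censored chain, every sub-interval update) by $M$ discrete $S_2$ moves on that block, so that by Lemma~\ref{lem:Approximation} both the censored and uncensored $S_k$ chains are within total-variation $\delta/4$ of genuine (censored) $S_2$ shuffles; Lacoin's Lemma~\ref{lem:CensoringSkSpecial} is then applied wholesale to those approximating $S_2$ chains, contradicting the assumed $\delta$-violation of monotonicity. You instead establish the three Peres--Winkler ingredients directly for the exact block operators $T_{[a,b]}$ --- (i) preservation of increasing density, (ii) $\nu\succeq T_{[a,b]}\nu$ when $\nu/\mu_N$ is increasing, (iii) monotonicity of $T_{[a,b]}$ for $\preceq$ --- each via an $s\to\infty$ limit of the within-block $S_2$ dynamics, and then induct over the jump times using the correct override identity $T_{[x,x+k-1]}=T_{[x,x+k-1]}T_{I_1}\cdots T_{I_m}$. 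Your route is exact (no $M$, no $\delta/4$ bookkeeping) and isolates reusable properties of the block heat-bath operators; its price is ingredient (iii), which the paper never needs because Lacoin's lemma is invoked for the whole trajectory rather than update by update.

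Two points need tightening. First, your derivation of (ii) by "applying Lemma~\ref{lem:CensoringSkSpecial} to $\mathcal{C}'$ and to the fully censored scheme" compares two censored chains with nested schemes, which is not literally what that lemma states (it compares a censored chain with the uncensored one). This is repaired by the quenched form of the censoring inequality as the paper states it: take the all-edges-censored scheme and jump-time families in which only edges inside $[a,b]$ ring before time $t$; then the censored chain stays at $\nu$ while the uncensored chain is exactly the restricted within-block $S_2$ dynamics, so the lemma gives that its law is $\preceq\nu$ for every such realization, and averaging over the in-block clocks and letting $s\to\infty$ yields (ii). Second, for (iii) you must check the monotone coupling of a single-edge heat-bath update for the full order $\succeq$, i.e.\ simultaneously for all levels $K$ of the projections $\xi^K$, not level by level with independent randomness; one uniform coin deciding how to sort the two cards does this, and closedness of $\{(\tau,\tau'):\tau\succeq\tau'\}$ on the finite space lets you pass the coupled pair to the limit operator $T_{[a,b]}$. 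With these clarifications your proof is complete.
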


We use the next lemma to approximate a single update in the  $\sk$ shuffle with censoring. 
 
 \begin{lemma}\label{lem:Approximation} 
   Let $\nu_t$ be the distribution of the $S_2$ shuffle on $\mathcal{S}_k$ at time $t$, then we have that
  \begin{equation}
  \lim_{t \rightarrow \infty}\TV{\nu_t-\mu_k} = 0 \, .
  \end{equation} 
  \end{lemma}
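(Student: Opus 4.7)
The plan is to recognize that this statement is the standard convergence-to-equilibrium result for an irreducible, reversible continuous-time Markov chain on a finite state space, and to verify the two required ingredients for the $S_2$ shuffle on $\mathcal{S}_k$.

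First, I would confirm that $\mu_k$ is stationary. From the definition of the generator $\mathcal{L}$ in \eqref{eq:skgenwithoutboundary} specialized to $k=2$ and to a deck of size $k$, the transition rate from $\sigma$ to $\sigma^{\tau,i,i+1}$ (for $\tau$ the transposition in $\mathcal{S}_2$) equals the rate of the reverse move, since the shuffle picks a uniformly random permutation of a block. Hence the chain is reversible with respect to the uniform measure $\mu_k$, so in particular $\mu_k$ is stationary.

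Second, I would verify irreducibility. The available moves are the adjacent transpositions $(i,i+1)$ for $i \in [k-1]$, and it is classical that these generate the symmetric group $\mathcal{S}_k$. Hence for any pair of configurations $\sigma, \sigma' \in \mathcal{S}_k$, there exists a finite sequence of adjacent transpositions taking $\sigma$ to $\sigma'$; since each such move occurs at positive rate, the chain is irreducible on the finite state space $\mathcal{S}_k$.

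With reversibility (hence stationarity of $\mu_k$) and irreducibility established, the standard convergence theorem for irreducible continuous-time Markov chains on finite state spaces (see, e.g., Theorem 20.3 in \cite{LevinPeresWilmer}) yields
\begin{equation*}
\lim_{t \to \infty} \TV{\nu_t - \mu_k} = 0
\end{equation*}
for any initial distribution, which is precisely the claim. There is no real obstacle here; the lemma is a soft fact included only to justify approximating the effect of a single block update via running an $S_2$ shuffle inside the block for a long time in the proof of Lemma~\ref{lem:CensoringSk}.
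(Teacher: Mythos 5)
Your proposal is correct and follows the same route as the paper: the paper's proof is exactly the observation that the $S_2$ shuffle is an irreducible continuous-time chain with uniform stationary law, so convergence in total variation is immediate; you simply spell out the verification of reversibility and irreducibility that the paper leaves implicit.
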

  \begin{proof} This is an immediate consequence of the fact that the $S_2$ shuffle is an irreducible continuous-time Markov chain, which has the uniform distribution as its unique stationary law.
  \end{proof}
  
 \begin{proof}[Proof of Lemma \ref{lem:CensoringSk}] We will in the following, only show that $\sigma \mapsto \frac{\nu_t^{\mathcal{C}}}{\mu_N}(\sigma)$ is increasing for any censoring scheme $\mathcal{C}$. The fact that the censoring inequality holds then follows from the same arguments as  Theorem 22.20 in \cite{LevinPeresWilmer}. To do so, we proceed by a proof by contradiction. Suppose there exists a censoring scheme $\mathcal{C}$, a sequence of times $(t^x_i)_{i\geq 1}^{x\in [N-1]}$, a time $t\geq 0$, some $\delta >0$, and permutations $\sigma \succeq \sigma^{\prime} $ such that 
\be \label{eq:contradiciton}
\P( \eta_t = \sigma^{\prime} \, | \,  \mathcal{T}^x_i=t_{i}^x ) - \P(\eta^{\mathcal{C}}_t =\sigma \, | \,  \mathcal{T}^x_i=t_{i}^x ) 
\geq \delta \eqpd
\ee
Let $M> 0$ which will be chosen later. Let $\mathcal{J}_{t} \coloneqq \{ (x,i) : t_x^i \leq t\}$, and let 
 $(\tilde{\eta}_t)_{t\geq 0}$ and $(\tilde{\eta}_t^{\mathcal{C}})_{t\geq 0}$ be two processes on $\mathcal{S}_N$ defined in the following way. For all $(x,i) \in \mathcal{J}_t$, at time $t_x^i$ in $(\tilde{\eta}_t)_{t\geq 0}$ we perform a sequence of $M$ many (discrete time) $S_2$ shuffle moves on the interval $[x, x+(k-1)]$. Similarly for the process $(\tilde{\eta}_t^{\mathcal{C}})_{t\geq 0}$, we apply for all $(x,i) \in \mathcal{J}_t$ a sequence of $M$ many $S_2$ shuffles, but for each interval in the decomposition $\mathcal{I}$ defined in \eqref{eq:Interval} for the censoring scheme at $t_x^i$ separately. By Lemma \ref{lem:Approximation} and a standard comparison between discrete time and continuous time Markov chains -- see Theorem 20.3 in \cite{Peres_Winkler_2013} -- and the triangle inequality for total-variation distance,  we can choose $M = M(t, \mathcal{J}_t, \delta, k, \mathcal{C})$ sufficiently large, such that
\begin{align}
\begin{split}
    \TV{\P( \tilde{\eta_t} \in \cdot \, | \,  \mathcal{T}^x_i=t_{i}^x ) - \P(\eta_t \in \cdot \, | \,  \mathcal{T}^x_i=t_{i}^x )}
&\leq \frac{\delta}{4} \\
 \TV{\P( \tilde{\eta_t}^{\mathcal{C}} \in \cdot \, | \,  \mathcal{T}^x_i=t_{i}^x ) - \P(\eta_t^{\mathcal{C}} \in \cdot \, | \,  \mathcal{T}^x_i=t_{i}^x )}
&\leq \frac{\delta}{4}\eqpd
\end{split}
\end{align}
Observe that $(\tilde{\eta}_t)_{t\geq 0}$, respectively $(\tilde{\eta}_t^{\mathcal{C}})_{t\geq 0}$, is an $S_2$ shuffles, respectively an $S_2$ shuffle with respect to some censoring scheme $\tilde{\mathcal{C}}$. Thus, using Lemma \ref{lem:CensoringSkSpecial}, and again the triangle inequality for total-variation distance we obtain the desired contradiction to \eqref{eq:contradiciton}.
\end{proof}

\begin{remark} \label{rem:Censor}
    Note that the same arguments as in the proof of Lemma \ref{lem:CensoringSk} apply to the $\sk$ shuffle with boundaries, establishing that the censoring inequality holds.
\end{remark}
\subsection{Preservation of the strong Rayleigh property}
In this section we discuss the strong Rayleigh property and its relation to negative dependence. Let $n \in\N$, and define a function $f \in \mathbb{C}\left[z_1, \ldots, z_n\right]$ with real coefficients to be \textbf{real stable}
if $f\left(z_1, \ldots, z_n\right) \neq 0$ whenever $\mathfrak{I m}\left(z_j\right)>0$ for $1 \leq j \leq n$.
Let $\pi$ be a probability measure over $\{0,1\}^n$. For $(X_1, \dots X_N)\sim \pi$ is called \textbf{strongly Rayleigh} if its generating polynomial 
\be
\left(z_1, \ldots, z_n\right) \longmapsto \mathbb{E}_\pi\left[\prod_{i=1}^n z_i^{X_i}\right]
\ee
is real stable. The strong Rayleigh property was introduced  by Borcea, Br\"{a}nd\'{e}n, and Liggett in \cite{borcea2009negative}. Recall for $K \in [N-1]$ the projection $(\xi_t^K)_{t\geq 0}$  of the $S_k$ shuffle to the first $K$ cards, defined in Section \ref{sec:skprojection}. The following lemma can be found as Proposition 5.1 in \cite{borcea2009negative}. 

\begin{lemma}[Proposition 5.1 in \cite{borcea2009negative}]
Let $K\in [N-1]$. Let $\nu_t$ denote the law of the projection of the $S_2$ shuffle to the first $K$ cards. If $\nu_0$ is strongly Rayleigh then so is the distribution of $\nu_t$ for all $t>0$.
\end{lemma}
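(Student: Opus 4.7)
The plan is in two stages. First, I would identify the projected dynamics $(\xi_t^K)_{t\geq 0}$ with a classical process. On the binary image $\{0,1\}^N$, a uniform resample of the two cards on a bond $\{i,i+1\}$ leaves the projection unchanged whenever the two binary labels agree, and otherwise swaps them with probability $1/2$. Consequently, the projection of the $S_2$ shuffle is (up to an overall time rescaling by a factor of $2$) the nearest-neighbor symmetric simple exclusion process on $[N]$. Under this identification the lemma reduces to the statement that the exclusion semigroup preserves the strong Rayleigh class, which is the content of \cite[Proposition 5.1]{borcea2009negative}. The entire proof in the paper can therefore be phrased as an invocation of that theorem after verifying the identification above.

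For completeness, let me sketch the strategy used in \cite{borcea2009negative}. By the Trotter product formula, it suffices to check that a single-bond update preserves real stability, since the class of real stable polynomials of a given degree is closed under non-zero pointwise limits by Hurwitz's theorem. After running time $t$ of the single-bond dynamics on the edge $\{i,i+1\}$, the generating polynomial evolves as
\[
P(z) \;\longmapsto\; \alpha_t\, P(z) \;+\; (1-\alpha_t)\, P(\sigma_{i,i+1}z), \qquad \alpha_t \,=\, \tfrac{1+e^{-t}}{2} \,\in\, [\tfrac{1}{2},1],
\]
where $\sigma_{i,i+1}$ denotes the coordinate swap $z_i\leftrightarrow z_{i+1}$. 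Composing such updates across the bonds and taking the Trotter limit therefore reduces the proof to the single operator displayed above.

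The technical heart of the argument, and what I expect to be the main obstacle if one wanted to reprove the result from scratch, is to verify that this convex combination operator preserves real stability of multi-affine polynomials. Writing $P = z_iz_{i+1}A + z_iB + z_{i+1}C + D$ with $A,B,C,D$ multi-affine in the remaining variables, the updated polynomial is
\[
z_iz_{i+1}\,A \;+\; z_i\bigl(\alpha_t B + (1-\alpha_t)C\bigr) \;+\; z_{i+1}\bigl(\alpha_t C + (1-\alpha_t)B\bigr) \;+\; D.
\]
One then uses the Borcea--Br\"{a}nd\'{e}n characterization of real stability for multi-affine polynomials with nonnegative coefficients, namely the Rayleigh inequality $\partial_iP\cdot\partial_jP - P\cdot \partial_i\partial_jP \geq 0$ on $\mathbb{R}^N$, to check that the required quadratic inequality on mixed partials is preserved by the update, with the constraint $\alpha_t\geq 1/2$ playing the decisive role. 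After this verification, the proof is concluded by passing to the semigroup limit and combining with the assumption that $\nu_0$ is strongly Rayleigh.
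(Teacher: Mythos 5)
Your proposal is correct and takes essentially the same route as the paper, which offers no argument of its own beyond citing Proposition 5.1 of Borcea--Br\"{a}nd\'{e}n--Liggett: as you observe, the projection of the $S_2$ shuffle is (up to a harmless time change, and regardless of the exact constant in $\alpha_t$) the symmetric simple exclusion process, so the cited result applies directly. Your further sketch of the internal BBL argument (Trotter reduction to the single-bond operator $\alpha P + (1-\alpha)P^{\sigma}$ with $\alpha \geq \tfrac{1}{2}$) is extra detail the paper does not attempt, and it is consistent with the cited proof.
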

We have the following simple consequence for the $\sk$ shuffle.
\begin{corollary}\label{cor:skstronglyrayleigh}
Let $K,k \in [N-1]$. Let $\nu_t$ denote the law of the projection of the $\sk$ shuffle to the first $K$ cards. If $\nu_0$ is strongly Rayleigh then so is the distribution of $\nu_t$ for all $t>0$. The same holds for the $\sk$ shuffle with boundaries and censoring.
\end{corollary}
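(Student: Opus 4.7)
The plan is to reduce the $\sk$ shuffle to the $S_2$ shuffle via Lemma~\ref{lem:Approximation}, and then to invoke the preservation result stated just before the corollary together with the fact that the strong Rayleigh property is closed under total-variation limits. The projection $(\xi^K_t)_{t\ge 0}$ is a continuous-time Markov chain on $\{0,1\}^N$ whose jumps, at each ring of a Poisson clock associated with a block $[i,i+k-1]$, replace $(\xi^K(i),\ldots,\xi^K(i+k-1))$ by a uniformly random permutation of these values; it is enough to show that a single such block update preserves the strong Rayleigh property and to concatenate.

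First, I would fix a block $\mathcal{I}=[i,i+k-1]$, let $\nu$ denote the law of $\xi^K$ just before the update and $\nu^{\star}$ the law just after. By Lemma~\ref{lem:Approximation} applied to the $S_2$ shuffle restricted to $\mathcal{I}$, the law $\nu_T$ obtained from $\nu$ by running this restricted $S_2$ shuffle for time $T$ satisfies $\TV{\nu_T-\nu^{\star}}\to 0$ as $T\to\infty$. Iterating Proposition~5.1 of \cite{borcea2009negative} over the jumps of the $S_2$ shuffle inside $\mathcal{I}$, the law $\nu_T$ is strongly Rayleigh for every $T\ge 0$. Since the generating polynomial of a measure on $\{0,1\}^N$ is multilinear in $N$ variables and its coefficients depend continuously on the measure in total variation, the TV-convergence $\nu_T\to\nu^{\star}$ yields uniform convergence of the generating polynomials on compact subsets of $\mathbb{C}^N$. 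Hurwitz's theorem then transfers real stability to the limit, so $\nu^{\star}$ is strongly Rayleigh.

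For the full $\sk$ shuffle I would condition on the almost surely finite sequence of block-clock ring times in $[0,t]$ and apply the single-block preservation inductively at each jump; the law is constant between jumps, so $\nu_t$ is strongly Rayleigh for every $t\ge 0$. The case of the $\sk$ shuffle with boundaries is identical, since each boundary update applies a uniform permutation on an initial or terminal subinterval of size $i<k$, which by the same Lemma~\ref{lem:Approximation} is a TV limit of iterated $S_2$ moves on that subinterval. For the censored dynamics, a clock ring on an interval $\mathcal{I}$ containing censored edges acts as independent uniform permutations on the subintervals $\mathcal{I}_m$ of \eqref{eq:Interval}; each such subinterval move is again a TV limit of $S_2$ moves localized to $\mathcal{I}_m$, and this local sequence of $S_2$ moves respects the censoring scheme $\mathcal{C}$, so the conclusion follows from the $S_2$ version of the preservation result under censoring.

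The main obstacle will be verifying cleanly that the strong Rayleigh property passes through TV limits. This is essentially an application of Hurwitz's theorem to a uniformly convergent sequence of real stable polynomials of bounded multidegree with uniformly bounded coefficients, together with the observation that a limit polynomial with no zero in the open upper half-plane in each variable is either identically zero on a coordinate hyperplane or real stable; since the limit is a genuine probability generating polynomial, the zero case is excluded. All remaining steps are straightforward reductions to the already established $S_2$ case.
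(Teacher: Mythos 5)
There is a genuine gap at the annealing step. Your treatment of a single block update is fine: realizing the uniform permutation of a block as the total-variation limit of the block-restricted $S_2$ shuffle and passing to the limit by (multivariate) Hurwitz does show that the pure symmetrization kernel $P_{\mathcal I}$ (``apply a uniform permutation to the block $\mathcal I$'') preserves the strong Rayleigh property. The problem is the passage to the continuous-time law $\nu_t$: you condition on the Poisson clock realization, note that each conditional law is strongly Rayleigh (being $\nu_0$ pushed through a deterministic finite sequence of block symmetrizations), and then conclude for $\nu_t$. But $\nu_t$ is a \emph{mixture} over clock realizations of these conditional laws, and the strong Rayleigh property (indeed even negative association) is not closed under mixtures: for instance $\tfrac12\delta_{(1,1)}+\tfrac12\delta_{(0,0)}$ is a mixture of strongly Rayleigh measures whose generating polynomial $\tfrac12(1+z_1z_2)$ vanishes at $z_1=z_2=\mathrm{i}$. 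So ``strongly Rayleigh conditionally on the clocks'' does not imply ``strongly Rayleigh'', and this implication is precisely where the content of the corollary lies. The same ambiguity appears inside your block step (``iterating \dots over the jumps of the $S_2$ shuffle''), although there it is harmless, since you may instead cite the continuous-time preservation result of \cite{borcea2009negative} for the exclusion process localized to the sub-interval.

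The paper avoids this issue by arguing at the level of semigroups rather than trajectories: by the Trotter product formula, $e^{t\hat{\mathcal{L}}}$ is a limit of products of single-block semigroups $e^{sA_{\mathcal I}}=e^{-s}\,\mathrm{id}+(1-e^{-s})P_{\mathcal I}$, so what is needed is that this specific convex combination of the identity and the symmetrization preserves stability --- a statement that does \emph{not} follow from each of the two operators preserving it (again because of mixtures) and which the paper gets from Theorem~1.2 of \cite{BorceaBranden2009}; the limit in the product formula is then handled by exactly the Hurwitz-type closure you already invoke. To repair your argument you would need to supply this operator-level fact (only in the case $k=2$ is $e^{sA_{\mathcal I}}$ literally a single-edge exclusion semigroup, so your reduction to the $S_2$ case does not substitute for it), and then your limit argument goes through for the shuffle with boundaries and censoring as in the paper.
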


\begin{proof}
The fact that any individual $S_{i+1}$ update of an interval $[x, x+i]$ for any $x \in [N-i]$ and $i \geq 1$ preserves the strong Rayleigh property is a consequence of Theorem 1.2 in~\cite{BorceaBranden2009}. Using the Trotter Product formula -- Theorem 3.44 in \cite{liggett2010continuous} --  we obtain the desired statement for the $\sk$ shuffle as the generator of the $\sk$ shuffle with boundaries and censoring can be written as the sum of generators of $S_i$ shuffle moves for time interval in which the censoring scheme remains constant.
\end{proof}
Next, we say that a set of random variables $\{X_1, \dots, X_n\}$ taking values in $\{0,1\}$ is \textbf{negatively dependent} if for all $S \subset [n]$, we have
\begin{equation}
    \bE\left[\prod\limits_{i\in S}X_i \right]\leq \prod\limits_{i \in S} \bE\left[X_i\right].
\end{equation}

In \cite{borcea2009negative} it is shown that strongly Rayleigh implies negative dependence, and we will use the following direct consequence of negative dependence, which we state without proof. 
\begin{corollary}\label{cor:varbound}
    Let $c_i \geq 0$ and let $Z_n \coloneqq  \sum_{i=1}^n c_i X_i$ be the sum of negatively dependent random variables $\{X_1, \dots X_n\}$ for some $n \in \N$. Then we have $$\Var[Z_n] \leq \sum_{i=1}^{n}c_i^2\Var[X_i]. $$ 
\end{corollary}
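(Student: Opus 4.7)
The plan is to expand the variance into diagonal and off-diagonal contributions and then show the off-diagonal terms are non-positive using negative dependence applied to two-element subsets. Concretely, I would write
\begin{equation*}
\Var[Z_n] \;=\; \sum_{i=1}^{n} c_i^2 \Var[X_i] \;+\; \sum_{i \neq j} c_i c_j \, \Cov(X_i, X_j) \eqpd
\end{equation*}

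The first step is to bound the covariances. Applying the negative dependence inequality from the displayed definition with $S = \{i,j\}$ for $i \neq j$, I get
\begin{equation*}
\bE[X_i X_j] \;\leq\; \bE[X_i]\,\bE[X_j] \eqcom
\end{equation*}
so that $\Cov(X_i, X_j) = \bE[X_i X_j] - \bE[X_i]\bE[X_j] \leq 0$ for every pair $i \neq j$.

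The second step is to combine signs: since $c_i, c_j \geq 0$ by hypothesis, each product $c_i c_j \Cov(X_i, X_j)$ is non-positive. Dropping these non-positive cross-terms from the variance expansion gives
\begin{equation*}
\Var[Z_n] \;\leq\; \sum_{i=1}^{n} c_i^2 \Var[X_i] \eqcom
\end{equation*}
which is the claimed bound.

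There is no real obstacle here: the argument is purely a one-line covariance computation combined with the definition of negative dependence, so the only thing to check carefully is that the definition given in the paper (bound on $\bE[\prod_{i \in S} X_i]$ for arbitrary $S \subset [n]$) specializes correctly to the two-point case needed for covariances, which it does by taking $|S| = 2$.
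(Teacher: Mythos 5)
Your proof is correct and is exactly the ``direct consequence'' the paper has in mind: it states Corollary~\ref{cor:varbound} without proof, and the intended argument is precisely your expansion of $\Var[Z_n]$ into diagonal and cross terms, with $\Cov(X_i,X_j)\leq 0$ obtained from the negative dependence inequality applied to $S=\{i,j\}$ and the non-negativity of the $c_i$ killing the off-diagonal contribution. Nothing further is needed.
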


\section{Lower bounds on the mixing time of the $\sk$ shuffle}\label{sec:lowerbounds}

\subsection{An approximate second moment method}

For the $S_2$ shuffle sharp lower bounds can be obtained using Wilson's Lemma as first introduced in \cite{wilson2004mixing}, and approximate versions of his technique can be found in  \cite{gantert2020mixing} and  \cite{nam2019cutoff}. Here we rely instead on an approximate version of the second moment method originally introduced by Diaconis and Shashahani in~\cite{BL}. To state this approximate second moment method, consider a continuous-time Markov chain $\left(X_{t}\right)_{t \geq 0}$ with generator $\mathcal{A}$ on a finite state space $S$. It is a well known result that for any function $f: S \to \R$ the process $\left(M_{t}\right)_{t \geq 0}$ with
\be\label{condition}
M_{t}\coloneqq f\left(X_{t}\right)-f\left(X_{0}\right)-\int_{0}^{t}(\mathcal{A} f)\left(X_{s}\right) \mathrm{d} s \ \ \mbox{ for all } t \geq 0
\ee
is a martingale. We have the following result on the mixing time of $\left(X_{t}\right)_{t \geq 0}$.
\begin{lemma}\label{lem:gensecondmoment}
Let $\Psi: S \rightarrow \mathbb{R}$  be such that for some $\lambda \geq c>0$, and $R>0$ we have
\begin{equation}
\label{eq:diffbound}
|(-\mathcal{A} \Psi)(y)-\lambda \Psi(y)| \leq c \text { for all } y \in S,
\quad \text{ and } \quad
 \Var[\Psi(X_t)] \leq R \text { for all } t \geq 0\eqpd
\end{equation}
Then for all $\varepsilon \in(0,1)$, the mixing time $\tmix$ of $(X_t)_{t \geq 0}$ satisfies
\be\label{eq:mixingtimelowerbound}
\tmix(1-\varepsilon) \geq \frac{1}{\lambda} \log \left(\|\Psi\|_{\infty}\right)-\frac{1}{2 \lambda}\log\left(\frac{4\max(2R, c)}{\e}\right)\eqpd
\ee
\end{lemma}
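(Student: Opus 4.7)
The plan is to use a Wilson-style distinguishing-statistic argument with $\Psi$ itself as the statistic, replacing an exact eigenrelation by the approximate one in \eqref{eq:diffbound}. Write $\pi$ for the stationary law of $(X_t)_{t\geq 0}$.

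The first step is to control the mean. By Kolmogorov's forward equation, $u(t) := \mathbb{E}_{x_0}[\Psi(X_t)]$ satisfies $u'(t) = \mathbb{E}_{x_0}[(\mathcal{A}\Psi)(X_t)]$. Hypothesis \eqref{eq:diffbound} lets me write $\mathcal{A}\Psi = -\lambda\Psi + E$ with $\|E\|_\infty \leq c$, so $u$ solves the linear ODE $u'(t) + \lambda u(t) = -\mathbb{E}_{x_0}[E(X_t)]$ with forcing bounded by $c$ in absolute value; Duhamel's formula then gives $|u(t) - e^{-\lambda t}\Psi(x_0)| \leq c(1-e^{-\lambda t})/\lambda \leq c/\lambda$. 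Invariance of $\pi$ gives $\mathbb{E}_\pi[\mathcal{A}\Psi] = 0$, so the same approximate eigenrelation applied under $\pi$ yields $|\mathbb{E}_\pi[\Psi]| \leq c/\lambda$.

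The second step is the distinguishing argument. Pick $x_0$ achieving $\Psi(x_0) = \|\Psi\|_\infty$, possibly after replacing $\Psi$ by $-\Psi$ (which preserves both hypotheses). Then the mean separation satisfies $u(t) - \mathbb{E}_\pi[\Psi] \geq e^{-\lambda t}\|\Psi\|_\infty - 2c/\lambda$. Take the threshold $\alpha := \tfrac12(u(t) + \mathbb{E}_\pi[\Psi])$ and the distinguishing event $B := \{\Psi \geq \alpha\}$. Applying Chebyshev's inequality once under $\mathbb{P}_{x_0}$ and once under $\pi$ with the variance bound $R$ gives both $\mathbb{P}_{x_0}(X_t \notin B)$ and $\pi(B)$ bounded by $4R/(u(t) - \mathbb{E}_\pi[\Psi])^2$, hence $\TV{\mathbb{P}_{x_0}(X_t \in \cdot) - \pi} \geq 1 - 8R/(u(t) - \mathbb{E}_\pi[\Psi])^2$. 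Demanding this lower bound to exceed $1 - \varepsilon$, substituting the mean separation, and using elementary inequalities of the form $a + b \leq 2\max(a,b)$ together with the hypothesis $\lambda \geq c$ (which lets the $c/\lambda$ drift be absorbed into a $c/\varepsilon$ term under the square root) reduces matters to the threshold condition $e^{-\lambda t}\|\Psi\|_\infty \geq 2\sqrt{\max(2R, c)/\varepsilon}$; taking logarithms yields the claimed lower bound on $\tmix(1 - \varepsilon)$.

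The principal obstacle I anticipate is the careful bookkeeping of the two error sources --- the additive drift $c/\lambda$ coming from the approximate eigenrelation and the fluctuation-scale $\sqrt{R}$ from Chebyshev --- so as to recover the clean form $\max(2R, c)$ of the statement rather than some messier expression involving $1/\lambda^2$ terms. The role of the hypothesis $\lambda \geq c$ is precisely to keep the mean bias dominated by the exponentially decaying signal $e^{-\lambda t}\|\Psi\|_\infty$; without it the drift correction could overwhelm the signal and the distinguishing argument would collapse before reaching the claimed time scale.
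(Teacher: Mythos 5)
Your proposal is correct and follows essentially the same route as the paper: an ODE/Gronwall (Duhamel) control of $\mathbb{E}[\Psi(X_t)]$ via the approximate eigenrelation, a bound $|\mathbb{E}_\pi[\Psi]|\leq c/\lambda$ on the stationary mean, and then a Chebyshev-based distinguishing-statistic argument using the variance bound $R$, with $\lambda\geq c$ ensuring the drift does not swamp the signal $e^{-\lambda t}\|\Psi\|_\infty$. The only differences are cosmetic: you take the midpoint threshold and use Chebyshev on both laws (and derive the stationary mean bound from invariance), whereas the paper thresholds at $\tfrac12\mathbb{E}[\Psi(X_t)]$, pairs Chebyshev with Markov's inequality for the stationary law, and obtains the stationary mean bound by letting $t\to\infty$ in the Gronwall estimate.
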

\begin{proof}
Let $X_0=\eta$ almost surely for some $\eta \in S$ with $|\Psi(\eta)|=\|\Psi\|_{\infty}$. Let $\mu$ denote the stationary distribution of $\left(X_t\right)_{t \geq 0}$, and $X_{\infty} \sim \mu$. By \eqref{eq:diffbound} and the martingale  $(M_t)_{ t \geq 0}$, with $f\coloneqq\mathbb{E}\left[\Psi\left(X_t\right)\right]$ for all $t \geq 0$, we get
$$
f^{\prime}(t)=\mathbb{E}\left[(\mathcal{A} \Psi)\left(X_t\right)\right] \in[-\lambda f(t)-c,-\lambda f(t)+c] \text { for all } t \geq 0\eqpd
$$
Applying Gronwall's lemma yields
$$
f(t) \leq f(0) e^{-\lambda t}+\int_0^t c e^{-\lambda(t-s)} \mathrm{d} s \leq f(0) e^{-\lambda t}+\frac{c}{\lambda} \text { for all } t \geq 0 \eqcom
$$
and it follows that 
\be\label{eq:gronwallbound}
\left|f(t)-e^{-\lambda t} f(0)\right| \leq \frac{c}{\lambda} 
\ee
holds for all $t \geq 0$, by applying Gronwall's lemma to $-f$. Take $t$ equal to the right hand side of \eqref{eq:mixingtimelowerbound}. As a lower bound on the expectation of $\Psi$, we have
$$
\mathbb{E}\left[\Psi\left(X_t\right)\right] \geq e^{-\lambda t} \Psi\left(X_0\right)-\frac{c}{\lambda}=e^{-\lambda t}\|\Psi\|_{\infty}-\frac{c}{\lambda} \geq\frac{1}{2} e^{-\lambda t}\|\Psi\|_{\infty} \eqcom
$$
where the last inequality is due to the fact that we require $\lambda \geq c >0$, and our choice of $t$.

By taking $t \rightarrow \infty$ in \eqref{eq:gronwallbound}, we see that $\left|\mathbb{E}\left[\Psi\left(X_{\infty}\right)\right]\right| \leq c / \lambda$, using $\operatorname{Var}\left[\Psi\left(X_{\infty}\right)\right] \leq R$.
To bound the total-variation distance, letting  $P^\eta_t$ be the law of $X_t$ started from $\eta$, we get
\begin{align}
\begin{split}\label{eq:fancycheby}
\TV{ P^\eta_t - \mu } & \geq \mathbb{P}\left(\Psi\left(X_t\right) \geq \frac{1}{2} \mathbb{E}\left[\Psi\left(X_t\right)\right]\right)-\mathbb{P}\left(\Psi\left(X_{\infty}\right) \geq \frac{1}{2} \mathbb{E}\left[\Psi\left(X_t\right)\right]\right) \\
& \geq 1-4 \frac{\operatorname{Var}\left(\Psi\left(X_t\right)\right)}{\mathbb{E}\left[\Psi\left(X_t\right)\right]^2}-4 \frac{\operatorname{Var}\left(\Psi\left(X_{\infty}\right)\right)+\mathbb{E}\left[\Psi\left(X_{\infty}\right)\right]^2}{\mathbb{E}\left[\Psi\left(X_t\right)\right]^2} \eqpd
\end{split}
\end{align}
Here the last line follows from Chebyshev's inequality. By Markov's inequality
$$
\mathbb{P}\left(\Psi\left(X_{\infty}\right) \geq \frac{1}{2} \mathbb{E}\left[\Psi\left(X_t\right)\right]\right) \leq \mathbb{P}\left(\Psi\left(X_{\infty}\right)^2 \geq \frac{1}{4} \mathbb{E}\left[\Psi\left(X_t\right)\right]^2\right) \leq 4 \frac{\mathbb{E}\left[\Psi\left(X_{\infty}\right)^2\right]}{\mathbb{E}\left[\Psi\left(X_t\right)\right]^2} . 
$$
Substituting $t$ from \eqref{eq:fancycheby} yields the desired result.
\end{proof}

\subsection{A lower bound from the generalized second moment method}
In the following we prove a lower bound  on the mixing time for the $\sk$ shuffle with and without boundaries, which gives the lower bounds on the mixing time in Theorems \ref{thm:Precutoff} and \ref{thm:cutoff}. Recall that
\begin{align}\nonumber
    \Phi^{(j)}_N(\sigma)\coloneqq \sum_{x=1}^{N-1} h_{\sigma}^{\prime}(x)\psi_j(x) \quad \text{ where } \quad \psi_j(x) \coloneqq \sin \left(\frac{ x j\pi}{N}\right)
\end{align}
and, recalling the height function $(h_t^{\prime})_{t\geq 0}$ from Section \ref{sec:prelim}, we set
\be\label{eq:eigenfunctiontime}
\Phi_{N,t}^{(j)} \coloneqq  \sum_{x=1}^{N-1} h_{t}^{\prime}(x)\psi_j(x) 
\ee
for all $t\geq 0$. In the following, we use $ x \sim y$ to denote that $x$ is of order $y$.
\begin{lemma}\label{lem:wilsonlowerboundsk} Let $\mathcal{L}$ and $\lambda_{N,k}$ be as defined in \eqref{eq:skgenwithboundary} and \eqref{eq:eigenvaluebound}, and let $k = o(N^{3/4})$. Then for all $\sigma \in S_N$
\be\label{operatorbound2}
|(-\mathcal{L} \Phi_N^{(1)})(\sigma)-\lambda^{(1)}_{N,k} \Phi_N^{(1)}(\sigma)| \leq c
\ee
holds for $c \sim k^6\pi^3N^{-3}$, $R \sim N^3$,  and  $\|\Phi_N\|_{\infty} \sim N^2$. \end{lemma}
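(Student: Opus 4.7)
The lemma gathers three estimates that serve as hypotheses for Lemma~\ref{lem:gensecondmoment} (the approximate second moment method). Reading \eqref{eq:skgenwithboundary}--\eqref{eq:eigenvaluebound} literally, I interpret the operator in the displayed inequality as the boundary generator $\tilde{\mathcal{L}}$, since this is the setting in which $\Phi_N^{(1)}$ is an approximate eigenfunction with approximate eigenvalue $\lambda^{(1)}_{N,k}$. My plan is to verify the three bounds separately --- using Lemma~\ref{lem:ApproxEigen} for the eigenfunction estimate, Corollaries~\ref{cor:skstronglyrayleigh} and~\ref{cor:varbound} for the variance, and a Riemann-sum computation for the sup-norm --- and then plug into Lemma~\ref{lem:gensecondmoment} to extract the lower bound.

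For the sup-norm, $|h_\sigma(x,\lfloor N/2\rfloor)|\le \min(x,N-x)/2$ by counting and $|\psi_1(x)|\le 1$, giving $\|\Phi_N^{(1)}\|_\infty=O(N^2)$; the matching lower bound follows by evaluating at $\sigma=\mathrm{id}$ (where $h_{\mathrm{id}}(x,\lfloor N/2\rfloor)=\min(x,N-x)/2$) and approximating by the integral
\begin{equation*}
\Phi_N^{(1)}(\mathrm{id}) \;=\; \tfrac{N^2}{2}\!\int_0^1\! \sin(\pi u)\min(u,1-u)\,\mathrm{d}u + O(N) \;=\; N^2/\pi^2 + O(N).
\end{equation*}
For the variance, I swap the order of summation to write
\begin{equation*}
\Phi_N^{(1)}(\zeta) \;=\; \sum_{z=1}^{N-1} c_z\,\mathbf{1}\{\zeta(z)\le\lfloor N/2\rfloor\} - C, \qquad c_z \coloneqq \sum_{x=z}^{N-1}\psi_1(x),
\end{equation*}
so that $\Phi_N^{(1)}(\zeta_t)$ is an affine function of the projection $\xi_t^{\lfloor N/2\rfloor}$ of Section~\ref{sec:skprojection}. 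By Corollary~\ref{cor:skstronglyrayleigh} this projection is strongly Rayleigh at all times, hence its coordinates are negatively dependent; Corollary~\ref{cor:varbound} then gives $\Var[\Phi_N^{(1)}(\zeta_t)] \le \tfrac14\sum_z c_z^2$. A summation by parts produces $c_z=(N/\pi)(1+\cos(z\pi/N))+O(1)=O(N)$ uniformly in $z$, whence $R=O(N^3)$.

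The approximate-eigenfunction bound is Lemma~\ref{lem:ApproxEigen} specialized to $j=1$ and $y=\lfloor N/2\rfloor$, which directly yields $|(-\tilde{\mathcal{L}}\Phi_N^{(1)})(\sigma)-\lambda^{(1)}_{N,k}\Phi_N^{(1)}(\sigma)|\le Ck^6/N^3$ uniformly in $\sigma$. The mechanism there is that the boundary rates $\delta_i^{(k)}$ of \eqref{eq:deltaWeights} are engineered so that the linear part of the Taylor expansion $\psi_1(y)=y\pi/N-y^3\pi^3/(6N^3)+O(y^5/N^5)$ cancels exactly at each boundary position via the polynomial identity encoded in \eqref{eq:Ingrid1}; only the cubic remainder survives, and after summing over the $O(k)$ boundary positions and multiplying by $|h_\sigma(x)|\le x\le k$ the bound $c=O(k^6\pi^3/N^3)$ emerges as claimed. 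The main technical point is thus contained in Lemma~\ref{lem:ApproxEigen}, and the rest is mechanical assembly. With $c,R,\|\Phi_N^{(1)}\|_\infty$ in hand, Lemma~\ref{lem:gensecondmoment} yields
\begin{equation*}
\tmixb(1-\varepsilon) \;\ge\; \frac{1}{\lambda^{(1)}_{N,k}}\log\|\Phi_N^{(1)}\|_\infty - \frac{1}{2\lambda^{(1)}_{N,k}}\log\frac{32R}{\varepsilon} \;=\; \frac{\log N}{2\lambda^{(1)}_{N,k}}(1+o(1)) \;=\; \frac{6\,N^2\log N}{k(k^2-1)\pi^2}(1+o(1)),
\end{equation*}
giving the lower bound of Theorem~\ref{thm:cutoff}; the corresponding lower bound for $\tmix$ in Theorem~\ref{thm:Precutoff} follows either by the coupling between $\mathcal{L}$ and $\tilde{\mathcal{L}}$ or by repeating the above with the same three ingredients for $\mathcal{L}$ in place of $\tilde{\mathcal{L}}$. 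The hypothesis $k=o(N^{3/4})$ is used only to guarantee the condition $\varepsilon c^2\le 2\lambda^2 R$ required by Lemma~\ref{lem:gensecondmoment} at the target time, since $c^2/(\lambda^2 R)\sim k^6/N^5 = o(1)$ whenever $k=o(N^{5/6})$.
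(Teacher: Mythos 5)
Your proposal follows essentially the same route as the paper: the constant $c$ is obtained from Lemma~\ref{lem:ApproxEigen} with $j=1$, the variance bound $R\sim N^3$ comes from rewriting $\Phi_N^{(1)}$ as a nonnegative linear combination of the indicators of the projection to the first $\lfloor N/2\rfloor$ cards and invoking Corollaries~\ref{cor:skstronglyrayleigh} and~\ref{cor:varbound}, and the sup-norm is the same order-$N^2$ estimate. The only (harmless) differences are that you bound the coefficients $c_z=\sum_{x\geq z}\psi_1(x)$ uniformly by $O(N)$ where the paper bounds the corresponding squared coefficients by $m^2$, and that you also spell out the subsequent plug-in to Lemma~\ref{lem:gensecondmoment}, which in the paper is carried out in the following proof of the lower bounds of the theorems rather than inside this lemma.
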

\begin{proof}
By Lemma \ref{lem:ApproxEigen} and Lemma \ref{lem:gensecondmoment}, we have that $c \sim k^6\pi^3N^{-3}$. Thus, it suffices to bound the variance for the approximate eigenfunction $\Phi_{N,t}^{(1)}$. Note that the initial distribution starting from the identity is strongly Rayleigh, and thus by Corollary~\ref{cor:skstronglyrayleigh} so is the distribution of the projection of the $\sk$ shuffle with boundaries on the first $N/2$ cards.
Let $(X_1^t, \dots X^t_N)$ be the projection of the $\sk$ shuffle with boundaries, where $X_i^t$ is the indicator function that the card at position $i$ has label at most $N/2$ at time~$t$. Then by Corollary \ref{cor:varbound}
\begin{align}
    \begin{split}
        \Var(\Phi_{N,t}^{(1)}) &= \Var\left(\sum_{x=1}^{N-1} h_{t}^{\prime}(x)\psi_j(x)\right) = \Var\left(\sum\limits_{m=1}^{N-1}\left(\sum\limits_{i=m+1}^{N}\psi_j(i)\right)X_m^t\right)\\
        &\leq \sum\limits_{m=1}^{N-1}\left(\sum\limits_{i=m+1}^{N}\psi_j(i)\right)^2\Var(X_m^t) \leq \sum\limits_{m=1}^{N-1}m^2\Var(X_m^t) \leq N^3
    \end{split}
\end{align}
allowing us to conclude. 
\end{proof}

\begin{proof}[Proof of the lower bounds in Theorems \ref{thm:Precutoff} and \ref{thm:cutoff}]
Combining Lemma \ref{lem:ApproxEigen} and \ref{lem:wilsonlowerboundsk} gives the desired lower bound on the mixing time for the $\sk$ shuffle with boundaries in Theorem~\ref{thm:cutoff}. To see that the corresponding lower bounds holds also for the $\sk$ shuffle without boundaries, note that the function 
\begin{equation}
    \sigma \mapsto \sum_{x=1}^{N-1} h_{\sigma}(x)\psi_1(x)
\end{equation} is increasing with respect to the partial order $\succeq$ defined in \eqref{def:PartialOrder}. Thus, the Lemma~\ref{lem:CensoringSk} and Remark \ref{rem:Censor}, treating the $\sk$ shuffle as an $\sk$ shuffle with boundaries and censoring
\begin{equation}
  \E\left[ \sum_{x=1}^{N-1} h_t(x)\psi_1(x) \right]   \geq  \E\left[ \sum_{x=1}^{N-1} h^{\prime}_t(x)\psi_1(x) \right] 
\end{equation} for all $t \geq 0$. The lower bound on the mixing times of the $\sk$ shuffle without boundaries follows from Chebyshev's inequality using Corollary \ref{cor:skstronglyrayleigh} and the same arguments as in Lemma \ref{lem:wilsonlowerboundsk} to bound the variance of the function $\Phi_{N,t}^{(1)}$ for the $\sk$ shuffle with boundaries.
\end{proof}

\begin{remark}\label{rem:lowerBound}
    Note that we in fact showed that the lower bound on the mixing time in Theorems \ref{thm:Precutoff} and \ref{thm:cutoff} remains valid for all $k=o(N^{3/4})$.
\end{remark}

\section{Upper bounds on the mixing time}\label{sec:generalupperbound}
\subsection{A general coupling for the $\sk$ shuffle}
\label{sec:monotonecoupling} In this section, we provide an upper bound on the mixing time of the $\sk$ shuffle. In contrast to our specific choice of boundary conditions in \eqref{eq:deltaWeights}, we allow in the following  for more general choices of the parameters $(\delta^{(k)}_i)$.  \\

We start by defining a coupling for the $\sk$ shuffle with boundaries. Let $(\zeta_t)_{t\geq0}$ and $(\zeta^{\prime}_t)_{t\geq0}$ denote the $\sk$ shuffles started from $\zeta,\zeta^{\prime} \in \mathcal{S}_N$, respectively. 
For both $\sk$ shuffles, we will use the same Poisson clocks, i.e. when we update an interval $[x,x+j]$ for some $x$ and $j$ in $(\zeta_t)_{t\geq0}$ at some time $s \geq 0$, we do the same in the process $(\zeta^{\prime}_t)_{t\geq0}$. Suppose that a clock associated with an interval $[x, x+j]$ rings at time $s$. Let $I_{x,s}\subseteq [N]$ be the set of labels for which both configurations agree at time $s$. For these $|I_{x,t}|$ cards, select $|I_{x,t}|$ of the $j+1$ positions in the interval $[x, x+j]$ uniformly at random, and assign the cards in both $\zeta_s$ and $\zeta^{\prime}_s$ whose labels are in $I_{x,t}$ to these positions. On the remaining $(j+1)-|I_{x,t}|$ positions, we distribute the cards in both configurations $\zeta_s$ and $\zeta^{\prime}_s$ uniformly at random and independently. \\

We refer to this as the \textbf{canonical coupling} for the $\sk$ shuffle, and write $\mathbf{P}$ for the joint law of $(\zeta_t)_{t\geq0}$ and $(\zeta^{\prime}_t)_{t\geq0}$ under this coupling. Let $Z_{i,t}$ and $Z^{\prime}_{i,t}$ denote the positions of the cards labeled $i$ in the configurations $\zeta_t$ and $\zeta_t^{\prime}$ respectively. Moreover, let $\tau_{i}$ be the first time at which the cards of label $i$ are located at the same position in both shuffles, and note that the cards of label $i$ occupy the same position for all $s \geq \tau_i$. The next proposition states an upper bound on the mixing time of the $\sk$ shuffle with and without boundaries.

\begin{proposition}
\label{prop:genupperbound}
Suppose that $\delta^{(k)}_i=\delta^{(k)}_{N-i}\in [0,1]$ for every  $i \in [k]$, and assume that $k=o(N^\frac{2}{3})$. Then there exists an absolute constant $C>0$ such that for all $\sigma, \sigma^{\prime} \in \mathcal{S}_N$, and all $t \geq CN^2k^{-3}\log(N)$, we have that for all $N$ sufficiently large
\begin{equation}\label{eq:MixingUpperProp}
\TV{\mathbf{P}(\zeta^{\prime}_t \in \cdot \, \mid \zeta^{\prime}_0 = \sigma^{\prime} )-\mathbf{P}(\zeta_t \in \cdot \, \mid \zeta_0 = \sigma )}\leq N^{-1} \eqpd
\end{equation}
For the $\sk$ shuffle with boundary rates $(\delta^{(k)}_i)$ from \eqref{eq:deltaWeights}, the upper bound on the total-variation distance in \eqref{eq:MixingUpperProp} continues to hold for all $k=o(N)$ and $t \geq CN^2k^{-3}\log(N)$.
\end{proposition}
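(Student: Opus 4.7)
The plan is to bound the total-variation distance by the coupling probability $\mathbf{P}(\zeta_t\neq\zeta'_t)$ under the canonical coupling. Since $\zeta_t=\zeta'_t$ iff the integer-valued height difference $\Delta_t(x,y)\coloneqq h_{\zeta_t}(x,y)-h_{\zeta'_t}(x,y)$ vanishes for all $(x,y)$, it suffices to show $\Delta_t\equiv 0$ with probability at least $1-N^{-1}$ at time $t=C_0 N^2 k^{-3}\log N$, for a sufficiently large constant $C_0$.

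First I would observe that under the canonical coupling, every card in a firing block $[x,x+k-1]$ is marginally uniformly distributed over the $k$ positions: for cards with labels in $I_{x,t}$ this is built into the construction, and for the remaining cards it follows from the uniform-and-independent placement. Hence, conditionally on the block $[x,x+k-1]$ firing, $\bE[h_{t^+}(p,y)\mid\mathcal{F}_t]$ for $p\in[x,x+k-1]$ equals the linear interpolation between $h_t(x-1,y)$ and $h_t(x+k-1,y)$; subtracting the same identity for $h'_t$ shows that $\bE[\Delta_t(x,y)]$ satisfies the same discrete-Laplacian-like dynamics whose approximate eigenstructure was established in Lemma~\ref{lem:ApproxEigen}. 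Expanding in the sine basis $\hat\Delta_t(j,y)\coloneqq \sum_{x=1}^{N-1}\Delta_t(x,y)\psi_j(x)$ and invoking Lemma~\ref{lem:ApproxEigen} together with Gronwall's lemma yields
\[ \bigl|\bE[\hat\Delta_t(j,y)]\bigr| \;\leq\; e^{-\lambda_{N,k}^{(j)} t}\,|\hat\Delta_0(j,y)| \;+\; \frac{C\,k^6 j^3}{\lambda_{N,k}^{(j)}\,N^3}\,. \]
Since $|\hat\Delta_0(j,y)|=O(N^2)$ and $\lambda_{N,k}^{(j)}\sim k^3 j^2 N^{-2}$, the exponential term is tiny once $C_0$ is large; the Gronwall error is of order $k^3 j/N$, and this is where the hypothesis on $k$ enters.

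To upgrade the mean decay to a pointwise statement I would combine Corollary~\ref{cor:skstronglyrayleigh} with Corollary~\ref{cor:varbound}. By the strong Rayleigh property of each projection onto any label threshold $y$, the indicators $(\mathds{1}\{\zeta_t(z)\leq y\})_z$ are negatively associated, so each marginal satisfies $\Var[h_{\zeta_t}(x,y)]=O(x)$, and likewise for $\zeta'_t$. Under the canonical coupling the two processes become increasingly correlated, and a careful second-moment analysis -- showing that $\bE[\Delta_t(x,y)^2]$ also obeys a heat-equation-like contraction driven by the same approximate eigenvalues -- yields $\sum_{x,y}\bE[|\Delta_t(x,y)|] \leq N^{-1}$ at the chosen time. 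By Markov's inequality and integrality of $\Delta_t$, this forces $\P(\Delta_t\neq 0)\leq N^{-1}$, equivalently the coupling bound \eqref{eq:MixingUpperProp}.

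The main obstacle is the quantitative variance control under the coupling: while each marginal variance is immediate from the strong Rayleigh property, the contractive effect of the coupling on $\Var[\Delta_t]$ requires tracking second-moment dynamics in which bilinear cross terms between $\zeta_t$ and $\zeta'_t$ must be handled. A secondary delicate point is the balance between the approximation error $O(k^6 j^3 N^{-3})$ in Lemma~\ref{lem:ApproxEigen} and the eigenvalue $\lambda_{N,k}^{(j)}\sim k^3 j^2 N^{-2}$: for generic boundary rates $\delta_i^{(k)}\in[0,1]$ the Gronwall error $k^3 j/N$ is controllably small only when $k=o(N^{2/3})$, whereas the specific choice \eqref{eq:deltaWeights} produces the cancellations visible in \eqref{eq:Ingrid1}--\eqref{eq:Ingrid3} that remove the leading-order boundary Taylor term and extend the range to $k=o(N)$. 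One should also note that, unlike the $k=2$ case, the canonical coupling is \emph{not} monotone with respect to the partial order $\succeq$, so the argument must work directly with $\Delta_t$ rather than sandwiching between extremal configurations.
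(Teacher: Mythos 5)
There is a genuine gap, and it sits exactly where you flag ``the main obstacle'': your argument needs $\sum_{x,y}\bE[|\Delta_t(x,y)|]\leq N^{-1}$, i.e.\ control of the \emph{first absolute moment} of the coupled height difference, but the Fourier/Gronwall machinery only controls $|\bE[\Delta_t(x,y)]|$. Passing from the mean to $\bE[|\Delta_t|]$ requires showing that the fluctuations of $\Delta_t$ around its (small) mean vanish, which under the canonical coupling happens only through actual coalescence of cards; there is no closed linear ``heat-equation-like contraction'' for $\bE[\Delta_t(x,y)^2]$, because the cross terms between $\zeta_t$ and $\zeta'_t$ depend on the joint law created by the coupling and do not reduce to the approximate eigenstructure of Lemma~\ref{lem:ApproxEigen}. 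The strong Rayleigh bounds (Corollaries~\ref{cor:skstronglyrayleigh} and~\ref{cor:varbound}) give marginal variances of order up to $N$ for each copy, which is the equilibrium order of magnitude and nowhere near the $N^{-3}$-per-site smallness your Markov-plus-integrality step would need; even for the mean alone, the sharpest available estimate (Lemma~\ref{lem:upperbexpectedheight}) carries an additive error of order $k^3$, not $o(1)$. A second, independent problem is that Lemma~\ref{lem:ApproxEigen} is proved only for the specific rates \eqref{eq:deltaWeights} -- the cancellation \eqref{eq:Ingrid1} is exactly what those rates buy -- so for generic $\delta_i^{(k)}\in[0,1]$ (which is part of the statement of Proposition~\ref{prop:genupperbound}, and is the case actually needed later when the censored blocks carry truncated boundary rates) you have no approximate eigenfunction estimate at all; your claim that the Gronwall error is then ``$k^3 j/N$ and controllably small for $k=o(N^{2/3})$'' is not backed by anything in the paper and, after dividing by $\lambda^{(j)}_{N,k}\sim k^3j^2N^{-2}$, would in any case produce an error of order $N/j$ in the $j$-th Fourier coefficient, which does not reconstruct to anything small.

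The paper's proof avoids height functions entirely and is a card-by-card coalescence argument: under the canonical coupling, the position $Z_{1,t}$ of a single card is a martingale away from the boundary blocks, with quadratic variation growing at rate of order $k^3$; Lemmas~\ref{lem:ExitBoundaryDeltaZero}--\ref{lem:ExpectedTimeBoundary} show the card leaves the $O(k)$-neighborhood of the boundary quickly, Lemma~\ref{lem:MartingaleInterior} (optional stopping plus the quadratic-variation bound) shows it reaches the bulk in time of order $N^2k^{-3}$ with probability bounded below, and then the two coupled copies of a given label coalesce with constant probability before returning to the boundary. Iterating this constant-probability event over order $\log N$ epochs and taking a union bound over the $N$ labels yields \eqref{eq:MixingUpperProp}; the two regimes $k=o(N^{2/3})$ versus $k=o(N)$ enter only through the boundary exit-time estimates, not through any spectral input. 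If you want to salvage your route, you would have to supply a genuine coalescence mechanism (e.g.\ a per-card coupling estimate of this type) rather than a moment bound on $\Delta_t$; as written, the central step is asserted rather than proved.
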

The proof of Proposition \ref{prop:genupperbound} will be split in two main  parts. First, we investigate the time it takes for a single card to leave the sites $[4k]$, respectively $\{N-4k,\dots,N \}$, close to the boundary. In a second step, we establish tail estimates the coalescence time $\tau_i$ for cards of label $i$, and obtain the desired upper bound on the mixing time by a union bound.

\subsection{An estimate on the exit time from the boundary}

Consider in the following the positions $(Z_{1,t})_{t \geq 0}$ and $(Z^{\prime}_{1,t})_{t \geq 0}$ of the cards of label $1$. We denote by $P_{x}$, respectively $P^{\prime}_{x}$, the law of the cards of label $1$ when starting the cards from position $x\in [N]$ in $\zeta_0$ and $\zeta^{\prime}_0$, respectively. For all $y\in [N]$, let $\tilde{\tau}_{>y}$ and $\tilde{\tau}_{<y}$ 
be defined as
\begin{equation}
\tilde{\tau}_{>y} := \inf\{ t \geq 0 \colon Z_{1,t}>y \} \quad \text{ and } \quad \tilde{\tau}_{<y} := \inf\{ t \geq 0 \colon Z_{1,t}<y \}\eqcom
\end{equation} i.e.\ the first time at which the card of label $1$ in $(\zeta_t)_{t \geq 0}$ reaches a position larger than $y$, respectively smaller than $y$. For the following three lemmas, we assume that $k=o(N)$.
\begin{lemma}\label{lem:ExitBoundaryDeltaZero} Let $\delta^{(k)}_i=\delta^{(k)}_{N-i} \in [0,1]$ for every  $i \in [k]$. Then there exist absolute constants $c,C>0$ such that for all  $x \in [N]$, we have that for all $N$ sufficiently large
\begin{equation}
P_{x}\left(\tilde{\tau}_{>4k} > C \right) \leq c \eqpd
\end{equation}
\end{lemma}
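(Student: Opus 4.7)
The case $x > 4k$ is immediate since then $\tilde{\tau}_{>4k} = 0$. In what follows assume $x \in [1, 4k]$ and $N \ge 6k$, which holds for $N$ sufficiently large under $k = o(N)$.

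The plan is to bound $\E_x[\tilde{\tau}_{>4k}]$ by an absolute constant and then invoke Markov's inequality. The key observation is that the position $(Z_{1,t})_{t \ge 0}$ is itself a continuous-time Markov chain on $[N]$: at position $y$, each bulk block $[a, a+k-1]$ containing $y$ fires at rate $1$, and each boundary block $[1,i]$ (respectively $[N-i+1,N]$) containing $y$ fires at rate $\delta^{(k)}_i \in [0, 1]$, relocating the card to a uniform sample on the block.

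Denoting by $Y_a$ a uniform sample on the block firing at rate $r_a$, introduce the drift and the carr\'e du champ for the identity,
\[
\mu(y) := \sum_{a} r_a\bigl(\E[Y_a] - y\bigr), \qquad \Gamma(y) := \sum_{a} r_a\, \E\bigl[(Y_a - y)^2\bigr].
\]
Two short computations do the work. First, $\mu(y) = 0$ for $y$ in the bulk $[k, N-k+1]$, and for $y \in [1, k-1]$ the bulk contribution $y(k-y)/2$ dominates the worst-case negative boundary contribution (by a case analysis on $y \le k/2$ versus $y > k/2$ under $\delta^{(k)}_i \in [0,1]$), so that $\mu(y) \ge 0$ throughout the left half of the deck. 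Second, $\Gamma(y) \ge c_0 k^2$ uniformly in $y$ for some absolute $c_0 > 0$; the worst case is $y = 1$, where only $[1, k]$ contributes and $\Gamma(1) \ge (k^2-1)/12 + (k-1)^2/4 \ge k^2/8$ for $k \ge 2$.

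The generator of $(Z_{1,t})$ applied to $f(z) = z^2$ equals $\Gamma(y) + 2 y \mu(y)$, hence is at least $c_0 k^2$ on the range $[1, 5k-1] \subseteq [1, N-k+1]$. Setting $\tau := \tilde{\tau}_{>4k} \wedge T$ for any $T > 0$, optional stopping (Dynkin's identity for stopped CTMCs) then gives
\[
\E_x\!\left[Z_{1,\tau}^2\right] \ge x^2 + c_0 k^2\, \E_x[\tau].
\]
On the other hand, the jumps of $(Z_{1,t})$ have size at most $k-1$, so $Z_{1,\tau} \le 5k - 1$ deterministically and thus $\E_x[Z_{1,\tau}^2] \le 25 k^2$. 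Rearranging gives $\E_x[\tilde{\tau}_{>4k} \wedge T] \le 25/c_0$ uniformly in $T$, and monotone convergence yields $\E_x[\tilde{\tau}_{>4k}] \le 25/c_0 =: C_0$. Markov's inequality finishes the argument with $c := 1/2$ and $C := 2 C_0$.

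The step requiring the most care will be the non-negativity of $\mu$ on $[1, k-1]$: the plan is to split along the sign of the per-boundary-block contribution $(i+1)/2 - y$, isolate the worst case $\delta^{(k)}_i = 1$ on the negative-contribution range $i \in [y, 2y-2]$, and verify that the worst-case total drift simplifies to $y(2k-3y+1)/4 \ge 0$ (for $y \le k/2$) or $(k-y)(k-y+1)/4 \ge 0$ (for $y > k/2$). This is the only point where the assumption $\delta^{(k)}_i \le 1$ is used.
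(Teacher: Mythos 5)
Your argument is correct, but it takes a genuinely different route from the paper. The paper's proof is a step-by-step probabilistic iteration: starting (by the canonical coupling, WLOG) from position $1$, the rate-$1$ update of the block $[k]$ pushes the card past $k/2$ within time $2$ with probability at least $3/4$, and then one iterates over milestones $mk/4$, $m\in[2,16]$, using that the next update involving the card is, with probability bounded below uniformly in $k$, initiated by a block starting far enough to the right. You instead exploit that the tagged card's position $(Z_{1,t})_{t\ge0}$ is an autonomous Markov chain (a uniform shuffle of a block places the card at a uniform position of that block, independently of the rest of the deck), compute its drift and carr\'e du champ, and run a Dynkin/optional-stopping argument for $f(z)=z^2$ on the stopped chain, concluding via Markov's inequality. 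Your drift computation checks out: the bulk contribution at $y<k$ is $y(k-y)/2$, the worst-case boundary contribution (all negative-sign rates set to $1$, positive ones to $0$) is $-y(y-1)/4$ for $y\le (k+1)/2$ and the truncated sum otherwise, giving totals $\tfrac{y}{4}(2k-3y+1)\ge 0$ and $\tfrac{1}{4}(k-y)(k-y+1)\ge 0$; and $\Gamma(y)\ge (k^2-1)/12\ge k^2/16$ uniformly on $[1,4k]$ since every bulk block containing $y$ contributes at least its variance. The payoff of your route is that it delivers the stronger conclusion $E_x[\tilde{\tau}_{>4k}]\le C$ uniformly in $x$ directly (which the paper only extracts afterwards, en route to Lemma~\ref{lem:ExpectedTimeBoundary}), and it avoids the monotonicity reduction to $x=1$; the cost is the boundary-drift case analysis, which is precisely where the hypothesis $\delta^{(k)}_i\le 1$ enters, whereas the paper's iteration uses that hypothesis only to bound the total boundary update rate. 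Two small points to make explicit if you write this up: justify the Markov property of the single-card position, and note that Dynkin's formula applies because the state space is finite, $z^2$ is bounded, and the stopping time is truncated at $T$ before letting $T\to\infty$.
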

\begin{proof}
Using the canonical coupling, we can assume without loss of generality that $x=1$. Since the interval $[k]$ is updated at rate $1$, and the cards are assigned to a position chosen uniformly at random, we see that $P_1(\tilde{\tau}_{>k/2} > 2) \leq  \frac{1}{4}$. Next, since all boundary rates are bounded by $1$ by our assumptions, note that the event that the first update involving card~$1$ after time $s_1$ is initiated by an interval $[j,j+k-1]$ for some $j > k/4$ has positive probability uniformly in $k$. Thus, we get that for some absolute constants $c_1,c_2>0$
\begin{equation}
    P_1(\tilde{\tau}_{>k/2}  > c_1 \, |  s_1 \leq  2) \leq c_2 \eqpd
\end{equation}
Iterating this argument for $\tilde{\tau}_{>mk/4}$ with $m\in [2,16]$, we conclude.
\end{proof}

We have a similar statement for the $\sk$ shuffle with boundary rates $(\delta_i^{(k)})$ from \eqref{eq:deltaWeights}. 
\begin{lemma}\label{lem:ExitBoundaryDeltaSuitable} Let $(\delta^{(k)}_i)$ be defined as in \eqref{eq:deltaWeights}. Then there exist absolute constants $c,C>0$ such that for all  $x \in [N]$, we have that for all $N$ sufficiently large
\begin{equation}
P_{x}\left(\tilde{\tau}_{>4k} > \frac{C}{k} \right) \leq c \eqpd
\end{equation}
\end{lemma}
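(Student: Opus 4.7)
The plan is to run a martingale argument on the position of the marked card using the test function $f(x)=x^2$, leveraging that the choice \eqref{eq:deltaWeights} endows the generator with an $\Theta(k^3)$ diffusivity all the way down to the boundary. If $x>4k$ the claim is vacuous, so I may restrict to $x\in[1,4k]$; the case $x$ near $N$ is handled by the left-right symmetry of \eqref{eq:deltaWeights}. Write $Z_t:=Z_{1,t}$ and $\tau:=\tilde{\tau}_{>4k}$. Then $(Z_t)_{t\geq 0}$ is itself a continuous-time Markov chain on $[N]$ whose generator $\mathcal{A}_Z$ at state $x\in[1,k-1]$ sends $x$ to a uniform element of $[j,j+k-1]$ at rate $1$ for each $j\in[1,x]$ (bulk updates), and to a uniform element of $[1,j]$ at rate $\delta_j^{(k)}$ for each $j\in[x,k-1]$ (left-boundary updates); the right-boundary updates are inactive since $4k\ll N-k$.

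The core step is the pointwise generator estimate
\be\label{eq:planGen}
(\mathcal{A}_Z f)(x)\;\geq\; c_1\, k^3\qquad\text{for all }x\in[1,4k],
\ee
for some absolute $c_1>0$. For $x\in[k,4k]$ this is immediate: $x$ sits in exactly $k$ bulk blocks, each contributing variance $(k^2-1)/6$, so $(\mathcal{A}_Z f)(x)=k(k^2-1)/6$. For $x\in[1,k-1]$ the formula \eqref{eq:deltaWeights} simplifies to $\delta_j^{(k)}=(k^2+3j^2-1)/(4j^2-1)$, and in particular $\delta_j^{(k)}\,j^2=\Theta(k^2)$ uniformly in $j\in[2,k-1]$. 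I would then verify \eqref{eq:planGen} by directly summing the bulk contribution $\sum_{j=1}^{x}[j^2+(k-1)j+(k-1)(2k-1)/6-x^2]$ and the boundary contribution $\sum_{j=x}^{k-1}\delta_j^{(k)}[(j+1)(2j+1)/6-x^2]$, noting that $\sum_{j=x}^{k-1}\delta_j^{(k)}(j+1)(2j+1)/6$ is of order $k^3/6$ and dominates the remaining terms.

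Assuming \eqref{eq:planGen}, Dynkin's formula gives that $M_t:=f(Z_t)-\int_0^t(\mathcal{A}_Z f)(Z_s)\,\mathrm{d}s$ is a martingale, so optional stopping at the bounded time $t\wedge\tau$ yields
\be
\E\bigl[Z_{t\wedge\tau}^2\bigr] \;=\; x^2+\E\!\left[\int_0^{t\wedge\tau}(\mathcal{A}_Z f)(Z_s)\,\mathrm{d}s\right] \;\geq\; c_1\, k^3\cdot \E[t\wedge\tau].
\ee
At time $\tau$ the card can cross $4k$ only via a bulk update on some block $[j,j+k-1]$ with $j\leq Z_{\tau^-}\leq 4k$, since boundary updates preserve $[1,k-1]$; hence $Z_\tau\leq j+k-1\leq 5k-1$, and therefore $\E[Z_{t\wedge\tau}^2]\leq 25k^2$. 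Combining the two bounds gives $\E[t\wedge\tau]\leq 25/(c_1k)$. Choosing $t=C/k$ with $C:=100/c_1$ together with Markov's inequality then forces $\P(\tau>C/k)\leq 1/2$, establishing the lemma with $c:=1/2$.

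The main obstacle is the generator estimate \eqref{eq:planGen} in the boundary regime $x\in[1,k-1]$: the possibly-negative term $-x^2\sum_{j=x}^{k-1}\delta_j^{(k)}$, which by \eqref{eq:BoundOnDelta} is of order $k^2x$, must be absorbed by the positive contributions. This is precisely the algebraic cancellation that motivates the choice \eqref{eq:deltaWeights} (and for which Lemma~\ref{lem:ApproxEigen} is the analogue at the level of $\sin$-test functions); I would execute it by a case analysis split around $x\sim k/2$, estimating the boundary sums explicitly via \eqref{eq:BoundOnDelta} and the telescoping identity \eqref{eq:Ingrid1}.
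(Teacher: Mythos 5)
Your proposal is correct in outline but follows a genuinely different route from the paper's proof. The paper argues softly, making no exact use of \eqref{eq:deltaWeights}: since $\delta^{(k)}_i\geq 1$, the order-$k$ many boundary blocks $[1,j]$ with $j\geq k/2$ ring at aggregate rate of order $k$, and each such ring places card $1$ above $k/4$ with probability at least $1/4$, so $P_1(\tilde{\tau}_{>k/4}>4/k)\leq 1/8$; it then iterates this positive-probability escape through the thresholds $mk/4$, $m\in[16]$, using only the crude bound $\delta^{(k)}_i\leq 8$ for $i\geq k/4$. You instead run Dynkin/optional stopping on $f(x)=x^2$ for the autonomous position chain, and once the production estimate $(\mathcal{A}_Zf)(x)\geq c_1k^3$ on $[1,4k]$ is in place, your overshoot bound $Z_{t\wedge\tau}\leq 5k-1$, the resulting $\E[t\wedge\tau]\leq 25/(c_1k)$, and the Markov step are complete and correct. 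The deferred estimate is the crux of your route, but it does hold: with $\delta^{(k)}_j=(k^2+3j^2-1)/(4j^2-1)$ the terms of order $k^2x$, $kx^2$ and $x^3$ cancel, leaving $(\mathcal{A}_Zf)(x)=\tfrac{k^3}{6}+O(k^2)$ uniformly in $x\in[1,k-1]$ --- the same cancellation that \eqref{eq:deltaWeights} is designed to produce in Lemma~\ref{lem:ApproxEigen} --- though this summation (and uniformity for small $k$) must actually be written out to claim the lemma. What your route buys is a stronger conclusion, $E_x[\tilde{\tau}_{>4k}]\leq C/k$, which is precisely the expectation input \eqref{eq:BoundaryTime} that the paper later needs for these rates and otherwise assembles from this lemma together with Lemma~\ref{lem:ExpectedTimeBoundary}-type iterations such as \eqref{eq:IterationEquation}; what it costs is the explicit computation with \eqref{eq:deltaWeights} that the paper's proof avoids entirely. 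Two harmless slips: for $x\in[k,4k]$ the per-block contribution is $\E[U^2]-x^2$, not a variance of $(k^2-1)/6$ (only the sum over the $k$ bulk blocks equals $k(k^2-1)/6$), and the left--right symmetry remark is superfluous, since $\tilde{\tau}_{>4k}=0$ whenever $x>4k$.
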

\begin{proof} Using the canonical coupling, we can again assume without loss of generality that $x=1$. Note that until time $\tilde{\tau}_{>k/4}$, for any update of an interval $[j]$ for some $j \geq k/2$ before time $\tilde{\tau}_{>k/4}$, we have that card $1$ gets moved to some position $>k/4$ with probability at least $\frac{1}{4}$. Since $\delta^{(k)}_i \geq 1$ for all $i\in [k]$, and each interval $[j]$ is updated at rate at least $1$,
\begin{equation}
P_1\left( \tilde{\tau}_{>k/4} > \frac{4}{k} \right) \leq  \frac{1}{8}  \eqpd
\end{equation}
Since $\delta_{i}^{(k)}\leq 8$ for all $i \geq \frac{k}{4}$, note that the event that the first update involving card~$1$ after time $\tilde{\tau}_{>k/4}$ is initiated by an interval $[j,j+k-1]$ for some $j > k/8$ has positive probability, uniformly in $k$. Hence, we obtain that for some absolute constants $c_1, c_2>0$
\begin{equation}\label{eq:IterationEquation}
P_1\left( \tilde{\tau}_{>k/2} \geq \frac{c_1}{k} \, \Big| \, \tilde{\tau}_{>k/4} \leq  \frac{4}{k} \right) \geq  c_2 \eqpd
\end{equation} Iterating this argument for $\tilde{\tau}_{>mk/4}$ with $m\in [16]$, we conclude. 
\end{proof}

We require a final preliminary estimate on the expected return time for the $\sk$ shuffle with boundaries when the boundary rates $(\delta^{(k)}_i)$ are in $[0,1]$.

\begin{lemma}\label{lem:ExpectedTimeBoundary}
 Let $\delta^{(k)}_i=\delta^{(k)}_{N-i} \in [0,1]$ for every  $i \in [k]$. Then there exists an absolute constant $C>0$ such that for all $x\in [k,2k]$, and all $N$ sufficiently large
 \begin{equation}
 E_x[ \tilde{\tau}_{>4k} ] \leq \frac{C}{\sqrt{k}} \eqcom
 \end{equation} where $E_x$ denotes the expectation with respect to $P_x$.
\end{lemma}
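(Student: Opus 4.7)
The plan is to combine a martingale analysis of the bulk dynamics with Lemma~\ref{lem:ExitBoundaryDeltaZero} to handle any excursions into the boundary region. Throughout, set $\sigma \coloneqq \inf\{t \geq 0 : Z_{1,t} < k\}$ and $\tau \coloneqq \tilde\tau_{>4k}$.

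First, I would estimate the time $\tau \wedge \sigma$ that the card spends in the interior $[k, 4k]$. While $Z_{1,t} \in [k, N-k]$, each of the $k$ shuffle intervals containing the card is updated at rate $1$, and conditionally on an update the new position is uniform on the chosen interval, which is centered at the current position; hence $Z_{1,t}$ is a martingale, and the compensator of $Z_{1,t}^2$ grows at rate $k(k^2-1)/6$. Applying the optional stopping theorem to $Z_{1,t}^2 - (k(k^2-1)/6)\,t$ at time $\tau \wedge \sigma$ and controlling overshoots (bounded by $k$) yields $E_x[\tau \wedge \sigma] = O(1/k)$ for $x \in [k, 2k]$. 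A parallel application of optional stopping to the martingale $Z_{1,t}$ itself shows, via a gambler's-ruin argument using that $x \in [k,2k]$ is at distance $\geq k/2$ from the boundary of the interval $[k,4k]$, that $\mathbf{P}_x(Z_{1,\tau \wedge \sigma} > 4k)$ is bounded below by a positive absolute constant.

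Second, I would bound the expected duration of a boundary excursion. On $\{\sigma < \tau\}$ the card enters $[1, k-1]$ at some random position $z$; a direct computation of the one-step transition kernel from bulk positions near $k$ shows that $z$ has conditional distribution approximately proportional to $z$ on $[1, k-1]$, so in particular deep entries into $[1, \sqrt{k}]$ occur with probability at most $O(1/k)$. For typical entries at $z = \Theta(k)$, a Lyapunov analysis with $V(y) = k - y$, exploiting the positive drift $y(k-y)/2$ at position $y \in [1, k-1]$ that arises from the bulk intervals intersecting the boundary (and which is only enhanced by the non-negative boundary rates $\delta^{(k)}_i$), yields a short expected return time to $[k, N-k]$. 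For atypical deep entries, Lemma~\ref{lem:ExitBoundaryDeltaZero} provides an $O(1)$ return time weighted by the small landing probability.

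Third, by the strong Markov property the hitting time $\tau$ decomposes into a geometric number of bulk--boundary cycles, each of expected duration at most $O(1/\sqrt{k})$ and with success probability (reaching $>4k$) bounded below by a positive constant. Summing the geometric series yields the claimed bound $E_x[\tilde\tau_{>4k}] \leq C/\sqrt{k}$. The main technical obstacle is precisely the interplay between the fast bulk diffusion and the possibly slow boundary dynamics in the worst case $\delta^{(k)}_i = 0$, where jumps from the extreme position $z = 1$ occur at rate only $1$; the qualitative short-time bound of Lemma~\ref{lem:ExitBoundaryDeltaZero} is what allows the rare deep-boundary excursions to be absorbed into the error term rather than dominating the expectation.
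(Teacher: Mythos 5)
There is a genuine quantitative gap in your boundary-excursion step, and it sits exactly where the factor $k^{-1/2}$ has to come from. Starting from any $x\in[k,2k]$, the card enters the boundary region $[1,k-1]$ before reaching $4k$ with probability bounded away from zero (e.g.\ from $x=2k$, optional stopping gives an upward-exit probability at most $1/2$), so your claimed bound $E_x[\tilde\tau_{>4k}]\le C/\sqrt{k}$ forces the expected duration of a \emph{typical} boundary excursion to be $O(k^{-1/2})$. Your Lyapunov argument does not deliver this: with $V(y)=k-y$ and drift $y(k-y)/2$, the uniform drift bound on $[1,k-1]$ is only $(k-1)/2$, so optional stopping yields $E_z[\text{return time}]\le 2(k-z)/(k-1)$, and since your own entry law (density $\propto z$) makes $k-z=\Theta(k)$ typical, this is merely $O(1)$ --- no better than what Lemma~\ref{lem:ExitBoundaryDeltaZero} already gives, and it reproduces only the trivial bound $E_x[\tilde\tau_{>4k}]=O(1)$. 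The mechanism that actually produces $k^{-1/2}$ (and is what the paper uses) is a rate argument at the intermediate scale $\sqrt{k}$: while the card sits at a position $y\ge\sqrt{k}$, the $y\ge \sqrt{k}$ bulk intervals containing it ring at total rate at least $\sqrt{k}$, and each such update moves the card above $k/4$ with probability bounded below uniformly in $k$, so the expected time spent above $\sqrt{k}$ is $O(k^{-1/2})$; dips below $\sqrt{k}$ occur with probability $O(k^{-1/2})$ (each update lands in $[1,\sqrt{k}]$ with probability at most $\sqrt{k}/k$, against a constant escape probability) and cost only $O(1)$ by Lemma~\ref{lem:ExitBoundaryDeltaZero}, whence the product $O(k^{-1/2})\cdot O(1)$. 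Your $O(1/k)$ estimate for deep entries controls only the position at the \emph{first} entry into $[1,k-1]$; it does not rule out the card being pushed into $[1,\sqrt{k}]$ by later updates during a typical excursion, so it cannot substitute for this within-excursion estimate.

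A secondary, fixable inaccuracy: your gambler's-ruin lower bound on $\mathbf{P}_x(Z_{1,\tau\wedge\sigma}>4k)$ invokes that $x\in[k,2k]$ is at distance at least $k/2$ from the boundary of $[k,4k]$, which fails for $x$ near $k$ (take $x=k$); optional stopping with overshoots bounded by $k$ only gives a lower bound of order $(x-k)/(4k)$, which degenerates there. One preliminary step with constant probability (e.g.\ a single update landing the card in $[3k/2,2k-1]$ before any excursion below $k$) repairs the per-cycle success probability, but as written it is not justified uniformly over $x\in[k,2k]$.
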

\begin{proof}
From Lemma \ref{lem:ExitBoundaryDeltaZero}, we get that there exists some  $C_1>0$ such that for all $x\in [4k]$, we have that $E_x[ \tilde{\tau}_{>4k} ]\leq C_1$. Thus, using the canonical coupling to see that $E_x[ \tilde{\tau}_{>4k}]$ is decreasing in $x$, and iterating along $\tilde{\tau}_{>mk/4}$ for $m\in [16]$ as in \eqref{eq:IterationEquation},  it suffices to show that
\begin{equation*}
P_x\left( \tilde{\tau}_{<\sqrt{k}} <  \tilde{\tau}_{>4k} \right) \leq \frac{c_1}{\sqrt{k}} \quad \text{and} \quad E_{\sqrt{k}}\left[ \tilde{\tau}_{>k/4} \leq \frac{c_2}{\sqrt{k}} \, \Big| \,  Z_{1,t}\geq \sqrt{k} \text{ for all } t \in  [0,\tilde{\tau}_{>k/4}] \right] \leq \frac{c_3}{\sqrt{k}} 
\end{equation*} for all $x\in [k,2k]$ and constants $c_1,c_2,c_3>0$. The second inequality is immediate from the fact that on the event $\{ Z_{1,t}\geq \sqrt{k} \text{ for all } t \geq 0  \}$, with positive probability $\tilde{\tau}_{>k/4} \leq c_4k^{-1/2}$ holds for some constant $c_4>0$ by considering the first time an interval $[j,j+k-1]$ for $j<\sqrt{k}$ is updated. To see the first inequality, note that for each update of interval  containing card $1$, we have a positive probability, uniformly in $k$ and the position $x \in [\sqrt{k},k]$ of card $1$, that card $1$ is moved to some position $>k/4$, while the probability to move card $1$ to the first $\sqrt{k}$ positions is at most $c_5k^{-1/2}$ for some absolute constant $c_5>0$. 
\end{proof}

\subsection{Proof of the upper bound on the mixing time}
We start by showing that with positive probability, the time for card $1$ to exceed  $\lfloor \frac{N}{2} \rfloor$ is of order at most $N^{2}k^{-3}$.

\begin{lemma}\label{lem:MartingaleInterior} Suppose that $\delta^{(k)}_i=\delta^{(k)}_{N-i}\in [0,1]$ for every  $i \in [k]$,  and $k=o(N^\frac{2}{3})$. Then for all $x\in [N]$, we have that for some positive constants $c_1,c_2>0$, and $N$ sufficiently large,
\begin{equation}\label{eq:HitMidZero}
P_{x}\left( \tilde{\tau}_{>\lfloor N/2\rfloor} > c_1N^2k^{-3}  \right) \leq c_2 \, .
\end{equation} 
For the $\sk$ shuffle with rates $(\delta^{(k)}_i)$ from \eqref{eq:deltaWeights},  \eqref{eq:HitMidZero} continues to hold for all $k=o(N)$.
\end{lemma}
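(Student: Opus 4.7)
The plan is to show $\mathbb{E}_x[\tilde{\tau}_{>\lfloor N/2\rfloor}] \le CN^2/k^3$ for some absolute constant $C>0$ and then invoke Markov's inequality. The proof reduces first to a bulk starting point and then exploits that the position process of card~$1$ is a martingale in the interior with quadratic variation growing at rate of order $k^3$.

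First, by Lemma~\ref{lem:ExitBoundaryDeltaZero} (for the case $\delta^{(k)}_i \in [0,1]$) or Lemma~\ref{lem:ExitBoundaryDeltaSuitable} (for the rates from~\eqref{eq:deltaWeights}), the card of label~$1$ exits the first $4k$ positions in constant expected time from any starting position, so by the strong Markov property we may assume it starts at some $y \in [4k, \lfloor N/2\rfloor]$.

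Second, on the interior set $[k, N-k+1]$ only the $k$ bulk intervals of length $k$ containing the current position~$z$ update card~$1$, each ringing at rate~$1$ and placing it uniformly within the interval. Averaging over the uniform choice of the starting point of the interval, the resulting increment of $Z_{1,t}$ is triangularly distributed on $\{-(k-1),\dots,k-1\}$ with mean zero and variance $(k^2-1)/6$, so $(Z_{1,t})_{t\ge 0}$ is a martingale on the interior with instantaneous quadratic variation $\sigma^2(z) = k(k^2-1)/6 \asymp k^3$. Writing $\tau := \tilde{\tau}_{>\lfloor N/2\rfloor}$ and applying optional stopping to $Z_{1,t\wedge \tau}^2 - \int_0^{t\wedge \tau}\sigma^2(Z_{1,s})\,ds$, and using the jump bound $Z_{1,\tau}\le \lfloor N/2\rfloor+k$ to control the boundary term, we obtain
\begin{equation*}
\mathbb{E}\Bigl[\int_0^\tau \sigma^2(Z_{1,s})\,ds\Bigr] \;\le\; \bigl(N/2+k\bigr)^2 - y^2 \;\le\; N^2/2,
\end{equation*}
so that the expected time $\mathbb{E}[\tau_{\mathrm{bulk}}]$ spent in the interior before $\tau$ is at most $3N^2/k^3$.

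The main obstacle is to control the time spent near the boundary. A direct computation of the drift at positions $z < k$ gives $\sum_{j=1}^{z}\bigl(j+(k-1)/2-z\bigr) = z(k-z)/2$ per unit time, showing that $(Z_{1,t})$ is pushed toward the interior at a rate of order $k$ (and this drift is further strengthened by the boundary moves when $\delta^{(k)}_i$ is chosen as in~\eqref{eq:deltaWeights}, where rates of order $k^2$ near the boundary contribute an inward drift of order $k^2$). Applying optional stopping to the Dynkin-style martingale $Z_{1,t\wedge\tau} - \int_0^{t\wedge\tau}\mathrm{drift}(Z_{1,s})\,ds$, combined with $\mathbb{E}[Z_{1,\tau}]-y \le N/2$, yields $\mathbb{E}[\tau_{\mathrm{bdy}}] \le CN/k$, which is $O(N^2/k^3)$ provided $k^2 \le N$. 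For the remaining range $\sqrt{N}\le k=o(N^{2/3})$ in the first case, and up to $k = o(N)$ in the second, the boundary estimate is tightened by invoking Lemma~\ref{lem:ExpectedTimeBoundary} (or its analogue for the rates~\eqref{eq:deltaWeights}) together with a renewal-type bound on the number of excursions into the boundary region before~$\tau$: each such excursion has expected duration $O(k^{-1/2})$ (respectively $O(k^{-1})$), while their number is controlled by the total bulk variance of the previous step. Combining both ingredients yields $\mathbb{E}_y[\tau]\le CN^2/k^3$, and Markov's inequality then concludes the proof after absorbing the failure probability of the first step into the constant $c_2$.
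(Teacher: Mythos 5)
Your interior (bulk) step is sound and matches the paper's mechanism: after using Lemmas~\ref{lem:ExitBoundaryDeltaZero}/\ref{lem:ExitBoundaryDeltaSuitable} to reach position $4k$, the position of card $1$ is a mean-zero jump process in the interior whose quadratic variation grows at rate of order $k^3$ (your triangular-increment computation is correct), and optional stopping on $Z^2$ minus its compensator gives an expected interior time of order $N^2k^{-3}$. The genuine gap is in how you dispose of the time spent in the boundary region. Your drift computation $\sum_{j=1}^{z}\bigl(j+\tfrac{k-1}{2}-z\bigr)=z(k-z)/2$ only accounts for the bulk intervals and omits the boundary blocks $[1,i]$ with $i\geq z$, which pull the card \emph{toward} the boundary when $i<2z-1$. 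For adversarial rates $\delta^{(k)}_i\in[0,1]$ (take $\delta^{(k)}_{k-1}=1$), at $z=k-1$ the block $[1,k-1]$ relocates the card to expected position $\approx k/2$ at rate $1$, contributing drift $\approx -k/2$, which nearly cancels the bulk drift $(k-1)/2$; the net inward drift there is of order $1$, not of order $k$ (and the same degeneration near $z\approx k$ occurs for the rates \eqref{eq:deltaWeights}, so the parenthetical claim of an order-$k^2$ strengthening is also not uniform). Consequently the optional-stopping bound $\mathbb{E}[\tau_{\mathrm{bdy}}]\leq CN/k$ does not follow as stated, and even if it did it only suffices for $k\lesssim N^{1/2}$, as you note. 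Your fallback for larger $k$ (``renewal-type bound \dots number controlled by the total bulk variance of the previous step'') is not an argument: you never quantify how many boundary excursions occur before $\tilde{\tau}_{>\lfloor N/2\rfloor}$.

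The missing ingredient is exactly what the paper uses instead: a gambler's-ruin estimate. Stop the interior martingale at $\tau_{\mathrm{hit}}=\min(\tilde{\tau}_{<2k},\tilde{\tau}_{>N-2k})$; optional stopping gives $P_{4k}\bigl(\tilde{\tau}_{<2k}<\tilde{\tau}_{>\lfloor N/2\rfloor}\bigr)\asymp k/N$, so after the initial exit the card, with probability $1-O(k/N)$, never re-enters $[2k]$ before crossing $\lfloor N/2\rfloor$, and on that event the quadratic-variation bound alone controls the crossing time; the exit-time lemmas (\ref{lem:ExitBoundaryDeltaSuitable}, \ref{lem:ExpectedTimeBoundary}) are only needed for the single initial passage through the boundary. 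This sidesteps any need for a uniform drift bound on $[1,k-1]$ or for bounding $\mathbb{E}_x[\tilde{\tau}_{>\lfloor N/2\rfloor}]$ itself, and it is why the paper's conclusion is a positive-probability statement rather than a first-moment bound. To rescue your route you would either have to prove a correct, uniform lower bound on the net drift (it can be as small as order $1$ near $z\approx k$, so a further decomposition of the boundary region, or a diffusive argument there, is unavoidable), or replace the renewal hand-wave by the ruin estimate above.
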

\begin{proof} In the following, we define a stopping time  $\tau_{\text{hit}}$ for the process $(Z_{1,t})_{t \geq 0}$ by
\begin{equation}
\tau_{\text{hit}} := \min\left( \tilde{\tau}_{<2k} , \tilde{\tau}_{>N-2k}  \right) \eqpd
\end{equation} Note that $(Z_{1,t})_{t \in [0,\tau_{\text{hit}}]}$ is a stopped martingale, and by the optional stopping theorem
\begin{equation}\label{eq:Gamblers}
P_{4k}\left(  \tilde{\tau}_{<2k} < \tilde{\tau}_{>\lfloor N/2 \rfloor}  \right) = \frac{2k}{\lfloor N/2 \rfloor-2k} \eqpd
\end{equation} Let $X$ be the amount of time $(Z_{1,t})_{t \geq 0}$ spends until time $\tilde{\tau}_{>4k}$ at sites $[2k]$. From Lemma~\ref{lem:ExitBoundaryDeltaSuitable} and Lemma~\ref{lem:ExpectedTimeBoundary}, we obtain that for some constant $c>0$, and all $x\in [4k]$
\begin{equation}\label{eq:BoundaryTime}
E_x[X] \leq \begin{cases} ck^{-1/2}& \text{ if } \delta^{(k)}_i\in [0,1] \text{ for all } i\in [k] \\
ck^{-1}& \text{ for rates } (\delta^{(k)}_i) \text{ from \eqref{eq:deltaWeights}} \eqpd 
\end{cases}
\end{equation}
For the stopped martingale $(Z_{1,t})_{t \in [0,\tau_{\text{hit}}]}$, note that its quadratic variation $(\langle Z_{1,\cdot}\rangle_t)_{t \in [0,\tau_{\text{hit}}]}$ satisfies for some constant $c^{\prime}>0$, and all $t\geq 0$
\begin{equation}
\langle Z_{1,\cdot}\rangle_t \geq c^{\prime} k^3 t
\end{equation} as card $1$ moves at rate $k$ according to an increment with    a variance of order $k^2$. By the optional stopping theorem for the martingale $(M_t)_{t \in [0,\tau_{\text{hit}}]}$ with  $M_t=(Z_{1,t})^2 - \langle Z_{1,\cdot}\rangle_t$, we see that $E_{4k}[\tau_{\text{hit}}]$ is of order $N^{2}k^{-3}$. Together with \eqref{eq:Gamblers} and \eqref{eq:BoundaryTime}, we conclude.
\end{proof}


\begin{proof}[Proof of Proposition \ref{prop:genupperbound}]
In the following, we argue that there exist $c_1,c_2>0$ such that
\begin{equation}\label{eq:IndividualCards}
\mathbf{P}( \tau_1 > c_1 N^2 k^{-3}) \leq c_2 \eqpd
\end{equation} for any pair of starting configurations $\zeta_0,\zeta^{\prime}_0$. Since the canonical coupling preserves the coalescence of cards, the upper bound on the mixing time then follows from iterating  \eqref{eq:IndividualCards} for all cards with labels in $[N]$, and a union bound. By Lemma \ref{lem:MartingaleInterior}, for some $c_3,c_4>0$\begin{equation}
\mathbf{P}( \tilde{\tau}_{>\lfloor N/2\rfloor}>c_3 N^2 k^{-3}) \geq \c_4\eqpd
\end{equation}  Since $(Z_{1,t})_{t \in [0,\tilde{\tau}_{<N/4} \wedge \tilde{\tau}_{>3N/4}]}$ is a stopped martingale with increments bounded by $k$, 
\begin{equation}\label{eq:StayPut}
\mathbf{P}\left( Z_{1,t} \in \left[ \frac{N}{4}, \frac{3N}{4} \right] \text{ for all } t\in \left[ \tilde{\tau}_{>\lfloor N/2\rfloor}, \tilde{\tau}_{>\lfloor N/2\rfloor} + CN^2k^{-3} \right] \right) > c_5 
\end{equation} for all $C>0$ and some $c_5=c_5(C)>0$. Conditioning on the event in \eqref{eq:StayPut}, note that by Lemma \ref{lem:MartingaleInterior} we can choose $C>0$ such that with positive probability, there exists some $t_{\ast} \in [ \tilde{\tau}_{>\lfloor N/2\rfloor}, \tilde{\tau}_{>\lfloor N/2\rfloor} + CN^2k^{-3}]$ such that $N/5< Z_{1,t_{\ast}}^{\prime}< 4N/5$. Using the Strong Markov property under the coupling $\mathbf{P}$, we see that  $(Z_{1,t})_{t  \geq t_{\ast}}$ and $(Z_{1,t}^{\prime})_{t  \geq t_{\ast}}$ coalesce with positive probability before hitting $2k$ or $N-2k$. This gives \eqref{eq:IndividualCards}, and hence finishes the proof.
\end{proof}

\section{Cutoff for  the $\sk$ shuffle with boundaries}\label{sec:lacoinupperbound}

\subsection{Approximate Fourier Analysis}
Recall from Section \ref{sec:prelim} the height function $(h_{\zeta_t})_{t \geq 0}$ of the $\sk$ shuffle with boundaries $(\zeta_t)_{t\geq 0}$ and, with a slight abuse of notation, set
\be
h_t^{\prime}(x,y) \coloneqq h_{\zeta_t}(x,y) \eqpd
\ee
In \cite{Lacoin_2016}, a key observation is that the expected height function $(x,y,t) \mapsto \bE[{h^{\prime}_t(x,y)}]$ of the $S_2$ shuffle is a solution $f: \{0, \dots, N\}^2 \times \R_0^{+} \to \R$ to the discrete heat equation 
\be
\begin{cases}
\partial_t f = \Delta_x f\\
f(0,y,t) = f(N,y,t) = 0\\
f(x,y,0) = \bE[{h^{\prime}_0(x,y)}] 
\end{cases}
\ee 
where $\Delta_x$ denotes the discrete Laplace operator 
\be
\Delta_x(f)(x) \coloneqq \frac{1}{2}f(x-1, y ,t) + \frac{1}{2}f(x+1, y, t) -f(x) \eqpd
\ee
This allows for sharp estimates on $\bE[{h^{\prime}_t(x,y)}]$ for the $S_2$ shuffle; see Lemma 4.1 in~\cite{Lacoin_2016}. Then next lemma provides a similar result for the $\sk$ shuffle with boundaries for $k \geq 3$.
\begin{lemma}\label{lem:upperbexpectedheight}
Let $k \geq 3$. There exists a constant $C>0$ such that for any initial configuration $\sigma \in \mathcal{S}_N$ of the $\sk$ shuffle with boundaries, for all $t\geq 0$, and all $y\in [N]$, 
\be\label{eq:MainFourierStatement}
\max\limits_{x \in \{0, \dots, N\}}\bE[h_t^{\prime}(x,y)] \leq 8\min(y, N-y)e^{-t\cdot\lambda_{1,N,k}} + Ck^3.
\ee
\end{lemma}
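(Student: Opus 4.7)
The strategy is to carry out approximate Fourier analysis on the expected height function, exploiting the approximate eigenfunction property established in Lemma~\ref{lem:ApproxEigen}. Fix $y \in [N]$, let $u(x, t) \coloneqq \bE[h_t'(x, y)]$, and observe that the trivial bound $h_t'(x, y) \leq \min(y, N-y)$ already yields the claim when $t \leq \log(8)/\lambda_{N,k}^{(1)}$; we thus reduce to the case $t > \log(8)/\lambda_{N,k}^{(1)}$.

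Since $\{\psi_j\}_{j=1}^{N-1}$ is an orthogonal basis for functions on $\{1, \dots, N-1\}$ vanishing at the endpoints, we have the discrete Fourier inversion $u(x, t) = \frac{2}{N}\sum_{j=1}^{N-1} F_j(t)\psi_j(x)$, where
\[
F_j(t) \coloneqq \sum_{x=1}^{N-1} u(x, t)\psi_j(x) = \bE[\Phi^{(j)}_{N,y}(\zeta_t)].
\]
Differentiating in time and combining Dynkin's formula $\partial_t \bE[f(\zeta_t)] = \bE[(\tilde{\mathcal{L}}f)(\zeta_t)]$ with Lemma~\ref{lem:ApproxEigen} gives $|F_j'(t) + \lambda_{N,k}^{(j)} F_j(t)| \leq Ck^6 j^3 N^{-3}$, so by a Gronwall argument as in the proof of Lemma~\ref{lem:gensecondmoment},
\[
\bigl|F_j(t) - e^{-\lambda_{N,k}^{(j)} t} F_j(0)\bigr| \leq \frac{C k^6 j^3/N^3}{\lambda_{N,k}^{(j)}} \leq \frac{C' k^3 j}{N},
\]
where we invoke $\lambda_{N,k}^{(j)} \gtrsim k^3 j^2/N^2$ from \eqref{eq:eigenvaluebound} when $j \lesssim N/k$ and the saturation bound $\lambda_{N,k}^{(j)} \gtrsim k$ for $j \gtrsim N/k$.

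For the initial coefficient, writing $h_\sigma(x, y) = \sum_{z=1}^x (\mathds{1}_{\sigma(z) \leq y} - y/N)$ and interchanging the order of summation yields $F_j(0) = \sum_{z=1}^{N-1}(\mathds{1}_{\sigma(z) \leq y} - y/N) S_j(z)$ with $S_j(z) \coloneqq \sum_{x=z}^{N-1} \psi_j(x)$. The standard bound $|S_j(z)| \leq 2N/(\pi j)$ combined with the identity $\sum_z |\mathds{1}_{\sigma(z) \leq y} - y/N| = 2y(N-y)/N$ gives $|F_j(0)| \leq 4 N \min(y, N-y)/(\pi j)$. Plugging into the Fourier inversion, the $j = 1$ mode contributes at most $(8/\pi)\min(y, N-y)\, e^{-\lambda_{N,k}^{(1)} t}$. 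Writing $e^{-\lambda_{N,k}^{(j)} t} = e^{-\lambda_{N,k}^{(1)} t}\, e^{-(\lambda_{N,k}^{(j)} - \lambda_{N,k}^{(1)})t}$ and using $\lambda_{N,k}^{(j)}/\lambda_{N,k}^{(1)} \gtrsim j^2$ in tandem with $t > \log(8)/\lambda_{N,k}^{(1)}$, the modes $j \geq 2$ contribute at most a small constant multiple of $\min(y, N-y)\, e^{-\lambda_{N,k}^{(1)} t}$. Finally, the ODE-approximation error contributes $\frac{2}{N}\sum_{j=1}^{N-1} C' k^3 j/N = O(k^3)$, closing the estimate with room to spare under the leading constant $8$.

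The principal difficulty lies in the uniform control of the approximation error across all Fourier modes, particularly in the regime $j \sim N/k$ where the quadratic asymptotic $\lambda_{N,k}^{(j)} \sim k^3 j^2/N^2$ degrades and must be replaced by the saturation bound $\lambda_{N,k}^{(j)} \gtrsim k$; here one must argue more carefully that the combined contribution of the high-frequency modes still fits within the $O(k^3)$ budget. The preliminary case split at $t = \log(8)/\lambda_{N,k}^{(1)}$ is essential, since the naive triangle-inequality bound on the Fourier series diverges logarithmically as $t \to 0$ and only becomes useful once the exponential decay factors dominate.
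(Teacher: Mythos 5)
Your overall strategy coincides with the paper's proof of this lemma: expand $\bE[h'_t(x,y)]$ in the sine basis, control each Fourier coefficient $F_j(t)=\bE[\Phi^{(j)}_{N,y}(\zeta_t)]$ via Lemma~\ref{lem:ApproxEigen} together with the Gronwall argument from Lemma~\ref{lem:gensecondmoment}, sum over modes, and dispose of small times $t$ by the trivial bound $|h'_t(x,y)|\leq \min(y,N-y)$. Your summation-by-parts bound $|F_j(0)|\lesssim N\min(y,N-y)/j$ is a genuine refinement of the paper's cruder $|\Phi^{(j)}_{N,y}(\sigma)|\leq N\min(y,N-y)$, which the paper compensates for by invoking $\lambda^{(j)}_{N,k}\geq j\,\lambda^{(1)}_{N,k}$ and summing a geometric series; either route handles the main (exponentially damped) term, although your intermediate claim $\lambda^{(j)}_{N,k}/\lambda^{(1)}_{N,k}\gtrsim j^2$ is false as a blanket statement (one has $\lambda^{(j)}_{N,k}\leq 2(k-1)$ for all $j$, while $j^2\lambda^{(1)}_{N,k}$ reaches order $k^3$ for $j$ of order $N$); this particular point is repairable, since for $j\gtrsim N/k$ one can use $\lambda^{(j)}_{N,k}\gtrsim k$ and $t\gtrsim N^2/k^3$ to crush those modes.

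The genuine gap is in the approximation-error term, i.e.\ exactly where the constant $Ck^3$ must be produced. Your inequality $Ck^6j^3N^{-3}/\lambda^{(j)}_{N,k}\leq C'k^3j/N$ holds only in the regime $j\lesssim N/k$ where $\lambda^{(j)}_{N,k}\gtrsim k^3j^2/N^2$; in the saturation regime $j\gtrsim N/k$, where $\lambda^{(j)}_{N,k}\asymp k$, the left-hand side is of order $k^5j^3/N^3$, which exceeds $k^3j/N$ precisely when $j\gtrsim N/k$. Consequently the final summation $\frac{2}{N}\sum_j C'k^3j/N=O(k^3)$ is not justified by the stated ingredients: carrying out the sum with the correct per-mode bounds gives an error of order $k$ from the low modes plus order $k^5$ from the high modes, so your argument as written proves the lemma with $Ck^5$ in place of $Ck^3$. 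You flag this high-frequency regime yourself as the "principal difficulty" but do not close it, and it is not cosmetic: the $k^3$ (rather than $k^5$) error is what makes the lemma strong enough for the application in Proposition~\ref{prop:upperboundt2} under the hypothesis $k=o(N^{1/6})$. To complete the proof along your lines you would need either a sharper version of the error bound in Lemma~\ref{lem:ApproxEigen} for large $j$, or a different treatment of the modes $j\gtrsim N/k$ in the error sum; the paper itself does not pass through your per-mode bound but instead bounds the whole error summand directly (using $\lambda^{(j)}_{N,k}\geq j\lambda^{(1)}_{N,k}$), so your proposal cannot simply borrow that step as stated.
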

\begin{proof}
By the standard Fourier decomposition, we obtain that
\be\label{eq:FourierExpectedHeight}
\bE[h_t^{\prime}(x,y)] = \frac{2}{N}\sum\limits_{i=1}^N \sin\left( \frac{i \pi x}{N}\right)\sum\limits_{j=1}^N \bE\left[h_t^{\prime}(j,y)\sin\left( \frac{ij \pi}{N}\right)\right].
\ee
Recall $\Phi_{N,t}^{(j)}$ from \eqref{eq:eigenfunctiontime} and the  approximate eigenvalues $\lambda_{j,N,k}$ from \eqref{eq:eigenvaluebound}. By Lemma \ref{lem:ApproxEigen} and the same arguments as for \eqref{eq:gronwallbound} in Lemma \ref{lem:gensecondmoment}, we see that for all $y \in [N-1]$ and $t\geq 0$
\be
\left| \bE\left[\Phi_{N,t}^{(i)}\right] - e^{-t \cdot \lambda_{i,N,k}}\Phi_{N,0}^{(i)}(y)\right|\leq \frac{c_i}{\lambda_{i,N,k}} \eqcom
\ee
where we set $c_i = Ci^3k^6N^{-3}$ and take $C$ from Lemma \ref{lem:ApproxEigen}. 
Together with \eqref{eq:FourierExpectedHeight} we have
\be
\begin{split}\label{expectedheightupperb}
\abs{\bE[h^{\prime}_t(x,y)] } \leq \left|\frac{2}{N}\sum\limits_{i=1}^{N} \sin\left( \frac{i \pi x}{N}\right)e^{-t \cdot \lambda_{i,N,k}}\Phi_{N,0}^{(i)}(y) \right|+\left| \frac{2}{N}\sum\limits_{j=1}^N \left|\sin\left( \frac{j \pi x}{N}\right)\right| \frac{c_j}{\lambda_{j,N,k}} \right|\eqpd
\end{split}
\ee
Note that the second summand in \eqref{expectedheightupperb} is bounded from above by $2Ck^3$. For the first summand, note that $\abs{\Phi_{N}^{(i)}(y)}\leq \min(y,N-y)N$ for all $y\in [N]$ and $i \in [N-1]$. Moreover,  $\lambda_{j,N,k} \geq j \cdot \lambda_{1,N,k}$ holds for all $j \geq 1$. Using the fact that $\left|\sin(z)\right|<|z|$ for all $z \in \R$, we get  
\be\label{expectedheightfinalb}
\left|\bE[h^{\prime}_t(x,y)]\right| \leq 2Ck^3+ 8y\sum\limits_{j=1}^{N-1} e^{-jt\lambda_{1,N,k}} \leq 2Ck^3+\frac{8ye^{-t\lambda_{1,N,k}}}{1-e^{-t\lambda_{1,N,k}}} \eqpd
\ee
When $e^{-t\lambda_{1,N,k}} \leq \frac{1}{2}$, this allows us to conclude \eqref{eq:MainFourierStatement}. For $e^{-t\lambda_{1,N,k}} > \frac{1}{2}$,  \eqref{eq:MainFourierStatement}  is immediate from the fact that $\max_{x \in \{0,\dots,N\}}h_{\sigma}(x,y) \leq \min(y,N-y)$ for all $y\in [N]$ and $\sigma \in \mathcal{S}_N$.
\end{proof}

\subsection{Proof of the upper bound in Theorem \ref{thm:cutoff}} As we follow in large parts the arguments of Lacoin in \cite{Lacoin_2016}, we give the necessary adjustments, rather than  the arguments in full detail. 
Let us start by introducing the main objects and outlining the main strategy for the proof. Fix some $K\in \N$ chosen later, and recall the height function representation $h_{\sigma}(x,y)$ from \eqref{eq:heightfunction} for a permutation $\sigma \in \mathcal{S}_N$. Moreover, we recall the following definitions from \cite{Lacoin_2016}. \\
 
Let $x_i \coloneqq \lceil iN/K\rceil$ for all $i \geq 0$. For a permutation $\sigma \in \mathcal{S}_N$, we define in the following two projections $\widehat{\sigma}$ and $\bar{\sigma}$. The \textbf{semi-skeleton projection} $\widehat{\sigma}=(\widehat{\sigma}_{x,i})_{x\in [N],i\in [K]}$ and \textbf{skeleton projection} $\bar{\sigma}=(\bar{\sigma}(m,i))_{i,m \in [K]}$ are given by
\begin{equation}
  \widehat{\sigma}(x,i) = h_\sigma(x,x_i)\quad \text{ and } \quad  \bar{\sigma}(m,i) =h_{\sigma}(x_m, x_i)   
\end{equation}
 respectively. We denote by $\widehat{\mathcal{S}}_N$ and $\bar{\mathcal{S}}_N$ the corresponding image spaces of $\mathcal{S}_N$ under these  projections. Given a probability measure $\nu$ on $\mathcal{S}_N$, we use $\widehat{\nu}$, respectively $\bar{\nu}$, to denote the image measures of $\nu$ on $\widehat{\mathcal{S}}_N$, respectively $\bar{\mathcal{S}}_N$, under the semi-skeleton and skeleton projection, i.e.\ we set for all $\widehat{\sigma} \in \widehat{\mathcal{S}}_N$ and $\bar{\sigma} \in \bar{\mathcal{S}}_N$
 \be
  \widehat{\nu}(\widehat{\sigma}) \coloneqq \nu(\{\sigma  \in \mathcal{S}_N : \sigma \mapsto \widehat{\sigma}\}) \quad \text{ and } \quad 
\bar{\nu}(\bar{\sigma}) \coloneqq \nu(\{\sigma  \in \mathcal{S}_N : \sigma \mapsto \bar{\sigma}\}) \eqpd
\ee 
Let $\Delta x_i \coloneqq x_{i} -x_{i-1}$, and let $\widetilde{\mathcal{S}}_N$ be the largest subgroup of $\mathcal{S}_N$ which is for all $i\in [K]$ invariant under permuting the cards of labels between $x_{i-1}+1$  and $x_i$. Note that $\widetilde{\mathcal{S}}_N$ is isomorphic to the product space $ \bigotimes_{i=1}^K\mathcal{S}_{\Delta x_i}$. For a probability measure $\nu$ on $\mathcal{S}_N$, we define the measure $\tilde{\nu}$ on $\mathcal{S}_N$ by setting for all $\sigma \in \mathcal{S}_N$
\be\label{eq:forceduniformmeasure}
\widetilde{\nu}(\sigma) \coloneqq \frac{1}{\Pi_{i=1}^K(\Delta x_i)}\sum_{\widetilde{\sigma} \in \widetilde{\mathcal{S}}_N}\nu (\widetilde{\sigma} \circ \sigma) \eqpd
\ee In words, to obtain the measure $\widetilde{\nu}$ from $\nu$, we apply a uniformly chosen permutation which only shuffles for all $i\in [K]$ the cards of labels $x_{i-1}+1$ to $x_i$ among each other. \\

To show to upper bound on the mixing time in Theorem \ref{thm:cutoff}, let $\delta>0$ and set
\begin{equation*}
    t_1=  \frac{\delta N^2}{3 k^3}\log(N) \qquad  t_2= \left(\frac{2\delta}{3}+ \frac{4}{\pi^2}\right) \frac{N^2}{k^3}\log(N) \qquad  t_3= \left(\delta+ \frac{4}{\pi^2}\right) \frac{N^2}{k^3}\log(N) \eqpd
\end{equation*} We consider the censoring scheme $\mathcal{C}=(\mathcal{C}_t)_{t \geq 0}$ for the $\sk$ shuffle with boundaries given by
\begin{equation}\label{eq:CensoringCutoff}
    \mathcal{C}_t := \begin{cases}
    \{ \{x_i,x_i+1\} \colon i\in [K]\}    & \text{ if } t\in [0, t_1) \cup [t_2, t_3) \\
     \emptyset   & \text{ if } t\notin [0, t_1) \cup [t_2, t_3) \eqpd
    \end{cases}
\end{equation} 
In the following, let $(\nu_t)_{t \geq 0}$ be the law of the $\sk$ shuffle with  boundaries under the censoring scheme $\mathcal{C}$ from  \eqref{eq:CensoringCutoff} started from the identity, and let $\mu=\mu_N$ denote the uniform distribution on $\mathcal{S}_N$.
The proof proceeds now in two steps. First, we argue that by time $t_1$, for all $i\in [K]$, the cards of labels $x_{i-1}+1$ to $x_i$ in the $\sk$ shuffle with boundaries and censoring scheme $\mathcal{C}$ are well mixed among each other. Second, we argue that by time $t_3$, the semi-skeleton has well mixed. To do so, the key task is to verify that the skeleton of the $\sk$ shuffle with boundaries and censoring scheme $\mathcal{C}$ has well mixed by time $t_2$. 
This strategy is summarized and made precise in the following two propositions. 
\begin{proposition}[Proposition 5.1 in \cite{Lacoin_2016}]\label{prop:upperboundt1}
 For all $\e>0$, there exists some $N_0=N_0(\varepsilon)$ such that for all $N \geq N_0$ and $t \geq t_1$
\begin{equation}
\TV{\widetilde{\nu}_t - \nu_t} \leq \e/3 \eqpd
\end{equation}
\end{proposition}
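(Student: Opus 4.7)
The plan is to decompose the argument into two main steps: a commutativity reduction showing it suffices to bound the total-variation distance at $t = t_1$, followed by an analysis of the censored dynamics decoupled on the $K$ position segments.

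First, I would observe that the averaging map $\pi : \nu \mapsto \widetilde{\nu}$ from \eqref{eq:forceduniformmeasure} corresponds to uniform left-multiplication by an element of the subgroup $\widetilde{\mathcal{S}}_N \cong \bigotimes_{i=1}^K \mathcal{S}_{\Delta x_i}$, whereas every $\sk$ update (and every move in the censored variant, whose partition \eqref{eq:Interval} depends only on positions) acts on $\mathcal{S}_N$ by right-multiplication by an element supported on a single position interval. Since left and right multiplications commute, $\pi$ commutes with the time-inhomogeneous semigroup $(P_{s,t})_{s\leq t}$ of the censored $\sk$-shuffle dynamics. Consequently, for every $t \geq t_1$,
\begin{equation*}
\TV{\widetilde{\nu}_t - \nu_t} \;=\; \TV{\pi P_{t_1,t}\nu_{t_1} - P_{t_1,t}\nu_{t_1}} \;=\; \TV{P_{t_1,t}(\widetilde{\nu}_{t_1} - \nu_{t_1})} \;\leq\; \TV{\widetilde{\nu}_{t_1} - \nu_{t_1}},
\end{equation*}
reducing the claim to a bound at time $t_1$.

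Next, I would note that during $[0, t_1)$ all edges $\{x_i,x_i+1\}$ with $i \in [K-1]$ are censored, so by the construction of the censoring scheme every update of an interval straddling such an edge is split into independent shuffles on the sub-intervals contained within single segments $[x_{i-1}+1, x_i]$. The censored dynamics thus decouples into $K$ independent chains, and because $\zeta_0 = \mathrm{id}$, the cards whose labels lie in $[x_{i-1}+1, x_i]$ remain at positions $[x_{i-1}+1, x_i]$ throughout $[0, t_1)$. On each segment the restricted chain is at least as fast (by a monotone coupling, cf. Lemma~\ref{lem:CensoringSk}) as the $\sk$ shuffle on a deck of size $M := \Delta x_i \leq \lceil N/K \rceil+1$, since the additional $S_j$-moves with $j<k$ arising from decompositions at the segment boundaries can only speed up mixing. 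Proposition~\ref{prop:genupperbound} (applicable because $k = o(N^{1/6})$ forces $k = o(M^{2/3})$ for $K$ fixed) then gives a mixing time of each of these chains of order $M^2 k^{-3}\log M \leq C K^{-2} N^2 k^{-3}\log N$. Choosing $K = K(\varepsilon,\delta)$ a sufficiently large constant so that this bound, multiplied by the $O(\log(K/\varepsilon))$ factor arising from submultiplicativity of mixing times, is at most $t_1 = (\delta/3) N^2 k^{-3}\log N$, the law of each restricted chain at time $t_1$ lies within $\varepsilon/(3K)$ of the uniform measure on permutations of its segment. Since $\widetilde{\nu}_{t_1}$ is exactly the product of these uniform measures, a union bound over the $K$ segments yields $\TV{\nu_{t_1} - \widetilde{\nu}_{t_1}} \leq \varepsilon/3$.

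The main obstacle I anticipate is the commutativity step: one must cleanly distinguish the position-based right-action of the $\sk$ shuffle from the label-based left-action that defines $\pi$, and check that the censoring scheme, being position-defined, commutes with the relabeling and therefore applies identically on both sides of the identity. A subsidiary technical point is formalizing that the additional small-interval shuffles at segment boundaries accelerate rather than slow mixing, which can be done by combining the canonical coupling of Section~\ref{sec:monotonecoupling} with a Dirichlet-form comparison.
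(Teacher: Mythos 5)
Your overall architecture is the paper's: the censoring during $[0,t_1)$ decouples the dynamics into $K$ independent segment chains, Proposition~\ref{prop:genupperbound} shows each is well mixed by time $t_1$ once $K=K(\varepsilon,\delta)$ is a sufficiently large constant, and the case $t\geq t_1$ is handled because the label-averaging $\nu\mapsto\widetilde{\nu}$ commutes with the position-based (censored) dynamics while total-variation distance contracts under a common kernel. Your explicit left-/right-multiplication argument for that last reduction is correct and is exactly the mechanism the paper inherits from Lacoin; likewise your identification of $\widetilde{\nu}_{t_1}$ with the product of uniform measures on the segments is right. (The submultiplicativity factor you invoke is not needed, since Proposition~\ref{prop:genupperbound} already gives distance $N^{-1}$ at the stated time.)

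The one step that does not hold up as you justify it is the comparison ``the restricted segment chain is at least as fast as the plain $\sk$ shuffle on $\Delta x_i$ cards because the extra $S_j$-moves at the segment boundary can only speed up mixing.'' Lemma~\ref{lem:CensoringSk} cannot deliver this: the paper's censoring never omits an update, it only replaces an update by independent shuffles on sub-intervals, and there is no censoring scheme in that framework which suppresses the boundary $S_j$-moves while leaving the interior length-$k$ updates intact (censoring edges near a segment boundary would split those as well). Your fallback, a Dirichlet-form comparison, compares spectral gaps but cannot transfer the mixing-time bound at the required order: gap-based upper bounds carry a factor $\log(1/\mu_{\min})$ of order $\Delta x_i\log \Delta x_i$, which overshoots $t_1$ by a factor of order $N/K$. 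The detour is also unnecessary: when a length-$k$ interval is split at a censored edge, each boundary block of size $j\in\{2,\dots,k-1\}$ of a segment is refreshed by an independent uniform shuffle at rate exactly $1$, so the middle segment chains are themselves $\sk$ shuffles with boundary rates $\delta^{(k)}_j=1\in[0,1]$ and Proposition~\ref{prop:genupperbound} applies to them directly (the two outer segments carry the rates \eqref{eq:deltaWeights} on their outer side, a mixed case the proposition's proof covers without change); this is precisely how the paper argues. With that substitution your proof coincides with the paper's.
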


The proof of Proposition \ref{prop:upperboundt1} is deferred to Section \ref{sec:FirstProp}.

 \begin{proposition}[Proposition 5.3 in \cite{Lacoin_2016}]\label{prop:upperboundt3}
For all $\e>0$ and $k=o(N^{1/6})$, there exists some $N_1=N_1(\varepsilon)$ such that for all $N \geq N_1$ and $t \geq t_3$
\begin{equation}
\TV{\widehat{\nu}_t - \widehat{\mu}} \leq 2\e/3 \eqpd
\end{equation}
\end{proposition}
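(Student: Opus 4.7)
The approach follows the three-phase strategy of Lacoin in Section~5 of \cite{Lacoin_2016}, adapted to the approximate spectrum available for the $\sk$ shuffle with boundaries when $k\geq 3$. The censoring scheme \eqref{eq:CensoringCutoff} has the crucial property that the skeleton $\bar{\sigma}(m,i)=h_{\sigma}(x_m,x_i)$ is frozen during phases~1 and~3; only during phase~2, namely the interval $[t_1,t_2)$, can the skeleton evolve. The plan is therefore to reduce the proof to two steps: (i) by time $t_2$ the skeleton distribution is $\varepsilon/6$-close in total variation to $\bar{\mu}$, and (ii) during $[t_2,t_3]$ the censoring mixes the within-block arrangements, so that semi-skeleton mixing follows from skeleton mixing.

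For step (i), let $(\eta^{\mathrm{top}}_t)_{t\geq 0}$ and $(\eta^{\mathrm{bot}}_t)_{t\geq 0}$ denote two copies of the censored dynamics started from the maximal and minimal permutations under $\succeq$ and coupled monotonically via Lemma~\ref{lem:CensoringSk} and Remark~\ref{rem:Censor}. Applying Lemma~\ref{lem:upperbexpectedheight} to the heights of each copy, and using that the skeleton is frozen outside $[t_1,t_2)$, yields
\be
\bE\bigl[h_{t_2}^{\mathrm{top}}(x_m,x_i)-h_{t_2}^{\mathrm{bot}}(x_m,x_i)\bigr]\leq 16\min(x_i,N-x_i)e^{-(t_2-t_1)\lambda_{N,k}^{(1)}}+2Ck^3
\ee
for every skeleton pair $(x_m,x_i)$. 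Using the expansion $\lambda_{N,k}^{(1)}=(\pi^2/12)k(k^2-1)N^{-2}(1+o(1))$ from \eqref{eq:eigenvaluebound} together with the definition of $t_2-t_1$, the exponential term above is $o(\sqrt{N})$. Combining the strong Rayleigh property (Corollary~\ref{cor:skstronglyrayleigh}) with Corollary~\ref{cor:varbound} gives $\Var[h'_{t_2}(x_m,x_i)]=O(N)$, so the typical fluctuation scale at each skeleton point is $\sqrt{N}$. The hypothesis $k=o(N^{1/6})$ is exactly what makes $Ck^3=o(\sqrt{N})$, absorbing the approximate-eigenvalue bias into the fluctuations. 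Choosing $K$ a small power of $N$ (e.g.\ $K=\lfloor N^{1/4}\rfloor$) and passing from these marginal estimates to total-variation closeness of the full skeleton distribution then follows the second-moment/Fourier comparison of Lacoin in \cite[Section 5]{Lacoin_2016}, now performed with the approximate eigenfunctions $\Phi^{(j)}_{N,y}$ in place of exact ones.

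For step (ii), during $[t_2,t_3]$ the censoring decouples the dynamics into $K$ independent $\sk$-shuffles on the position blocks $[x_{m-1}+1,x_m]$, each of length at most $2N/K$, while the skeleton itself remains frozen. Since $t_3-t_2=t_1\gg (N/K)^2 k^{-3}\log(N/K)$ for our choice of $K$, Proposition~\ref{prop:genupperbound} applied block by block shows that, conditional on the skeleton at time $t_2$, the within-block configurations reach their conditional uniform distribution up to an additional total-variation error of $o(1)$. Combined with step (i) and the triangle inequality, this yields $\TV{\widehat{\nu}_{t_3}-\widehat{\mu}}\leq 2\varepsilon/3$, as required.

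The main obstacle is step (i), and more specifically the passage from the marginal moment estimates above to a total-variation bound on the entire skeleton. In the $k=2$ case of \cite{Lacoin_2016} the eigenfunctions of the generator are exact and the Fourier comparison is tight; for $k\geq 3$ we must work with approximate eigenfunctions whose defect scales as $Ck^3$, and this defect can only be absorbed into the $\sqrt{N}$ intrinsic fluctuations of the height function when $k=o(N^{1/6})$. This is precisely why the cutoff regime of Theorem~\ref{thm:cutoff} is restricted to this range, and pushing the argument beyond it would seem to require a sharper approximate diagonalization of $\tilde{\mathcal{L}}$.
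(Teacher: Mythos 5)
Your overall strategy is the paper's: skeleton mixing by time $t_2$ driven by the heat-kernel bound of Lemma~\ref{lem:upperbexpectedheight} (with $k=o(N^{1/6})$ entering exactly through $k^3=o(\sqrt{N})$), followed by within-block equilibration on $[t_2,t_3]$ using the censoring scheme \eqref{eq:CensoringCutoff} and Proposition~\ref{prop:genupperbound} applied blockwise, and a final conditioning on the frozen skeleton; this is precisely the paper's route (Proposition~\ref{prop:upperboundt2}, then the conditional decomposition following (5.38)--(5.39) of Lacoin), and your identity $t_3-t_2=t_1$ and the blockwise use of the coupling bound are correct. The gap is at the one place where the real work happens: converting the moment estimates at the skeleton points into $\TV{\bar{\nu}_{t_2}-\bar{\mu}}\leq \varepsilon/3$. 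The paper does \emph{not} use a second-moment/strong Rayleigh argument there. It uses that $\nu_{t_2}$ is increasing for $\succeq$ (Lemma~\ref{lem:CensoringSk} and Remark~\ref{rem:Censor}), so that Lacoin's Lemma~5.5 converts the single first-moment bound $\bE_{\nu_{t_2}}\big[\sum_{i}\sum_{j} h_\sigma(x_i,x_j)\big]\leq\kappa\sqrt{N}$ directly into the skeleton TV bound; no variance input is needed at this step. Your appeal to a top/bottom coupling, Corollary~\ref{cor:varbound}, and ``the second-moment/Fourier comparison of Lacoin'' does not by itself control the total-variation distance of the joint law of the roughly $K^2$ skeleton coordinates; without the monotonicity-plus-Lemma-5.5 mechanism (or an explicit multivariate substitute) the central reduction of step (i) remains asserted rather than proved.

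Separately, taking $K=\lfloor N^{1/4}\rfloor$ is a concrete misstep. The skeleton estimate aggregates over order $K^2$ pairs $(x_i,x_j)$, so the approximate-eigenfunction bias contributes order $K^2 k^3$, which for $K^2=\sqrt{N}$ is no longer $o(\sqrt{N})$ and destroys the comparison against the $\sqrt{N}$ fluctuation scale; moreover the threshold $\kappa=\kappa(\varepsilon,K)$ in Lacoin's skeleton lemma degenerates as $K\to\infty$. A fixed $K=K(\varepsilon,\delta)$, as in the paper, is all that is needed: for your step (ii) one only requires $t_3-t_2=t_1=\frac{\delta}{3}N^2k^{-3}\log N$ to exceed a constant multiple of $(N/K)^2k^{-3}\log(N)$, which already holds for $K$ a sufficiently large constant.
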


The proof of Proposition \ref{prop:upperboundt3} is deferred to Section \ref{sec:SecondProp}.

\begin{proof}[Proof of the upper bound in Theorem \ref{thm:cutoff}]
Note that since the $\sk$ shuffle with boundaries $(\zeta_t)_{t \geq 0}$ is a transitive Markov chain -- see Section 2.6.2 in \cite{LevinPeresWilmer} -- it suffices to bound the distance from the stationary distribution when starting from the identity. Since the Dirac measure on the identity is increasing with respect to the partial order $\succeq$ from \eqref{def:PartialOrder}, using the censoring inequality Lemma~\ref{lem:CensoringSk} for the first step, and Lemma~4.3 in \cite{Lacoin_2016} for the second step, we obtain that
\begin{equation}
 \TV{ \P( \zeta_{t_3} \in \cdot \, | \, \zeta_{0}=\textup{id} ) - \mu } \leq  \TV{ {\nu}_{t_3} - \mu} \leq \TV{ \widehat{\nu}_{t_3} - \widehat{\mu}} + \TV{  \widetilde{\nu}_{t_3} - {\nu}_{t_3}} \eqpd
\end{equation} As $\delta>0$ for $t_3$ was arbitrary, we combine Propositions \ref{prop:upperboundt1} and  \ref{prop:upperboundt3} to conclude.
\end{proof}

\subsection{Proof of Proposition \ref{prop:upperboundt1}}\label{sec:FirstProp}

In the following, we will only describe the necessary changes in the proof of Proposition 5.1 in \cite{Lacoin_2016} in order to obtain Proposition \ref{prop:upperboundt1} for the $\sk$ shuffle with boundaries, and refer to Section 5.3 of \cite{Lacoin_2016} for a detailed proof of the corresponding result for the $S_2$ shuffle. Note that as we start from the Dirac measure on the identity $\delta_{\textup{id}}$, the measure $\widetilde{\delta_{\textup{id}}}$ can be identified with the product measure $\bigotimes_{i=1}^{K} \mu_{\Delta x_i}$ on $ \bigotimes_{i=1}^K\mathcal{S}_{\Delta x_i}$, where we recall \eqref{eq:forceduniformmeasure} and that $\mu_i$ denotes the uniform distribution on $\mathcal{S}_i$. By our choice of the censoring scheme $\mathcal{C}$, note that the measure $\nu_t$ for $t < t_1$ corresponds to the law of $K$ many $\sk$ shuffles on $\bigotimes_{i=1}^K\mathcal{S}_{\Delta x_i}$ with laws $(\nu^{i}_t)_{t \geq 0}$ and suitable boundary parameters $(\delta_i^{(k)})$, satisfying the assumptions of Proposition \ref{prop:genupperbound}, i.e.\ we have $\delta_i^{(k)}\in [0,1]$ for all $\mathcal{S}_{\Delta x_j}$ with $j\in \{ 2,\dots,K-1\}$. Using the canonical coupling from Section \ref{sec:generalupperbound} and Proposition \ref{prop:genupperbound}, we choose $K=K(\delta)$ large enough such that for all $N$ sufficiently large
\begin{equation}
\TV{ \nu_{t} - \widetilde{\nu}_{t_1} }  \leq \sum\limits_{i =1 }^{K} \TV{ \nu_{t}^i - \mu_i } \leq \sum\limits_{i =1 }^{K} \TV{ \nu_{t_1}^i - \mu_i } \leq  \frac{\e}{3} \eqpd
\end{equation}

\subsection{Proof of Proposition \ref{prop:upperboundt3} }\label{sec:SecondProp}

In order to show Proposition \ref{prop:upperboundt3}, we first state a bound on the skeleton projection $\bar{\nu_t}$. 

\begin{proposition}[Proposition 5.2 in \cite{Lacoin_2016}]\label{prop:upperboundt2}
For all $\e>0$ and $k=o(N^{1/6})$, there exists some $N_2=N_2(\varepsilon)$ such that for all $N\geq N_2$ and  $t \geq t_2$ 
\be\label{eq:StatementUpper3}
\TV{  \bar{\nu}_t -\bar{\mu}_t } \leq \e/3 \eqpd
\ee
\end{proposition}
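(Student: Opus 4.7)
My plan is to adapt the proof of Proposition 5.2 in \cite{Lacoin_2016}, substituting Lacoin's exact spectral decomposition with the approximate Fourier analysis from Lemma \ref{lem:upperbexpectedheight} and using Corollary \ref{cor:skstronglyrayleigh} to control the variance of the skeleton via the strong Rayleigh property. Throughout, I fix $K=K(\varepsilon)$ as in the proof of Proposition \ref{prop:upperboundt1}, so $K$ depends only on $\varepsilon$. Since $\nu_0 = \delta_{\textup{id}}$ is the maximal element of $(\mathcal{S}_N, \succeq)$, Lemma \ref{lem:CensoringSk} together with Remark \ref{rem:Censor} ensures that the density $\nu_t/\mu$ is increasing for every $t\geq 0$; in particular, $\bar{\nu}_{t_2}\succeq \bar{\mu}$ coordinate-wise, and the TV distance in \eqref{eq:StatementUpper3} is realized by an increasing event on the skeleton coordinates.

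The core analytic input is a two-moment estimate for each skeleton coordinate $\bar{\sigma}(m,i)=h^{\prime}_t(x_m,x_i)$. Applying Lemma \ref{lem:upperbexpectedheight} gives
\begin{equation*}
\bE_{\nu_{t_2}}[\bar{\sigma}(m,i)] \leq \frac{8N}{K}e^{-t_2\lambda^{(1)}_{N,k}} + Ck^3,
\end{equation*}
and using the asymptotic $\lambda^{(1)}_{N,k} = \frac{k(k^2-1)\pi^2}{12N^2}(1+o(1))$ from \eqref{eq:eigenvaluebound} together with the definition of $t_2$, one verifies that the first term is $o(\sqrt{N})$ for any fixed $\delta>0$. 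The hypothesis $k=o(N^{1/6})$ ensures that $Ck^3 = o(\sqrt{N})$ as well. In contrast, $\bE_\mu[\bar{\sigma}(m,i)] = 0$ by direct computation. For the variance, Corollary \ref{cor:skstronglyrayleigh} shows that the projection onto cards with label $\leq x_i$ is strongly Rayleigh under both $\nu_{t_2}$ and $\mu$, and Corollary \ref{cor:varbound} yields
\begin{equation*}
\Var_{\nu_{t_2}}[\bar{\sigma}(m,i)] \leq \sum_{z=1}^{x_m}\Var\bigl[\mathbf{1}[\sigma_{t_2}(z)\leq x_i]\bigr] \leq N,
\end{equation*}
with the same bound under $\mu$.

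To conclude, I follow the strategy of Section 5.4 of \cite{Lacoin_2016}: use the monotone coupling induced by $\bar{\nu}_{t_2}\succeq \bar{\mu}$ to produce a joint law $(X,Y)$ with $X\sim \bar{\nu}_{t_2}$, $Y\sim \bar{\mu}$, and $X\geq Y$ coordinate-wise; a Chebyshev-type estimate, applied coordinate by coordinate, bounds the discrepancy $X_{m,i}-Y_{m,i}$ in terms of the mean shift $o(\sqrt{N})$ from Step~2 and the equilibrium fluctuation scale $\sqrt{N}$ from Step~3. A union bound over the $K^2$ skeleton coordinates, with $K$ depending only on $\varepsilon$, then yields $\TV{\bar{\nu}_{t_2}-\bar{\mu}}\leq \varepsilon/3$ for all $N$ sufficiently large. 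The main obstacle lies in reconciling the $O(k^3)$ approximation error in Lemma \ref{lem:upperbexpectedheight} with the equilibrium fluctuation scale of the skeleton coordinates, which is $O(\sqrt{N/K})=O(\sqrt{N})$ for fixed $K$; this is precisely what forces the restriction $k=o(N^{1/6})$, matching the hypothesis of Theorem \ref{thm:cutoff}.
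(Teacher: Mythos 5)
Your first-moment inputs do match the paper's: monotonicity of $\nu_t/\mu$ via Lemma \ref{lem:CensoringSk}, and the estimate on $\bE_{\nu_{t_2}}[h_\sigma(x_m,x_i)]$ from Lemma \ref{lem:upperbexpectedheight}, with $k=o(N^{1/6})$ making the $Ck^3$ error $o(\sqrt N)$. Two smaller inaccuracies there: the chain is \emph{censored} on $[0,t_1)$, so Lemma \ref{lem:upperbexpectedheight} cannot be applied over $[0,t_2]$ as you do; the paper applies it to the (uncensored) shuffle restarted from $\nu_{t_1}$, giving the factor $e^{-(t_2-t_1)\lambda^{(1)}_{N,k}}$ rather than $e^{-t_2\lambda^{(1)}_{N,k}}$. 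Also $\min(x_i,N-x_i)$ is of order $N$, not $N/K$. These are repairable. The variance bound via Corollary \ref{cor:skstronglyrayleigh} is not needed for this proposition in the paper's route (it is used for the lower bound); the equilibrium fluctuation control enters elsewhere, as explained next.

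The genuine gap is your final step. The paper does not reprove the passage from the first-moment bound to the TV bound: it invokes Lacoin's Lemma 5.5 of \cite{Lacoin_2016} as a black box, which states exactly that for an \emph{increasing} measure $\nu$, $\bE_\nu[\sum_{i\leq j}h_\sigma(x_i,x_j)]\leq\kappa(\varepsilon,K)\sqrt N$ implies $\TV{\bar\nu-\bar\mu}\leq\varepsilon/3$. Your proposed substitute, a monotone coupling plus coordinatewise Chebyshev plus a union bound, does not establish this implication. Under any coupling, $\TV{\bar\nu_{t_2}-\bar\mu}\leq\P(X\neq Y)$, and since the coordinate differences $X_{m,i}-Y_{m,i}$ are non-negative integers, the union bound only gives $\P(X\neq Y)\leq\sum_{m,i}\bE[X_{m,i}-Y_{m,i}]$, which is useless when the mean shift is of order $\kappa\sqrt N$ (or even merely $o(\sqrt N)$); making it $o(1)$ per coordinate would force a time of roughly twice the cutoff time and lose the sharp constant. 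Conversely, showing that $|X_{m,i}-Y_{m,i}|=o(\sqrt N)$ with high probability while both coordinates fluctuate at scale $\sqrt N$ does \emph{not} imply the two laws are TV-close: two stochastically ordered integer-valued laws can differ by a deterministic shift of $1$ and have disjoint supports (so TV equal to $1$) even though the mean shift is negligible compared with the fluctuations. The missing ingredient — and the actual content of Lacoin's Lemma 5.5 — is a regularity/anti-concentration (local CLT type) estimate for the equilibrium skeleton $\bar\mu$, showing that shifting the skeleton heights by $O(\kappa\sqrt N)$ changes the probability of increasing events by $O_K(\sqrt\kappa)$; combined with Markov's inequality and the monotone coupling this converts the first-moment bound into the TV bound. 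You must either cite that lemma, as the paper does (it concerns only increasing measures on $\mathcal{S}_N$ and their skeletons, so it applies verbatim for $k\geq 3$), or supply such an anti-concentration estimate yourself; Chebyshev and negative dependence cannot replace it.
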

\begin{proof} Let $\nu$ be a probability measure on $\mathcal{S}_N$, which is increasing with respect to the partial order $\succeq$. 
Lemma 5.5 in \cite{Lacoin_2016} states that  \eqref{eq:StatementUpper3} for some $\varepsilon>0$ follows whenever for some sufficiently small $\kappa(\varepsilon,K)$, one can show that
\begin{equation}
\bE_{\nu}\left[\sum\limits_{i=1}^{K-1}\sum\limits_{j=i}^{K-1} h_{\sigma}(x_i,x_j)\right] < \kappa \sqrt{N} \eqcom 
\end{equation} Here $\bE_{\nu}$ denotes the expectation with respect to $\nu$, and we let $\sigma \sim \nu$. Choose $\nu=\nu_{t_2}$ and note that $\nu_{t_2}$ is increasing by Lemma 
~\ref{lem:CensoringSk}. As $k=o(N^{1/6})$, we get from Lemma \ref{lem:upperbexpectedheight} that starting the $\sk$ shuffle with boundaries from $\nu_{t_1}$, for any $\kappa>0$ and all $N$ sufficiently large
\be
\begin{split}
\bE_{\nu}\left[\sum_{i=1}^{K-1}\sum_{j=i}^{K-1} h_{\sigma}(x_i,x_j)\right]&\leq (K-1)^2(2Ne^{-(t_2-t_1)\lambda_{N,k}} +ck^3)) \leq \kappa \sqrt{N} 
\end{split} \eqcom
\ee  allowing us to conclude. 
\end{proof}

It remains to deduce Proposition \ref{prop:upperboundt3} from Proposition \ref{prop:upperboundt2}. As this follows along the same lines as the proof of Proposition 5.3 in \cite{Lacoin_2016}, we will highlight only the required adjustments. 
Let $\sigma_{t} \sim \nu_{t}$ be the configuration of the $\sk$ shuffle at time $t$, and observe that the skeleton $\bar{\sigma}_{t_2}$ remains unchanged for times $t \in [t_2,t_3]$ by our choice of the censoring scheme~$\mathcal{C}$. Conditioning on the event $\{\sigma_{t_2}=\xi \}$ for some $\xi \in \mathcal{S}_N$, let $\mu_{\bar{\xi}}$ be the uniform distribution on set of permutations with skeleton $\bar{\xi}$. From the definition of the semi-skeleton for the first step, and Proposition \ref{prop:genupperbound} together with the same reasoning as in Proposition \ref{prop:upperboundt1} of decomposing the $\sk$ shuffle into $K$ independent $S_{\Delta x_i}$ shuffles for $i\in [K]$ in  the second step, 
\begin{equation}\label{eq:UpperHelper}
    \TV{\hat{\nu}_{t_3}( \, \cdot \, | \bar{\sigma}_{t_2}=\bar{\zeta}) - \hat{\mu}( \, \cdot \, | \bar{\sigma}_{t_2}=\bar{\zeta} ) } \leq \max_{\xi \in \mathcal{S}_N } \TV{ \P( \sigma_{t_3} \in \cdot \, | \, \sigma_{t_2}=\xi ) - \mu_{\bar{\xi}} } \leq \frac{\varepsilon}{3}
\end{equation} for all $\zeta \in \mathcal{S}_N$ and $N$ sufficiently large; see also equation (5.38) in \cite{Lacoin_2016}. Since we have that
\begin{equation}
    2 \TV{\hat{\nu}_{t_3}- \hat{\mu} } \leq 2 \left( \TV{  \bar{\nu}_t -\bar{\mu}_t } + \sum_{\bar{\zeta} \in \bar{\mathcal{S}}_N } \bar{\nu}_{t_3}(\bar{\zeta}) \TV{\hat{\nu}_{t_3}( \, \cdot \, | \bar{\sigma}_{t_2}=\bar{\zeta}) - \hat{\mu}( \, \cdot \, | \bar{\sigma}_{t_2}=\bar{\zeta} ) } \right) 
\end{equation} by equation (5.39) in \cite{Lacoin_2016}, we get Proposition \ref{prop:upperboundt3} by  combining Proposition \ref{prop:upperboundt2} and \eqref{eq:UpperHelper}.

\section{Comparison between the $\sk$ shuffle with and without boundaries}\label{sec:Comparison}

In Theorem \ref{thm:Precutoff}, we saw for $k=o(N^{2/3})$ that the $\sk$ shuffle exhibits \textbf{pre-cutoff}, i.e.\ the ratio between the $\varepsilon$-mixing time and $N^{2}k^{-3}\log(N)$ is bounded for $N \rightarrow \infty$ from below and above by positive constants, which do not depend on $\varepsilon \in (0,1)$. While the $\sk$ shuffle with boundaries also exhibits pre-cutoff when  $k=o(N^{2/3})$, we argue that for $k=N^{\delta}$ with $\delta \in (\frac{2}{3},\frac{3}{4})$ the behavior of the $\sk$ shuffle and the $\sk$ shuffle with boundaries is fundamentally different due to a different treatment of the cards near the boundaries. 

\begin{proposition}\label{pro:Comparison}
For all $k=o(N^{3/4})$ and $\varepsilon \in (0,1)$, the mixing time $\tmix^{\prime}(\varepsilon)$ of the $\sk$ shuffle with boundaries satisfies
\begin{equation}\label{eq:LargeKWith}
\frac{6}{\pi^2} \leq \liminf_{N \rightarrow \infty} \frac{k(k^2-1)\cdot \tmix^{\prime}(\varepsilon)}{N^{2}\log(N)} \leq \limsup_{N \rightarrow \infty} \frac{k(k^2-1)\cdot \tmix^{\prime}(\varepsilon)}{N^{2}\log(N)}\leq c
\end{equation} for some constant $c>0$. In particular, pre-cutoff occurs. 
For $k=N^{\delta}$ with $\delta \in (\frac{2}{3},1)$, the mixing time $\tmix(\varepsilon)$ of the $S_k$ shuffle is of constant order where 
 \begin{equation}\label{eq:LargeKWithout}
     \lim_{\varepsilon \rightarrow 0} \liminf_{N \rightarrow \infty }\tmix(\varepsilon) = \infty \eqcom
 \end{equation} and for any fixed $\varepsilon>0$, there exist some $C=C(\varepsilon)>0$ such that \begin{equation}\label{eq:UpperSlow}
 \limsup_{N \rightarrow \infty } \tmix(\varepsilon) \leq C \eqpd
 \end{equation} In particular, pre-cutoff does not occur. 
\end{proposition}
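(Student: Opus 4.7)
The proof splits into two independent parts. For the pair of bounds in \eqref{eq:LargeKWith}, my plan is to combine earlier results: the lower bound $6/\pi^2$ is immediate from Remark \ref{rem:lowerBound}, which already states that the lower bounds of Theorems \ref{thm:Precutoff} and \ref{thm:cutoff} hold for all $k = o(N^{3/4})$. For the matching upper bound $\tmixb(\varepsilon) \leq C N^2 k^{-3} \log N$, I would apply the second part of Proposition \ref{prop:genupperbound}, which handles the boundary-rate choice $(\delta_i^{(k)})$ from \eqref{eq:deltaWeights} for the full range $k = o(N)$. Together these yield \eqref{eq:LargeKWith} and in particular the pre-cutoff assertion.

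For the lower bound \eqref{eq:LargeKWithout} on the $\sk$ shuffle without boundaries, I would track the card of label $1$ started from $\eta_0 = \textup{id}$. Under the generator $\mathcal{L}$ in \eqref{eq:skgenwithoutboundary}, position $1$ sits only in the interval $[1,k]$, whose Poisson clock rings at rate $1$, so $\P_{\textup{id}}(\eta_t(1) = 1) \geq e^{-t}$, while $\mu_N(\{\sigma : \sigma(1) = 1\}) = 1/N$. Testing the set $A = \{\sigma : \sigma(1) = 1\}$ in \eqref{def:TVDistance} gives $\TV{\P_{\textup{id}}(\eta_t \in \cdot) - \mu_N} \geq e^{-t} - N^{-1}$, from which $\tmix(\varepsilon) \geq \log(1/(\varepsilon + N^{-1}))$. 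Sending $N \to \infty$ and then $\varepsilon \to 0$ yields \eqref{eq:LargeKWithout}.

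The main obstacle will be the upper bound \eqref{eq:UpperSlow}, because Proposition \ref{prop:genupperbound} with $\delta_i^{(k)} = 0$ only applies in the range $k = o(N^{2/3})$, whereas we need $\delta \in (2/3, 1)$. My strategy is to rerun the canonical coupling of Section \ref{sec:monotonecoupling} in this high-$k$ regime, exploiting two facts. First, the approximate spectral gap $\lambda_{N,k}^{(1)} \sim k^3 N^{-2}$ now tends to infinity, so the martingale argument behind Lemma \ref{lem:MartingaleInterior} can be rerun to show that a single card reaches the bulk positions $[2k, N-2k]$ and coalesces with its coupled partner in constant time with positive probability uniformly in $N$. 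Second, each of the two boundary intervals $[1,k]$ and $[N-k+1,N]$ fires at rate one, so by time $C_1 \log(1/\varepsilon)$ both have fired $\Omega(\log(1/\varepsilon))$ times with probability at least $1 - \varepsilon/4$, and the canonical coupling aligns the labels common to both configurations' boundary blocks at each firing.

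The delicate point is to avoid the $\log N$ factor that would come from a naive union bound over all $N$ labels. Only $O(k) = O(N^\delta)$ cards are ever located in the boundary region, the remaining $N - O(k)$ interior cards coalesce in time $o(1)$ by the bulk estimate, and the boundary cards are re-mixed together at each of the $\Omega(\log(1/\varepsilon))$ firings of the boundary clocks. Performing the total-variation computation conditional on these firings, rather than using a card-by-card union bound, should give a boundary contribution of at most $\varepsilon/2$ and an interior contribution of $o(1)$, yielding $\tmix(\varepsilon) \leq C(\varepsilon)$ uniformly in $N$, which is \eqref{eq:UpperSlow}.
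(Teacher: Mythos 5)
Your treatment of \eqref{eq:LargeKWith} and \eqref{eq:LargeKWithout} is correct and essentially identical to the paper's: the first follows by combining Remark \ref{rem:lowerBound} with the second part of Proposition \ref{prop:genupperbound}, and the second from the observation that the card at position $1$ (or $N$) has not moved by time $t$ with probability at least $e^{-t}$, giving $\tmix(\varepsilon)\geq \log(1/(\varepsilon+N^{-1}))$.

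The gap is in \eqref{eq:UpperSlow}. Two of your key steps do not hold up. First, the claim that ``only $O(k)$ cards are ever located in the boundary region'' is false over a constant time horizon: since the bulk traversal time $N^2k^{-3}=N^{2-3\delta}$ tends to $0$, essentially every one of the $N$ cards can visit the boundary region within time $O(1)$, so the deck cannot be split once and for all into $O(k)$ ``boundary cards'' and $N-O(k)$ ``interior cards''. Second, and more importantly, your final step --- ``performing the total-variation computation conditional on these firings'' so as to avoid a card-by-card union bound --- is not an argument: the coupling bound on $\TV{\cdot}$ requires all $N$ labels to have coalesced, and aligning only the labels common to the two boundary blocks at a firing does nothing for the labels on which the two configurations disagree; no mechanism for controlling the joint failure probability is given. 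The reason you felt forced into this is a misdiagnosis: the union bound over all $N$ labels does not cost a $\log N$ factor here, because the per-card failure probability can be made polynomially small within $o(1)$ extra time. This is exactly what the paper's Lemma \ref{lem:CoalescenceCompare} provides: after a firing of a boundary block, a card fails to clear the threshold $N^{1/3+\delta^{\prime}}$ only with probability $N^{1/3+\delta^{\prime}-\delta}$ (note that one firing places the card uniformly in $[1,k]$, so your bulk threshold $2k$ cannot even be reached in a single firing, whereas $N^{1/3+\delta^{\prime}}\ll k$ can); once both coupled copies sit above this threshold they remain there for a window of length $N^{-\delta^{\prime}}$ with probability $1-N^{-3\delta^{\prime}}$, and within that $o(1)$ window the constant-probability coalescence step can be iterated $N^{\delta^{\prime}}$ times to reach coalescence probability $1-N^{-2}$ per card. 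A bounded number $n=4/\delta^{\prime}$ of boundary firings, all occurring by time $C(\varepsilon)$ with probability $1-\varepsilon/4$, then handles every card simultaneously after a union bound of total cost $O(N^{-1})$. With only ``coalescence in constant time with positive probability'' per card, as in your sketch, no such boost is available, so your plan as written does not yield \eqref{eq:UpperSlow}.
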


In order to show Proposition \ref{pro:Comparison}, we require some setup. Set $\delta^{\prime}=\frac{1}{24}(3\delta-2)>0$, and recall from Section \ref{sec:generalupperbound} that we denote by $(Z_{i,t})_{t \geq 0}$ and $(Z^{\prime}_{i,t})_{t \geq 0}$ the position of the cards of label~$i$ in two $\sk$ shuffles under the canonical coupling $\mathbf{P}$. For $i\in [N]$ and $T \geq 0$, we define
\begin{equation}
B_{i,T} := \left\{ Z_{i,t} \in \big[N^{\frac{1}{3}+\delta^{\prime}},N-N^{\frac{1}{3}+\delta^{\prime}}\big] \text{ for all } t \in \big[T,T+N^{-\delta^{\prime}}\big]  \right\}  \eqcom
\end{equation} and similarly define $B_{i,T}^{\prime}$ with respect to $(Z^{\prime}_{i,t})_{t \geq 0}$.
We have the following result on the coalescence time $\tau_i$ of the cards of label $i \in [N]$ under the canonical coupling $\mathbf{P}$.
\begin{lemma}\label{lem:CoalescenceCompare} Let $\delta > \frac{2}{3}$ and  $i\in [N]$. Then for all $T\geq 0$ and $N$ sufficiently large
\begin{equation}\label{eq:AssumptionBulk}
\mathbf{P}\big(B_{i,T}\, \big|\, Z_{i,T} \in \big[N^{\frac{1}{3}+\delta^{\prime}},N-N^{\frac{1}{3}+\delta^{\prime}}\big] \big) \geq 1- N^{-3\delta^{\prime}} \eqcom
\end{equation}  and similarly for the events $B_{i,T}^{\prime}$. Moreover, we have that for all $N$ sufficiently large
\begin{equation}\label{eq:QuickMix}
 \mathbf{P}\big( \tau_{i}> T+N^{-\delta^{\prime}} \,  \big| \, B_{i,T} \cap  B_{i,T}^{\prime}\big) \leq N^{-2} \eqpd
\end{equation} 
\end{lemma}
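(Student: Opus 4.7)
The plan is to establish the two estimates separately. For \eqref{eq:AssumptionBulk}, I would bound the expected number of transitions that carry $Z_{i,\cdot}$ out of the bulk during $[T, T+N^{-\delta'}]$ and invoke Markov's inequality. An update of interval $[j, j+k-1]$ sends a card at position $x\in[j,j+k-1]$ to a uniform new position in that interval, so a transition from some $x \geq N^{1/3+\delta'}$ to $y < N^{1/3+\delta'}$ requires $j\leq N^{1/3+\delta'}-1$ and the new position to fall in $[j,N^{1/3+\delta'}-1]$. Summing $1/k$ times the length $N^{1/3+\delta'}-j$ over the relevant $j$ yields an exit rate at any position in the bulk bounded by $O(N^{2(1/3+\delta')}/k) = O(N^{2/3+2\delta'-\delta})$, with the analogous bound for exits on the right. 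Integrating this uniform bound over the interval of length $N^{-\delta'}$ gives an expected number of exits $O(N^{2/3+\delta'-\delta})$; this is $\leq N^{-3\delta'}$ iff $\delta \geq 2/3+4\delta'$, which by the choice $\delta'=(3\delta-2)/24$ reduces to $\delta\geq 2/3$. Markov's inequality then delivers \eqref{eq:AssumptionBulk}. Note that positions deeper than $k$ inside the bulk cannot exit at all in a single step, so only the band within $k$ of either edge contributes; the uniform exit-rate bound above is unaffected by this.

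For \eqref{eq:QuickMix}, I would leverage a strengthened version of the tail bound on the canonical coupling time implicit in Proposition \ref{prop:genupperbound}. Its proof gives $\mathbf{P}(\tau_i > c_1 N^2 k^{-3}) \leq c_2$ uniformly over the starting configurations, and iterating the Markov property $m$ times yields $\mathbf{P}(\tau_i > m c_1 N^2 k^{-3}) \leq c_2^m$; choosing $m = \lceil M\log N/\log(1/c_2)\rceil$ then produces $\mathbf{P}(\tau_i > C_M N^2 k^{-3}\log N)\leq N^{-M}$ for any fixed $M>0$. With $k=N^\delta$ and $\delta > 2/3$, the time scale $N^2 k^{-3}\log N = N^{2-3\delta}\log N$ is $o(N^{-\delta'})$ because $\delta > (2+\delta')/3$ automatically when $\delta'=(3\delta-2)/24$ and $\delta > 2/3$. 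Applying the strong Markov property at time $T$ gives $\mathbf{P}(\tau_i > T+N^{-\delta'}\mid \mathcal{F}_T)\leq N^{-M}$ almost surely for $N$ sufficiently large, and intersecting with $B_{i,T}\cap B_{i,T}'$ together with Bayes' formula yields $\mathbf{P}(\tau_i > T+N^{-\delta'} \mid B_{i,T}\cap B_{i,T}')\leq N^{-M}/\mathbf{P}(B_{i,T}\cap B_{i,T}')$.

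The main obstacle is justifying this last step: the conditioning event $B_{i,T}\cap B_{i,T}'$ has a probability that depends on the starting configurations and could a priori be small. However, since the iterated tail bound from Proposition \ref{prop:genupperbound} can be pushed to arbitrary polynomial order at essentially no cost in the time scale, it suffices to take $M$ large enough to dominate any polynomial lower bound on $\mathbf{P}(B_{i,T}\cap B_{i,T}')$. In the regime where the lemma is applied in the proof of Proposition \ref{pro:Comparison}, a non-negligible fraction of cards lie in the bulk at time $T$ in both shuffles, so part 1 combined with this context provides such a lower bound, and the polynomial strength of the coupling estimate then absorbs the denominator to give the claimed $N^{-2}$.
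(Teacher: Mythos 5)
Your argument for \eqref{eq:AssumptionBulk} is fine and essentially the paper's: the paper bounds the Poisson number of updates of intervals $[j,j+k-1]$ with $j<N^{\frac13+\delta'}$ in the window and the per-update probability $N^{\frac13+\delta'-\delta}$ of landing low, while you bound the expected number of exits directly and apply Markov; both give exponent $2/3+\delta'-\delta\le -3\delta'$ thanks to $\delta'=(3\delta-2)/24$, so this half is correct.

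The second half has a genuine gap: you invoke the coupling tail bound of Proposition~\ref{prop:genupperbound}, $\mathbf{P}(\tau_i>c_1N^2k^{-3})\le c_2$ uniformly in the starting pair, but that proposition requires $k=o(N^{2/3})$ when the boundary rates are merely in $[0,1]$ (in particular for the plain $\sk$ shuffle, which is exactly the process Lemma~\ref{lem:CoalescenceCompare} is used for in the regime $k=N^{\delta}$, $\delta>\frac23$). In that regime the uniform bound is simply false: if one copy has card $i$ at position $1$, that card moves only when the single interval $[1,k]$ rings, an event of probability about $c_1N^{2-3\delta}\to 0$ on the time scale $c_1N^2k^{-3}$, so $\mathbf{P}(\tau_i>c_1N^2k^{-3})\to 1$ for such starting pairs. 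Consequently the iterated, super-polynomial tail bound you build on it, and the Bayes-formula step dividing by $\mathbf{P}(B_{i,T}\cap B_{i,T}')$ (a quantity the lemma does not control anyway), cannot be justified; the whole point of this comparison lemma is that the boundary must be avoided, not averaged over. The paper's route is to exploit the conditioning on $B_{i,T}\cap B_{i,T}'$ throughout the window: on this event both cards stay above $N^{\frac13+\delta'}$, the estimate \eqref{eq:BoundaryTime} from the proof of Lemma~\ref{lem:MartingaleInterior} shows the expected time to clear the slow region from such a position is $O(N^{-\frac13+2\delta'})$, and since $\max\bigl(N^{-\frac13+\delta'}N/k,\,N^2/k^3\bigr)\le N^{-2\delta'}$, the coalescence-with-positive-probability argument of Proposition~\ref{prop:genupperbound} runs on the time scale $c_1N^{-2\delta'}$; iterating it $N^{\delta'}$ times inside the window of length $N^{-\delta'}$ gives failure probability at most $(1-c_2)^{N^{\delta'}}\le N^{-2}$. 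Your proposal is missing this restriction-to-the-bulk mechanism, which is the essential content of \eqref{eq:QuickMix}.
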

\begin{proof} For the first statement \eqref{eq:AssumptionBulk}, note that $Z_{i,t}<N^{\frac{1}{3}+\delta^{\prime}}$ for some $t \geq T$ can only occur by an update of an interval $[j,j+k-1]$ for some $j<N^{\frac{1}{3}+\delta^{\prime}}$. Let $X_T$ be the total number of updates of these intervals between time $T$ and $T+N^{-\delta^{\prime}}$, and note that each such update places the card of label $i$ in the first $N^{\frac{1}{3}+\delta^{\prime}}$ positions independently with probability at most~$N^{\frac{1}{3}+\delta^{\prime}-\delta}$. As $X_T$ is Poisson-$(N^{1/3})$-distributed, we have that $\mathbf{P}(X_T \geq N^{\frac{1}{3}+\delta^{\prime}})\leq \frac{1}{4}N^{-3\delta^{\prime}}$ uniformly in $T>0$, and for all $N$ large enough. Hence
\begin{align*}
\mathbf{P}\big( Z_{i,T} > N^{\frac{1}{3}+\delta^{\prime}} &\text{ for all } t\in [T,T+N^{-\delta^{\prime}}] \, \big| \, Z_{i,T} \in \big[N^{\frac{1}{3}+\delta^{\prime}},N-N^{\frac{1}{3}+\delta^{\prime}} \big]\big) \\ &\geq \mathbf{P}(X_T \geq N^{\frac{1}{3}+\delta^{\prime}}) +  (1-N^{\frac{1}{3}+\delta^{\prime}-\delta})^{N^{\frac{1}{3}+\delta^{\prime}}} \geq 1 - \frac{1}{2}N^{-3\delta^{\prime}}
\end{align*}
for all $N$ sufficiently large. A similar statement for $Z_{i,t} <  N-N^{\frac{1}{3}+\delta^{\prime}}$ gives the desired lower bound on the probability of $B_{i,T}$. For the second statement \eqref{eq:QuickMix}, we recall \eqref{eq:BoundaryTime} in the proof of Lemma \ref{lem:MartingaleInterior} which implies that for any starting position $>N^{\frac{1}{3}+\delta^{\prime}}$, the expected time to reach some position $>k$ is of order at most $N^{-\frac{1}{3}+2\delta^{\prime}}$. Using  the same arguments as in the proof of Proposition \ref{prop:genupperbound}, together with the fact that 
\begin{equation}
\max\left( N^{-\frac{1}{3}+\delta^{\prime}} \frac{N}{k}, \frac{N^2}{k^3}\right) \leq N^{-2\delta^{\prime}} 
\end{equation}
for all $N$ large enough, we see that there exist constants $c_1,c_2>0$ such that 
\begin{equation}\label{eq:IterationCompare}
\mathbf{P}\big( \tau_{i}> T+c_1N^{-2\delta^{\prime}} \,  \big| \,  B_{i,T}\cap B_{i,T}^{\prime}\big) \leq 1- c_2 \eqpd
\end{equation} Iterating   \eqref{eq:IterationCompare} now $N^{\delta^{\prime}}$ many times gives the desired result.
\end{proof}

\begin{proof}[Proof of Proposition \ref{pro:Comparison}] Note that we obtain \eqref{eq:LargeKWith} by combining Remark \ref{rem:lowerBound} and Proposition \ref{prop:genupperbound}. The lower bound in \eqref{eq:LargeKWithout} on the mixing time of the $S_k$ shuffle follows from observing that the $\varepsilon$-mixing time is bounded from below by the time $T$ it takes such that at least one of the  cards initially at positions $1$ or $N$ has moved with probability at least $1-\varepsilon$ until time $T$. Hence, it remains to prove  \eqref{eq:UpperSlow}. Using \eqref{eq:QuickMix} in Lemma \ref{lem:CoalescenceCompare}, it suffices to show that with probability at least $1-\varepsilon/2$, there exists some $n\in \N$, some constant $C=C(\varepsilon,n)>0$, and a sequence of non-negative times $(T_{1},T_2,\dots,T_{n})$ such that 
\begin{equation}\label{eq:RequiredT}
T_{j+1}-T_{j} > N^{-\delta^{\prime}} \text{ for all } j\in [n-1] \quad \text{ and } \quad T_{n} \leq C \eqcom
\end{equation}
and with the property that for every $i\in [N]$, at least one of events $B_{i,T_j}$ occurs for some $j=j(i)\in [n]$. In order to define these times $(T_j)_{j \in [n]}$, consider for all $m\in \N$ the event $D_m$ that during the time interval $[2m-1,2m]$, both intervals $[k]$ and $\{N-k+1,\dots,N\}$ receive an update. Let $(\mathcal{F}_t)_{t \geq 0}$ denote the natural filtration under the coupling $\mathbf{P}$. Then 
\begin{equation}\label{eq:Filtration}
    \mathbf{P}(D_m \, | \, \mathcal{F}_{2m-1} )\geq \frac{1}{4} \quad \text{ and } \quad \mathbf{P}( B_{i,2m} \cap B_{i,2m}^{\prime} \, | \, D_m, \mathcal{F}_{2m-1} )\geq 1-2N^{-\delta^{\prime}}
\end{equation}
 for all $m\in \N$, uniformly in $i\in [N]$, iterating \eqref{eq:AssumptionBulk} of Lemma \ref{lem:CoalescenceCompare} for the second statement. 
Let $T_j$ be the $j^{\text{th}}$ time that the event $D_m$ occurs. Choosing $C>0$ sufficiently large, we see that for any fixed $n\in \N$, the times $(T_i)_{i\in [n]}$ satisfy \eqref{eq:RequiredT} with probability at least $1-\varepsilon/4$. To ensure that with probability at least $1-\varepsilon/4$, for every $i\in [N]$ at least one of events $B_{i,T_j}$ occurs for some $j\in [n]$, we set $n=4/\delta^{\prime}$. 
From \eqref{eq:Filtration}, we see that for every fixed $i\in [N]$, with probability at least $1-N^{-2}$, the event $B_{i,T_j} \cap B_{i,T_j}^{\prime}$ holds for some $j\in [n]$, and all $N$ large enough. Together with a union bound over all $i\in [N]$, this finishes the proof.
\end{proof}

We conclude with a conjecture on the mixing time of the $\sk$ shuffle when $k=o(N^{1/2})$. 
\begin{conjecture}\label{conj:Open}  Let $k=o(N^{1/2})$. Then for all $\varepsilon \in (0,1)$, we have that
\begin{equation}
 \lim_{N \rightarrow \infty} \frac{k(k^2-1)\cdot \tmix(\varepsilon)}{N^{2}}  = \lim_{N \rightarrow \infty} \frac{k(k^2-1)\cdot \tmix^{\prime}(\varepsilon)}{N^{2}} = \frac{6}{\pi^2}  \eqcom
\end{equation} i.e.\ the $\sk$ shuffle with and without boundary exhibits cutoff.
\end{conjecture}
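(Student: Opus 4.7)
The plan is to extend Theorem~\ref{thm:cutoff} from the current range $k = o(N^{1/6})$ up to $k = o(N^{1/2})$ for the boundary version, and then transfer the upper bound to the boundary-free version via a censoring and coupling argument. The matching lower bounds are already available: by Remark~\ref{rem:lowerBound}, the second-moment analysis of Section~\ref{sec:lowerbounds} furnishes the bound $\tmix(\varepsilon),\,\tmixb(\varepsilon) \geq (1+o(1))\cdot \frac{6}{\pi^2}\cdot \frac{N^{2}\log(N)}{k(k^2-1)}$ throughout $k = o(N^{3/4})$, so no new work is needed on the lower side.

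For the upper bound with boundaries, the obstruction to extending Theorem~\ref{thm:cutoff} is Lemma~\ref{lem:upperbexpectedheight}, whose additive error $Ck^3$ in the expected height estimate is what forces $k^3 = o(\sqrt{N})$ in the proof of Proposition~\ref{prop:upperboundt2}. This error is inherited from the per-mode defect $Ck^6 j^3/N^3$ in Lemma~\ref{lem:ApproxEigen}. My plan is to construct refined approximate eigenfunctions $\tilde{\psi}_j(x) = \sin(xj\pi/N) + \sum_{m=1}^{M}(k/N)^m \varphi^{(m)}_j(x)$, with each correction $\varphi^{(m)}_j$ supported in a boundary layer of width $O(k)$ and chosen inductively so that the local identity \eqref{eq:Ingrid1} at the boundary holds to one additional power of $k/N$ at each step. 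At each order $m$ the constraint amounts to a linear recurrence on a boundary strip, whose consistency is guaranteed by a suitable refinement of the rate formula \eqref{eq:deltaWeights}. A reduction of the per-mode defect by a factor $(k/N)^{\alpha}$ with $\alpha > 2$ would bring the additive error in Lemma~\ref{lem:upperbexpectedheight} below $o(\sqrt{N})$ for $k = o(N^{1/2})$, and the rest of Lacoin's skeleton argument in Section~\ref{sec:lacoinupperbound} goes through verbatim.

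For the boundary-free version, I would transfer the with-boundaries cutoff using the generalized censoring of Section~\ref{subsec:censoring} together with the exit estimates from Section~\ref{sec:generalupperbound}. Concretely, view the $\sk$ shuffle as the $\sk$ shuffle with boundaries in which the boundary moves are censored on an initial short interval $[0, t_{\star}]$. By Lemmas~\ref{lem:ExitBoundaryDeltaZero} and \ref{lem:MartingaleInterior} each card leaves the $4k$-neighbourhood of the boundary within expected time at most $O(\sqrt{k})$ uniformly in $\delta^{(k)}_i \in [0,1]$; since $k = o(N^{1/2})$, choosing $t_\star = o(N^2\log(N)/k^3)$ much larger than $\sqrt{k}$ is easy, and a union bound over the $N$ cards (using the negative-dependence consequence of Corollary~\ref{cor:skstronglyrayleigh}) ensures that the total-variation cost of the censoring is $o(1)$. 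Applying the with-boundaries cutoff at time $t_3 + t_\star = (1+o(1))\frac{6}{\pi^2}\cdot\frac{N^2\log(N)}{k(k^2-1)}$ then yields the conjectured profile for $\tmix(\varepsilon)$.

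The principal obstacle is the rigorous construction and error control for the boundary-layer corrections $\varphi^{(m)}_j$. When $k \in \{2,3\}$ one can diagonalise the underlying sparse matrix exactly, but for $k \geq 4$ the characteristic polynomial has no closed form, which is precisely why the authors resorted to approximate eigenfunctions in the first place. One must in particular handle the interaction between the left and right boundary layers inside the global normalisation, and make sure that the inductive construction does not blow up at interior frequencies $j$ near $N/k$ where the expansion of $\lambda^{(j)}_{N,k}$ from \eqref{eq:eigenvaluebound} breaks down. An alternative attack would be to seek boundary rates $\delta^{(k)}_i$ different from \eqref{eq:deltaWeights} for which $\sin(xj\pi/N)$ is an \emph{exact} eigenfunction of $\tilde{\mathcal{L}}$ for a larger collection of $j$; this is a consistency question for a linear system in the $\delta^{(k)}_i$ whose solvability for $k \geq 4$ is itself an interesting algebraic problem.
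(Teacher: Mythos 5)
First, note what you are proving against: Conjecture~\ref{conj:Open} is posed in the paper as an open problem, so there is no proof in the paper to compare with, and your submission is a research programme rather than a proof. Its load-bearing step is precisely the part you leave open: the boundary-layer-corrected approximate eigenfunctions $\tilde\psi_j$ with per-mode defect improved by a factor $(k/N)^{\alpha}$, $\alpha>2$, are only asserted to exist. You give no construction of the corrections $\varphi^{(m)}_j$, no solvability statement for the modified rates replacing \eqref{eq:deltaWeights}, and no control at high frequencies $j\sim N/k$ where the expansion in \eqref{eq:eigenvaluebound} degenerates --- and you correctly identify this as the principal obstacle. Since the $Ck^3$ error in Lemma~\ref{lem:upperbexpectedheight}, inherited from Lemma~\ref{lem:ApproxEigen} and forcing $k^3=o(\sqrt N)$ in Proposition~\ref{prop:upperboundt2}, is exactly the barrier the conjecture asks to overcome, this portion of your plan restates the difficulty rather than resolving it. (You should also reconcile the normalization: the conjecture as printed divides by $N^2$, whereas your target time scale, like Theorem~\ref{thm:cutoff} and the lower bound of Remark~\ref{rem:lowerBound}, carries the factor $\log N$.)

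Second, the transfer to the boundary-free shuffle does not work as described. The $\sk$ shuffle without boundaries is not the with-boundaries shuffle ``with boundary moves censored on $[0,t_\star]$'': once you uncensor, you are running the with-boundaries dynamics again, and the censoring formalism of Section~\ref{subsec:censoring} splits blocks at censored edges rather than deleting the extra boundary updates, so even censoring for all times does not reproduce the generator \eqref{eq:skgenwithoutboundary} exactly. More importantly, the censoring inequality points the wrong way for your purpose: for increasing initial densities the censored chain is stochastically higher and farther from equilibrium, so it transfers \emph{lower} bounds from the with-boundaries chain to the (censored, boundary-free-like) chain --- which is exactly how the paper uses it in Section~\ref{sec:lowerbounds} --- and cannot deliver the \emph{upper} bound on $\tmix$ you need. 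Controlling the boundary layer of the boundary-free chain requires a genuinely new argument; Proposition~\ref{pro:Comparison} in Section~\ref{sec:Comparison} shows that for large $k$ the two chains have fundamentally different mixing behaviour precisely because of the boundary, so a union bound over cards leaving the boundary once, combined with the with-boundaries cutoff, does not suffice.
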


\bibliographystyle{plain}
\bibliography{bib}

\subsection*{Acknowledgment}
This project was initiated at the SAMSI Virtual Workshop in Combinatorial probability, which was supported by NSF Grant DMS-1929298.
EN was supported by NSF Grant DMS-2052659. AP was supported by the University of Texas at Austin Dean's Strategic Fellowship. DS acknowledges the DAAD PRIME program for financial support. We thank Jonathan Hermon for helpful comments.
\end{document}